
\documentclass[epsfig,amsfonts]{amsart}
\usepackage{color}
\usepackage{epsfig}
\usepackage{amsmath}
\usepackage{amssymb}
\usepackage{amscd}
\usepackage{graphicx}

\setlength{\topmargin}{0.13in} \setlength{\textheight}{8.85in}
\setlength{\textwidth}{6.6in} \setlength{\oddsidemargin}{-0.2in}
\setlength{\evensidemargin}{-0.2in} \setlength{\unitlength}{1cm}

\newcommand{\EE}{\ensuremath{\mathbb{E}}}

\newcommand{\NN}{\ensuremath{\mathbb{N}}}

\newcommand{\PP}{\ensuremath{\mathbb{P}}}

\newcommand{\RR}{\ensuremath{\mathbb{R}}}


\newcommand{\eps}{\ensuremath{\epsilon}}


\newcommand{\bB}{\ensuremath{\mathcal{B}}}

\newcommand{\dD}{\ensuremath{\mathcal{D}}}

\newcommand{\oO}{\ensuremath{\mathcal{O}}}

\newcommand{\tT}{\ensuremath{\mathcal{T}}}

   \newtheorem{lemma}{Lemma}[section]
   \newtheorem{theorem}[lemma]{Theorem}
   \newtheorem{remark}[lemma]{Remark}

   \newtheorem{coro}[lemma]{Corollary}
   \newtheorem{definition}[lemma]{Definition}

\numberwithin{equation}{section}

\renewcommand{\phi}{\varphi}

\begin{document}

\author{Mar\'{\i}a J. Garrido-Atienza}
\address[Mar\'{\i}a J. Garrido-Atienza]{Dpto. Ecuaciones Diferenciales y An\'alisis Num\'erico\\
Universidad de Sevilla, Apdo. de Correos 1160, 41080-Sevilla,
Spain} \email[Mar\'{\i}a J. Garrido-Atienza]{mgarrido@us.es}

\author{Kening Lu}
\address[Kening Lu]{346 TMCB\\
Brigham Young University, Provo, UT 84602, USA} \email[Kening
Lu]{klu@math.byu.edu}

\author{Bj{\"o}rn Schmalfu{\ss }}
\address[Bj{\"o}rn Schmalfu{\ss }]{Institut f\"{u}r Stochastik\\
Friedrich Schiller Universit{\"a}t Jena, Ernst Abbe Platz 2, 77043\\
Jena,
Germany\\
 }
\email[Bj{\"o}rn Schmalfu{\ss }]{bjoern.schmalfuss@uni-jena.de}

\subjclass[2000]{Primary: 60H15; Secondary: 60H05, 60G22, 26A33, 26A42.}
\keywords{Stochastic PDEs, fractional Brownian motion, pathwise solutions, rough path theory. \\
This work was partially supported by MTM2011-22411, FEDER founding (M.J.
Garrido-Atienza and B. Schmalfu{\ss}), and by NSF0909400 (K. Lu). }

\title[Pathwise solutions to stochastic partial differential equations]
{Pathwise solutions to stochastic partial differential equations driven by fractional Brownian motions with Hurst parameters in $(1/3,1/2]$}

\begin{abstract}
Combining fractional calculus and the Rough Path Theory we study the existence and uniqueness of mild solutions to evolutions equations driven by a H\"older continuous function with H\"older exponent in $(1/3,1/2)$. Our stochastic integral is in some sense a generalization of the well-known Young integral and can be defined independently of the initial condition. Similar to the Rough Path Theory we establish a second variable which is given, roughly speaking, by a tensor product. It is then necessary to formulate a second equation for this new variable, and we do in a mild sense. The crucial point in order to get this new equation is to construct a tensor depending on the noise path but also on the semigroup. We then prove the existence of a unique H{\"o}lder continuous solution of the system of equations, consisting of the path and the area components, if the nonlinear term and the initial condition are sufficiently smooth. Once the abstract theory is developed, we can present a pathwise nonlinear SPDE driven by a fractional Brownian motion with Hurst parameter in $(1/3,1/2]$.
\end{abstract}
\maketitle


\section{Introduction}

In this article, we are interested in developing a pathwise theory for the following type of stochastic evolution equations
\begin{align}\label{eqn1}
\left\{
\begin{array}{ll}
du(t) &= Au(t) dt  + G(u(t))d\omega(t),\\
u(0) &= u_0 \in V_\delta,
\end{array}\right.
\end{align}
in a Hilbert space $V$, where $\omega$ is a H\"older continuous function with
H\"older exponent $\beta\in(1/3, 1/2)$, $A$ is the infinitesimal
generator of an analytic semigroup $S(\cdot)$ on $V$,  $G$ is a nonlinear term
satisfying certain assumptions which will be described in the
next sections,  and $V_\delta=D((-A)^\delta)$ for adequate $\delta>0$.
As a particular case of driving noises we  can consider a
fractional Brownian motion (fBm) $B^H$ with Hurst parameter $H\in (1/3,1/2]$. In general, an fBm $B^H$ with Hurst parameter $H\in (0,1)$ is
a stochastic process which differs significantly from the standard
Brownian motion and, in particular, when $H\not =1/2$ is not a martingale, so the Ito integrals cannot be used to describe integration when having this type of integrators.
Our interpretation of pathwise is that
we obtain a solution of these stochastic equations which does not produce exceptional sets depending on the initial conditions. In the classical theory of stochastic evolution equations ($H=1/2)$, see for instance the monograph
by Da Prato and Zabczyk \cite{DaPrato}, stochastic integrals are constructed to be a limit in probability of particular random variables defined only almost surely, where the exceptional sets may depend on the initial conditions. Pathwise results for that classical theory are only available for the white noise case ($G={\rm id}$) and a few special cases when $u\mapsto G(u)$ is  linear.

During the last 15 years it can be found in the literature several attempts to develop a theory for stochastic integration for integrators which are not given  by a Wiener process, and in particular, for the fractional Brownian motion $B^H$. One of these attempts is given by the {\em Rough Path Theory}, and we refer to the monographs by Lyons and Qian \cite{Lyons} and Friz and Victoir \cite{FV10} for a comprehensive presentation of this theory.

A different technique called {\em Fractional Calculus} was developed by Z{\"a}hle  \cite{Zah98}, who considered for a fractional Brownian motion with $H>1/2$ the well-known Young integral.
In contrast to the Ito-or Stratonovich integral, that integral can be  defined in a {\em pathwise sense}. In particular, that integral is given by  fractional derivatives, which allow a pathwise estimate of the integrals in terms of integrand and integrator using special norms, see also Nualart and R{\u{a}}{\c{s}}canu \cite{NuaRas02}. In this article
the authors are able to show the existence and uniqueness of the solution of a finite-dimensional stochastic differential equation driven by a fractional  Brownian motion for $H>1/2$. These results were extended by Maslowski and Nualart \cite{MasNua03} to show the existence of mild solutions for stochastic  evolution equations. In particular, the mild solution exists for {\em any}  H{\"o}lder continuous noise path with H{\"o}lder exponent  larger than 1/2 if the coefficients are sufficiently smooth and the linear part of the equation generates an analytic semigroup.

In addition to the mentioned monographs \cite{FV10} and \cite{Lyons},  other references using the rough path theory to solve stochastic differential equations are Anh and Grecksch \cite{AnhGre99}, Coutin {\it et al.} \cite{coutin} and Lejay \cite{Lejay}, to name only a few of them. However, to the best of our knowledge, the literature concerning the study of stochastic partial differential equations by using the rough path theory is not so extensive. In this group interesting articles are Caruana and Friz \cite{caruana}, Caruana {\it et al.} \cite{caruana2}, Friz and Oberhauser \cite{FO11}, Gubinelli {\it et al.} \cite{GuLeTin}, Hinz and Z\"ahle \cite{HinZah09}, and the recent papers by Deya {\it et al.} \cite{DGT}  and by Gubinelli and Tindel \cite{GuTin}. In particular, in this last paper the authors proved local mild solutions of stochastic PDEs driven by rough paths  for $\beta$-H{\"o}lder continuous paths ($\beta\in (1/3,1/2]$) with a special quadratic nonlinearity. The moving frame ansatz has been used
by Teichmann \cite{Tmann} to consider Rough-Path-differential equations. \\

Our idea to face the study of mild solutions for \eqref{eqn1} is to combine the rough path theory with classical tools, inspired by the work by Hu and Nualart \cite{HuNu09}.  In order to do that, they use a special fractional derivative, the so-called {\em compensated  }fractional derivative. Thanks to that, they are able to formulate an existence and  uniqueness result for finite-dimensional stochastic differential  equations having coefficients which are sufficiently smooth. We want to stress that, to formulate an operator equation solving this problem, they need a second equation for the so-called {\em area} in the space of tensors.

We adapt the techniques in \cite{HuNu09} to obtain mild solutions for our infinite-dimensional stochastic evolution equation (\ref{eqn1}), assuming that the linear part generates an analytic  semigroup $S$ on a separable Hilbert space. However,
there are significant differences between our setting and the one in  \cite{HuNu09}.
We consider mild solutions for the trajectories part of the evolution equations.  As we will see, we have to build the right fixed point argument in order to solve our equation: we will be able to obtain solutions in small intervals that later on can be concatenated to turn out the mild solution over any interval.
And due to the appearance of the term $S(t)u_0$, which does not appear in the finite dimensional situation, we have to adapt any of these intervals to the size of the initial condition.
Moreover, one has to face with the correct area equation in this infinite-dimensional setting. In a first part of this article we will consider that our evolution equation is driven by a regular noise path, which makes its analysis easier. For this purpose we construct an area object $\omega \otimes_S \omega$ depending on the noise path $\omega$ as well as on the semigroup $S$. This term is studied using tools from the fractional calculus theory. According to these tools we are able to state some useful properties for this area, for instance the Chen equality. We would like to stress that in \cite{DGT} the authors refer to the twisted iterated integral, which would be the counterpart of our tensor $\omega \otimes_ S\omega$, but some problems defining this area are mentioned in Remark 4.3 of this article. \\

The problem of showing that solutions of stochastic evolution equations generate random dynamical systems is unsolved even for the stochastic partial differential equations  driven by the standard Brownian motion. The main difficulties are (i) the stochastic integral is only defined almost surely where the exceptional set may depend on the initial state; and (ii) Kolmogorov's theorem, as cited in Kunita \cite{Kunita90} Theorem 1.4.1,  is only applicable for finite dimensional random fields. However, there are some partial results for additive as well as multiplicative noise (see for example, \cite{FL}, \cite{DLS} \cite{DLS2}, \cite{CarKloSchm03} and \cite{MZZ}). In the recent work \cite{GLS09}, under appropriate conditions on $A$, $F$ and $G$, it has been shown that the stochastic partial differential equation \eqref{eqn1} above driven by a fBm with Hurst parameter  $H \in (1/2, 1)$  generates a random dynamical system.

Thanks to the pathwise results that we are going to establish in this article, in a forthcoming article we will be able to go one step further with respect to this unsolved problem, since we will prove that stochastic evolution equations like (\ref{eqn1}), driven by a fBm $B^H$ with $H\in (1/3,1/2]$, generate random dynamical systems.\\

The article is organized as follows. In Section 2 we collect the main tools and give the main assumptions.
In particular, we mention important properties of the fractional  derivatives. In Section 3 we consider the evolution equation
for a {\em smooth} noise path. That analysis
provides us a meaningful definition of solutions of \eqref{eqn1}.
We rewrite the equation using  fractional derivatives and we then show that we need to present
a second equation for the area part. To do this, as we have announced previously, we  introduce a tensor
defined by the semigroup generated by $A$, being the crucial step for our  considerations, and which constitutes the main difference with respect to the finite-dimensional case developed in \cite{HuNu09}. In Section 4 we give the definition of a solution for our system consisting of a path- and an area variables,  and an existence and uniqueness theorem is formulated. Since we consider regular non-linear operators $G$, this allows to prove a {\em global} existence result.  We also present two examples to show nonlinearities $G$ that fit the abstract theory. Some technical proofs are shifted to the Appendix Section.\\

We would like to point out that we could also add a nonlinear diffusion term $F$ on the right-hand side of the equation in (\ref{eqn1}). Nevertheless, to simplify the whole presentation we have not considered it since the $dt$-nonlinearity is not the interesting problem in the paper.\\

Finally, we refer to \cite{GLS12note} for a short and recent announcement of our results.

\section{Preliminaries}\label{preli}

Let in general $V, \, \tilde V,\, \hat V$ be separable Hilbert spaces. We denote by $L(V,\tilde V)$ the Banach space of linear operators from $V$ to $\tilde V$ and by $L_2(V,\tilde V)\subset L(V,\tilde V)$ the space of Hilbert--Schmidt operators, which is a separable Hilbert space. For $T\in L(\tilde V, \hat V),\, G\in L_2(V,\tilde V)$ (and vice versa for $T\in L_2(\tilde V, \hat V),\, G\in L(V,\tilde V)$) we have that

\begin{equation*}
    \|TG\|_{L_2(V,\hat V)}\le \|T\|_{L(\tilde V,\hat V)}\|G\|_{L_2(V,\tilde V)},\quad
     \|TG\|_{L_2(V,\hat V)}\le \|T\|_{L_2(\tilde V,\hat V)}\|G\|_{L(V,\tilde V)}.
\end{equation*}

Consider now the separable Hilbert space  $(V, |\cdot|,(\cdot,\cdot))$ and assume that $S(\cdot)$ is an analytic semigroup on $V$ generated by an operator $A$. We also assume that $A$ is a strictly negative operator  generating a complete orthonormal basis $(e_i)_{i\in\mathbb{N}}$ in $V$.  Let $D((-A)^\delta)$, $\delta \in \RR$ denote the domain of the fractional power $(-A)^{\delta}$ equipped with the graph norm $|x|_{D((-A)^{\delta})}:=|(-A)^{\delta}x|$, see Pazy \cite{Pazy} Section 2.6.

\vskip.4cm
For any $t>0$ the
following inequalities hold
\begin{align}
  &\|S(t)\|_{L(D((-A)^{\delta}), D((-A)^{\gamma}))}=  \|(-A)^\gamma S(t)\|_{L(D((-A)^{\delta}),V)}\le ct^{\delta-\gamma},\quad\text{for }
  \gamma>\delta\ge0,
  \label{eq1} \\
 &\|S(t)-{\rm
id}\|_{L(D((-A)^{\sigma}),D((-A)^{\theta}))} \le c
t^{\sigma-\theta}, \quad \text{for } \sigma-\theta\in (0,1], \label{eq2}
\end{align}
since $S(\cdot)$ is an analytic semigroup, see \cite{Chu02}, Page 84 (it should be also taken into account that $(-A)^\mu$ is an isomorphism from $D((-A)^{\delta+\mu})$ to $D((-A)^{\delta})$.
\vskip.4cm

In addition, the following crucial properties, which proofs are immediate consequences of the previous inequalities, are satisfied for
any analytic semigroup $S(\cdot)$:
\begin{lemma}\label{l0}
For any $0<\nu \le 1$, $0<\beta\leq 1$, $0\le\delta <\gamma+\nu\in [0,1)$, $0\le \rho$ there exists a constant $c>0$ such that for $0<q<r< s<t$ we have that
\begin{align*}
\|S(&t-r)-S(t-q)\|_{L(D((-A)^{\delta}),D((-A)^{\gamma}))}\le c(r-q)^\nu(t-r)^{-\nu-\gamma+\delta},\\
\|S(&t-r)- S(s-r)-S(t-q)+S(s-q)\|_{L(D((-A)^{\rho}),D((-A)^{\rho}))}\leq
c(t-s)^{\beta}(r-q)^{\nu}(s-r)^{-(\nu+\beta)}.
\end{align*}
\end{lemma}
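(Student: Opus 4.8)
\emph{Strategy.} The plan is to reduce both inequalities to operator‑norm estimates on $V$ for products of the shape ``a difference of semigroup operators, multiplied by a suitable fractional power of $-A$'', which are then read off directly from \eqref{eq1} and \eqref{eq2}. The reduction rests on the elementary facts that the semigroup $S(\cdot)$, the differences $S(h)-\mathrm{id}$, and the fractional powers $(-A)^{\pm\mu}$ all commute with one another, and that $(-A)^{-\mu}$ is bounded on $V$ for $\mu\ge 0$. In particular, writing $\|\cdot\|$ for the $L(V)$‑norm, for every operator $T$ commuting with $A$ one has $\|T\|_{L(D((-A)^{\delta}),D((-A)^{\gamma}))}=\|(-A)^{\gamma}T(-A)^{-\delta}\|$, by the substitution $x\mapsto (-A)^{-\delta}x$.

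\emph{First estimate.} Since $0<q<r<t$, the semigroup law gives $S(t-q)=S(t-r)S(r-q)$, hence $S(t-r)-S(t-q)=S(t-r)(\mathrm{id}-S(r-q))$. Conjugating by $(-A)^{\gamma}(\cdot)(-A)^{-\delta}$ and inserting $(-A)^{-\alpha}(-A)^{\alpha}$ yields the factorisation
\[
(-A)^{\gamma}\big(S(t-r)-S(t-q)\big)(-A)^{-\delta}=\big[(-A)^{\gamma+\alpha-\delta}S(t-r)\big]\,\big[(-A)^{-\alpha}(\mathrm{id}-S(r-q))\big].
\]
The first bracket has norm $\le c(t-r)^{\delta-\gamma-\alpha}$ by \eqref{eq1}, which applies because $\gamma+\alpha-\delta>0$ (here we use $\gamma\ge\delta$ and $\alpha>0$); the second bracket has norm $\le c(r-q)^{\alpha}$ by \eqref{eq2} with $\sigma=\alpha$, $\theta=0$, legitimate since $\alpha\in(0,1]$. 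Multiplying the two bounds gives the claim.

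\emph{Second estimate.} Applying the semigroup law three times ($t-q=(t-r)+(r-q)$, $s-q=(s-r)+(r-q)$, $t-r=(t-s)+(s-r)$) one obtains
\[
S(t-r)-S(s-r)-S(t-q)+S(s-q)=\big(S(t-s)-\mathrm{id}\big)\,S(s-r)\,\big(\mathrm{id}-S(r-q)\big).
\]
Since $(-A)^{\gamma}$ and $(-A)^{-\gamma}$ now simply cancel under conjugation, it suffices to bound the $L(V)$‑norm of the right‑hand side. Split $S(s-r)=S(\tfrac{s-r}{2})S(\tfrac{s-r}{2})$ and regroup as $\big[(S(t-s)-\mathrm{id})S(\tfrac{s-r}{2})\big]\big[S(\tfrac{s-r}{2})(\mathrm{id}-S(r-q))\big]$. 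Inserting $(-A)^{\pm\beta}$ into the first factor and using \eqref{eq2} (with $\sigma=\beta$) together with \eqref{eq1} gives $c(t-s)^{\beta}(s-r)^{-\beta}$; inserting $(-A)^{\pm 2\beta}$ into the second factor and using \eqref{eq2} (with $\sigma=2\beta$, which needs $2\beta\le 1$, i.e. precisely the hypothesis $\beta\le 1/2$) together with \eqref{eq1} gives $c(r-q)^{2\beta}(s-r)^{-2\beta}$. The product of the two is the asserted bound.

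\emph{Main point of care.} The only delicate bookkeeping is in the second estimate: the factors $t-s$, $r-q$, $s-r$ must be paired so that the two ``$-\mathrm{id}$'' differences absorb the positive powers $(t-s)^{\beta}$ and $(r-q)^{2\beta}$ while the smoothing of $S(s-r)$ supplies the full singular power $(s-r)^{-3\beta}=(s-r)^{-\beta}(s-r)^{-2\beta}$; this is what dictates the $2$–$1$ splitting of $S(s-r)$ and, through \eqref{eq2}, the restriction $\beta\le 1/2$. Beyond this pairing, everything is a direct application of \eqref{eq1}–\eqref{eq2} once the commutativity reduction is set up, so I do not expect a genuine obstacle.
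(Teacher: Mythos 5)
Your proof is correct and follows exactly the route the paper intends: the paper gives no proof, asserting only that both bounds are ``immediate consequences'' of \eqref{eq1} and \eqref{eq2}, and your argument supplies precisely that derivation via the semigroup law and commutation with fractional powers of $-A$. The factorisation $S(t-r)-S(s-r)-S(t-q)+S(s-q)=(S(t-s)-\mathrm{id})\,S(s-r)\,(\mathrm{id}-S(r-q))$ together with the $\beta$/$2\beta$ splitting of the smoothing of $S(s-r)$ (which is where the hypothesis $\beta\le 1/2$ enters through \eqref{eq2}) is the expected bookkeeping, and all exponent checks are in order.
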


In what follows, let us abbreviate $V:=V_0,\, V_\delta:=D((-A)^{\delta})$ with norm $|\cdot|_{V_\delta}$. If $(\lambda_i)_{i\in \mathbb N}$ denotes the spectrum of $A$, since $(e_i)_{i\in\mathbb{N}}$ is an orthonormal basis in $V$, it follows that $(e_i/\lambda_i^\delta)_{i\in\mathbb{N}}$ is an orthonormal basis in $V_\delta$.

We also need the following estimates concerning the drift $G$ in system \eqref{eqn1}.
\begin{lemma}\label{l2}
Suppose that $G: V\to L_2(V, \tilde V)$ is bounded and twice continuously
Fr\'echet--differentiable with bounded first and second
derivatives $DG(u)$ and $D^2G(u)$, for $u\in V$. Let us denote, respectively, by $c_G$, $c_{DG},\, c_{D^2G}$
the bounds for $G$, $DG$ and $D^2G$.
Then, for $u_1,\,u_2,\,v_1,\,v_2\in V$, we have
\begin{align*}
&\|G(u_1)\|_{L_2(V, \tilde V)}\le c_G,\\
&\|G(u_1)-G(v_1)\|_{L_2(V, \tilde V)}\le c_{DG}|u_1-v_1|,\\
&\|DG(u_1)-DG(v_1)\|_{L_2{(V\times V,\tilde V)}}\le c_{D^2G}|u_1-v_1|,\\
&\|G(u_1)-G(u_2)-DG(u_2)(u_1-u_2)\|_{L_2(V, \tilde V)}\le c_{D^2G}|u_1-u_2|^2,\\
&\|G(u_1)-G(v_1)-(G(u_2)-G(v_2))\|_{L_2(V, \tilde V)}\\
& \quad \le c_{DG}|u_1-v_1-(u_2-v_2)|+c_{D^2G} |u_1-u_2|(|u_1-v_1|+|u_2-v_2|).
\end{align*}
\end{lemma}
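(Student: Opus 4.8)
The plan is to derive all six estimates purely from the Taylor-type expansion of $G$ together with the boundedness of the Fréchet derivatives $DG$, $D^2G$ (and, for the last line, $D^3G$); no use of the semigroup or fractional calculus is needed. Throughout, I would repeatedly exploit the first inequality of Section~2, namely $\|TG\|_{L_2}\le \|T\|_{L}\|G\|_{L_2}$ (and its companion), to move operator norms in and out of Hilbert--Schmidt norms, since $DG(u)$ acts as a linear map while $G(u)\in L_2(V,\tilde V)$.

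First I would record the two most elementary bounds: $\|G(u_1)\|_{L_2}\le c_G$ is just the hypothesis, and $\|G(u_1)-G(v_1)\|_{L_2}\le c_{DG}|u_1-v_1|$ follows from the mean value theorem in integral form, $G(u_1)-G(v_1)=\int_0^1 DG(v_1+\theta(u_1-v_1))(u_1-v_1)\,d\theta$, taking $L_2$-norms under the integral and using $\|DG\|\le c_{DG}$; the same argument applied to $DG$ in place of $G$ gives the third estimate. The fourth estimate, $\|G(u_1)-G(u_2)-DG(u_2)(u_1-u_2)\|_{L_2}\le c_{D^2G}|u_1-u_2|^2$, is the second-order Taylor remainder: write it as $\int_0^1\big(DG(u_2+\theta(u_1-u_2))-DG(u_2)\big)(u_1-u_2)\,d\theta$ and bound the integrand by $c_{D^2G}\,\theta|u_1-u_2|\cdot|u_1-u_2|$, then integrate in $\theta$.

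For the fifth (the "second difference" estimate) I would telescope: write
\begin{align*}
G(u_1)-G(v_1)-(G(u_2)-G(v_2))
&=\big(G(u_1)-G(v_1)-DG(v_1)(u_1-v_1)\big)\\
&\quad-\big(G(u_2)-G(v_2)-DG(v_1)(u_2-v_2)\big)\\
&\quad+DG(v_1)\big(u_1-v_1-(u_2-v_2)\big),
\end{align*}
then bound the last term by $c_{DG}|u_1-v_1-(u_2-v_2)|$, and handle the difference of the two Taylor remainders using the integral representations and the Lipschitz bound on $DG$; the cross term $|u_1-u_2|(|u_1-v_1|+|u_2-v_2|)$ appears exactly from comparing the remainder integrals over the segments $[v_1,u_1]$ and $[v_2,u_2]$. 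The sixth estimate is literally the same computation with $(G,DG,D^2G)$ replaced by $(DG,D^2G,D^3G)$, which is why a bound on $D^3G$ shows up there (implicitly the hypotheses of the lemma should be read as also providing a bound $c_{D^3G}$ for $D^3G$ when the last inequality is invoked).

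The only mildly delicate point — and the one I would be most careful about — is bookkeeping in the telescoping identities for the last two inequalities: one must choose the intermediate "base point" for the Taylor expansions (I used $v_1$, and for the derivatives one compares along parallel segments) so that every leftover term is genuinely controlled by one of the allowed quantities $|u_1-v_1-(u_2-v_2)|$ or $|u_1-u_2|(|u_1-v_1|+|u_2-v_2|)$, with no stray terms. Everything else is a routine application of the fundamental theorem of calculus for Banach-space-valued maps, the triangle inequality in $L_2(V,\tilde V)$, and the operator-vs-Hilbert--Schmidt multiplication inequalities from the start of this section.
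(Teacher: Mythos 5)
Your strategy coincides with the paper's: the text gives no written proof beyond the remark that the estimates follow from the mean value theorem, delegating the last one to Lemma 7.1 of Nualart--R\u{a}\c{s}canu, and your mean-value/Taylor-remainder argument is exactly what is meant. The first four estimates are handled correctly and completely by your integral mean value formula and the second-order remainder bound.

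For the fifth (and, mutatis mutandis, the sixth) estimate, your telescoping is the right idea but, as written, does leave a stray term, so the "delicate bookkeeping" you flag is a real issue and not just a matter of choosing the base point. Comparing the two remainder integrals over the segments $[v_1,u_1]$ and $[v_2,u_2]$ yields, besides $c_{DG}|u_1-v_1-(u_2-v_2)|$, a contribution of order $c_{D^2G}\,(|u_1-u_2|+|v_1-v_2|)\,|u_2-v_2|$, and $|v_1-v_2|$ is not one of the allowed quantities. Writing $|v_1-v_2|\le |u_1-u_2|+|u_1-v_1-(u_2-v_2)|$ leaves a residual of the form $c_{D^2G}\,|u_1-v_1-(u_2-v_2)|\,|u_2-v_2|$, which cannot be dropped pointwise; it is absorbed by a case distinction: if $|u_2-v_2|\le 2c_{DG}/c_{D^2G}$ this residual is dominated by a multiple of $c_{DG}|u_1-v_1-(u_2-v_2)|$, while if $|u_2-v_2|> 2c_{DG}/c_{D^2G}$ the crude bound $\|G(u_1)-G(u_2)\|+\|G(v_1)-G(v_2)\|\le c_{DG}\bigl(2|u_1-u_2|+|u_1-v_1-(u_2-v_2)|\bigr)$ already implies the claim, since then $2c_{DG}|u_1-u_2|\le c_{D^2G}|u_1-u_2||u_2-v_2|$. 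This absorption (or, equivalently, the two-parameter interpolation used in the cited Lemma 7.1) is the one missing step; with it your argument closes, up to immaterial multiplicative constants.
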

These estimates follows by the mean value theorem. For the last
estimate we refer to Nualart and R{\u{a}}{\c{s}}canu \cite{NuaRas02} Lemma
7.1.

Notice that, in particular, $DG: V\to L_2(V,L_2(V, \tilde V))$ (or equivalently,  $DG: V\to L_2(V\times V,\tilde V)$, where this last space is defined below) is a bilinear map
and similarly $D^2G(u)$ is a trilinear map.

\begin{lemma}\label{l3}
In addition to the assumptions of Lemma \ref{l2}, suppose that $G$
is three times continuously Fr\'echet--differentiable where the
third derivative is uniformly bounded in $L_2(V,\tilde V)$. This bound is denoted by
$c_{D^3G}$. Then, for $u_1,\,u_2,\,v_1,\,v_2\in V$, we have
\begin{align*}
    &\|DG(u_1)-DG(v_1)-(DG(u_2)-DG(v_2))\|_{L_2(V\times V,\tilde V)}\\
& \quad \le c_{D^2G}|u_1-v_1-(u_2-v_2)|+c_{D^3G} |u_1-u_2|(|u_1-v_1|+|u_2-v_2|),\\
&\|G(u_1)-G(u_2)-DG(u_2)(u_1-u_2)-(G(v_1)-G(v_2)-DG(v_2)(v_1-v_2))\|_{L_2(V, \tilde V)}\\
& \quad \le c_{D^2G}
    (|u_1-u_2|+|v_1-v_2|)|u_1-v_1-(u_2-v_2)|\\
& \quad +c_{D^3G}|v_1-v_2||u_2-v_2|(|u_1-u_2|+|u_1-v_1-(u_2-v_2)|).
\end{align*}
\end{lemma}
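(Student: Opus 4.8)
The plan is to set $\Phi(x,y):=G(x)-G(y)-DG(y)(x-y)$ for $x,y\in V$, so that the quantity in question is $\|\Phi(u_1,u_2)-\Phi(v_1,v_2)\|_{L_2(V,\tilde V)}$, and to reduce it to the double-difference estimates already contained in Lemma~\ref{l2}. Write $\zeta:=u_1-v_1-(u_2-v_2)=(u_1-u_2)-(v_1-v_2)$. First I would peel off the linear parts: adding and subtracting $DG(u_2)(v_1-v_2)$ gives $DG(u_2)(u_1-u_2)-DG(v_2)(v_1-v_2)=DG(u_2)(\zeta)+\bigl(DG(u_2)-DG(v_2)\bigr)(v_1-v_2)$, whence
\[
\Phi(u_1,u_2)-\Phi(v_1,v_2)=\bigl[G(u_1)-G(v_1)-(G(u_2)-G(v_2))\bigr]-DG(u_2)(\zeta)-\bigl(DG(u_2)-DG(v_2)\bigr)(v_1-v_2).
\]
Then, writing $G(u_1)-G(u_2)=\int_0^1 DG(u_2+t(u_1-u_2))(u_1-u_2)\,dt$ and the analogous identity for $G(v_1)-G(v_2)$, and once more adding and subtracting $DG(u_2+t(u_1-u_2))(v_1-v_2)$ inside the integral, the double difference of $G$ above becomes $\int_0^1 DG(u_2+t(u_1-u_2))(\zeta)\,dt+\int_0^1\bigl[DG(u_2+t(u_1-u_2))-DG(v_2+t(v_1-v_2))\bigr](v_1-v_2)\,dt$. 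Substituting this back and cancelling, one arrives at
\[
\Phi(u_1,u_2)-\Phi(v_1,v_2)=\int_0^1\bigl[DG(u_2+t(u_1-u_2))-DG(u_2)\bigr](\zeta)\,dt+\int_0^1\Bigl(\bigl[DG(u_2+t(u_1-u_2))-DG(v_2+t(v_1-v_2))\bigr]-\bigl[DG(u_2)-DG(v_2)\bigr]\Bigr)(v_1-v_2)\,dt,
\]
and I call the two integrals $I$ and $II$ respectively.

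For $I$: by the Lipschitz bound for $DG$ in Lemma~\ref{l2} we have $\|DG(u_2+t(u_1-u_2))-DG(u_2)\|\le c_{D^2G}\,t\,|u_1-u_2|$, hence $\|I\|\le\tfrac12 c_{D^2G}|u_1-u_2|\,|\zeta|$, which sits inside the first term on the right of Lemma~\ref{l3}. For $II$, the operator inside the integral is precisely a second difference of $DG$: with $X_1=u_2+t(u_1-u_2)$, $Y_1=v_2+t(v_1-v_2)$, $X_2=u_2$, $Y_2=v_2$ it equals $[DG(X_1)-DG(Y_1)]-[DG(X_2)-DG(Y_2)]$, so I would apply the last estimate of Lemma~\ref{l2} to it. Since $X_1-Y_1-(X_2-Y_2)=t\,\zeta$, $X_1-X_2=t(u_1-u_2)$ and $|X_1-Y_1|+|X_2-Y_2|\le 2|u_2-v_2|+|u_1-v_1|$, that estimate bounds the integrand by $c_{D^2G}\,t\,|\zeta|+c_{D^3G}\,t\,|u_1-u_2|\,(2|u_2-v_2|+|u_1-v_1|)$; multiplying by $|v_1-v_2|$ and integrating in $t$ then gives
\[
\|II\|\le\tfrac12 c_{D^2G}|\zeta|\,|v_1-v_2|+c_{D^3G}\,|u_1-u_2|\,|v_1-v_2|\,\bigl(|u_2-v_2|+|u_1-v_1|\bigr).
\]

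It then remains to add the bounds for $I$ and $II$ and to reorganize the right-hand side, using the triangle-type inequalities $|u_1-u_2|\le|v_1-v_2|+|\zeta|$ and $|u_1-v_1|\le|u_2-v_2|+|\zeta|$, so that everything is collected into the two blocks $c_{D^2G}(|u_1-u_2|+|v_1-v_2|)|\zeta|$ and $c_{D^3G}|v_1-v_2|\,|u_2-v_2|\,(|u_1-u_2|+|\zeta|)$ claimed in Lemma~\ref{l3}, up to harmless absolute constants. I expect this last bookkeeping step to be the main obstacle: several of the intermediate summands carry a $c_{D^3G}$ prefactor together with an extra small factor and must be routed with care — some into the $c_{D^3G}$-block via the bounds just recalled, and, where a comparison such as $|\zeta|\le|u_2-v_2|$ is unavailable, back into the $c_{D^2G}$-block by using the crude estimate $\|DG(\cdot)-DG(\cdot)\|\le 2c_{D^2G}$ in place of the Lipschitz one for the relevant difference of $DG$. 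An essentially equivalent route, which avoids invoking the $DG$-double-difference of Lemma~\ref{l2}, starts instead from the integral form of the second-order Taylor remainder $\Phi(x,y)=\int_0^1 s\,D^2G\bigl(x+s(y-x)\bigr)(y-x,y-x)\,ds$ and splits $D^2G(a_s)(\cdot,\cdot)-D^2G(b_s)(\cdot,\cdot)$ into a change-of-base-point part (controlled by $c_{D^3G}|a_s-b_s|$) and a change-of-argument part (handled by bilinearity), the latter producing directly the $c_{D^2G}(|u_1-u_2|+|v_1-v_2|)|\zeta|$ block; the remaining bookkeeping is of the same kind.
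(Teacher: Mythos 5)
The paper itself offers no proof of this lemma --- it only cites Proposition 6.4 of Hu and Nualart \cite{HuNu09} --- so your argument must stand on its own, and it has a genuine gap in the treatment of $II$. Your algebra up to the splitting $I+II$ is correct, and $I$ is fine, but applying the last estimate of Lemma \ref{l2} to $\Theta_t=[DG(X_1)-DG(Y_1)]-[DG(X_2)-DG(Y_2)]$ feeds the factor $|X_1-Y_1|\le |u_2-v_2|+t|\zeta|$ into the $c_{D^3G}$-term and leaves the residual summand $c_{D^3G}\,|u_1-u_2|\,|v_1-v_2|\,|\zeta|$ in your final bound. That summand is \emph{not} dominated by the right-hand side of the lemma: take $u_2=v_2$ and $|u_1-u_2|=|v_1-v_2|=M$, so the claimed bound is $2c_{D^2G}M|\zeta|$ (the $c_{D^3G}$-block vanishes), while your residual is of order $c_{D^3G}M^2|\zeta|$, which is larger once $M>2c_{D^2G}/c_{D^3G}$. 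Neither of your proposed rescues works: $|\zeta|\le|u_2-v_2|$ is unavailable by assumption, and replacing a Lipschitz bound for a difference of $DG$ by a crude uniform bound destroys precisely the factor $|\zeta|$ that the $c_{D^2G}$-block requires. Your alternative route through $D^2G(a_s)-D^2G(b_s)$ has the same defect, since $|a_s-b_s|\le|u_2-v_2|+s|\zeta|$ again produces an unabsorbable term $c_{D^3G}|\zeta|\,|v_1-v_2|^2$. So what you have actually proved is a strictly weaker inequality with an extra third term.

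The missing idea is that the $\zeta$-part of the base-point displacement must never be estimated through $c_{D^3G}$; only the pure translation by $e:=u_2-v_2$ should be. Concretely, insert the intermediate pair $(v_1+e,\,u_2)$ (note $v_2+e=u_2$ and $(v_1+e)-u_2=v_1-v_2$). For the first half, $\Phi(u_1,u_2)-\Phi(v_1+e,u_2)=\int_0^1\bigl[DG\bigl((1-t)(v_1+e)+tu_1\bigr)-DG(u_2)\bigr](\zeta)\,dt$, and since the argument of the first $DG$ is a convex combination of $v_1+e$ and $u_1$, hence within distance $|u_1-u_2|+|v_1-v_2|$ of $u_2$, this is bounded by $c_{D^2G}(|u_1-u_2|+|v_1-v_2|)|\zeta|$, i.e.\ exactly the first block. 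For the second half set $\psi(f):=G(v_1+f)-G(v_2+f)-DG(v_2+f)(v_1-v_2)$; then $\tfrac{d}{ds}\psi(se)=\int_0^1\bigl[D^2G(v_2+se+\tau(v_1-v_2))-D^2G(v_2+se)\bigr](v_1-v_2,e)\,d\tau$, whence $|\Phi(v_1+e,u_2)-\Phi(v_1,v_2)|=|\psi(e)-\psi(0)|\le\tfrac12 c_{D^3G}|v_1-v_2|^2\,|u_2-v_2|$, which lies inside the second block because $|v_1-v_2|\le|u_1-u_2|+|\zeta|$. Equivalently, inside your own scheme you can repair $II$ by splitting $\Theta_t=\bigl([DG(X_1)-DG(X_1-e)]-[DG(X_2)-DG(X_2-e)]\bigr)+\bigl(DG(v_2+t(u_1-u_2))-DG(v_2+t(v_1-v_2))\bigr)$: the first bracket is controlled by $2c_{D^3G}\,t\,|u_1-u_2|\,|u_2-v_2|$ via Lemma \ref{l2}, and the second by the plain Lipschitz bound $c_{D^2G}\,t\,|\zeta|$; both contributions then land in the correct blocks.
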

The proof of the last inequality can be found in Hu and Nualart
\cite{HuNu09} Proposition 6.4.

\bigskip

Let us denote by $L_2(V\times V,\tilde V)$ the space of bilinear continuous mappings $B$ from $V\times V$ which satisfy the Hilbert--Schmidt property
\begin{equation*}
    \|B\|_{L_2(V\times V,\tilde V)}^2:=\sum_{i,j=1}^\infty|B(e_i,e_j)|_{\tilde V}^2<\infty
\end{equation*}
for the complete orthonormal basis $(e_i)_{i\in\NN}$ of $V$.
The topological tensor product of
the Hilbert space $V$  is denoted by $V
\otimes V$ with norm $\|\cdot\|$.
In
particular,  $V \otimes V$ is a separable Hilbert space. The elements of
$V\otimes V$ of the type
$v_1\otimes_V v_2$, with $v_1,\,v_2\in V$, are called {\it elementary tensors} and for them  $\|v_1\otimes_V v_2\| = |v_1||v_2|$. Moreover, $(e_i\otimes_V
e_j)_{i,j\in\NN}$ is a complete orthonormal system of $V \otimes V$.

We note that an operator $B\in L_2(V\times V,\tilde V)$ can be {\em extended} to an
operator $\hat B\in L_2(V\otimes V,\tilde V)$. For weaker conditions  we refer to \cite{KadRin97} Chapter 2.6.
More precisely,  there exists a weak Hilbert-Schmidt mapping $p:V\times V\to V\otimes V$ where $p(e_i,e_j)=e_i\otimes_V e_j$ for $i,\,j\in \NN$. Then $\hat B$ on $V\otimes V$ is given by factorization such that $B=\hat Bp$.
In addition, it is easily seen that for the norm of $\hat B\in L_2(V\otimes V,\tilde V)$ it follows that

\begin{equation*}
   \|\hat B\|_{L_2(V\otimes V,\tilde V)}^2:= \sum_{i,j}|\hat B(e_i\otimes_V e_j)|_{\tilde V}^2=\sum_{i,j}|B(e_i,e_j)|_{\tilde V}^2= \|B\|_{L_2(V\times V,\tilde V)}^2.
\end{equation*}
We will write for  $\hat B$ also the symbol $B$.\\

Let $0\leq T_1 <T_2$. For $\beta\in (0,1)$ we introduce the space
of $\beta$--H{\"o}lder continuous functions on $[T_1,T_2]$ with values
in $V$ denoted by $C_{\beta}([T_1,T_2];V)$ with
the  seminorm

\begin{equation*}
\|u\|_{\beta}=\sup_{s<t\in [T_1,T_2]}\frac{|u(t)-u(s)|}{|t-s|^\beta}.
\end{equation*}

If we consider the nonlinear subspace of {\em all} functions from this linear space having the same value at say $T_1$, then $d(u_1,u_2)=\|u_1-u_2\|_{\beta}$ creates a complete metric space  which will be used later. If we add $|u(T_1)|$ to this seminorm we obtain a Banach space. In particular, this norm is equivalent to the standard norm of H{\"o}lder functions on $[T_1,T_2]$.

Let $\Delta_{T_1,T_2}$ be the triangle $\{(s,t):T_1\le s<t\le T_2\}$. We
now introduce the space $C_{\beta+\beta^\prime}(\Delta_{T_1,T_2};V\otimes V)$ of functions $v$ with finite norm given by
\begin{equation*}
   \|v\|_{\beta+\beta^\prime}=\sup_{s<t\in [T_1,T_2]} \frac{\|v(s,t)\|}{|t-s|^{\beta+\beta^\prime}}, \quad
  \beta+\beta^\prime<1.
\end{equation*}
Notice that we prefer not to stress the interval $[T_1,T_2]$ in the notation of the previous norms, even though this interval can be different through the text.\\

Now we aim at
introducing the so called fractional derivatives and later at giving
the pathwise interpretation of the stochastic integral, following
the definition in \cite{Zah98}.

Let $0< \alpha<1$ and consider $g\in I_{T_1+}^\alpha (L^p((T_1,T_2);\mathbb R)),\, \zeta\in
I_{T_2-}^{\alpha} (L^q((T_1,T_2); \mathbb R))$ with $p$, $q\ge 1$ (for the definition of the spaces
$I_{T_1+}^\alpha(L^p((T_1,T_2);\mathbb R))$ and $I_{T_2-}^\alpha(L^q((T_1,T_2);\mathbb R))$ we
refer, for instance, to Samko {\it et al.} \cite{Samko}). The
fractional derivatives in the Weyl sense are defined by
\begin{align}\label{fractder}
    D_{{T_1}+}^\alpha g[r]&=\frac{1}{\Gamma(1-\alpha)}\bigg(\frac{g(r)}{(r-T_1)^\alpha}+\alpha\int_{T_1}^r\frac{g(r)-g(q)}{(r-q)^{1+\alpha}}dq\bigg)\\
    D_{{T_2}-}^\alpha \zeta_{{T_2}-}[r]&=\frac{(-1)^\alpha}{\Gamma(1-\alpha)}
    \bigg(\frac{\zeta(r)-\zeta(T_2)}{(T_2-r)^\alpha}+\alpha\int_r^{T_2}\frac{\zeta(r)-\zeta(q)}{(q-r)^{1+\alpha}}dq\bigg),\nonumber
\end{align}
where $T_1\leq r\leq T_2$, and $ \zeta_{T_2-}(r)= \zeta(r)- \zeta(T_2)$. For such functions, if in addition $1/p+1/q\le 1+\alpha$, the following formula holds (see \cite{Zah98}):
\begin{equation}\label{eq10b}
(-1)^\alpha \int_{T_1}^{T_2} D_{T_1+}^\alpha g[r]\zeta(r)dr=\int_{T_1}^{T_2} g(r)
D_{T_2-}^\alpha \zeta[r]dr.
\end{equation}
If now we assume that $g(T_1+),\,\zeta(T_1+),\,\zeta(T_2-)$ exist, being respectively the right side limit of $g$ at $T_1$ and the right and left side limits of $\zeta$ at $T_1,\,T_2$, and that $g_{T_1+} \in I_{T_1+}^\alpha (L^ p((T_1,T_2);\mathbb R)),\, \zeta_{T_2-} \in
I_{T_2-}^{\alpha} (L^{q}((T_1,T_2); \mathbb R))$ with $1/p+1/q\le 1$, then
\begin{align}\label{eq10bi}
\begin{split}
    \int_{T_1}^{T_2} gd\zeta&=(-1)^\alpha\int_{T_1}^{T_2} D_{T_1+}^\alpha g_{T_1+}[r]D_{T_2-}^{1-\alpha}\zeta_{T_2-}[r]dr+g(T_1+)(\zeta(T_2-)-\zeta(T_1+)).
\end{split}
\end{align}
Here
$g_{T_1+}(\cdot)=g(\cdot)-g(T_1+)$ and $\zeta_{T_2-}(\cdot)=\zeta(\cdot)-\zeta(T_2-)$. In addition, when $\alpha p<1$ and $g(T_1+)$ exists and $g\in I_{T_1+}^\alpha (L^p((T_1,T_2);\mathbb R))$, (\ref{eq10bi}) can be rewritten as
\begin{align}\label{eq29}
    \int_{T_1}^{T_2} gd\zeta&=(-1)^\alpha\int_{T_1}^{T_2} D_{T_1+}^\alpha g[r]D_{T_2-}^{1-\alpha}\zeta_{T_2-}[r]dr.
\end{align}
Notice that in the case that $\zeta$ is not Lipschitz continuous we cannot define the integral on the left-hand side of \eqref{eq10bi} in the classical way. Nevertheless, when $g$ and $\zeta$ are H\"older continuous with exponents $\beta,\,\beta^\prime$ resp., and $\alpha<\beta,\,1-\alpha<\beta^\prime$, we can define the integral. In particular, assume that $g\in C_\beta([T_1,T_2];L_2(V,\tilde V))$, $\zeta\in
C_{\beta^{\prime}} ([T_1,T_2]; V)$ for $0<\alpha<\beta,\,1-\alpha<\beta^\prime$ (note that for these Hilbert-valued functions the fractional derivatives can be defined similar than (\ref{fractder})).
Let us define
\begin{equation}\label{eq10bis}
    \int_{T_1}^{T_2} g(r)d\zeta(r)=(-1)^\alpha\int_{T_1}^{T_2} D_{T_1+}^\alpha g[r]D_{T_2-}^{1-\alpha}\zeta_{T_2-}[r]dr.
    \end{equation}
This expression can be also interpreted as a fractional integration by parts formula. By the separability of $\tilde V$, Pettis' theorem  and by
 \begin{align}\label{INT}
 \begin{split}
\bigg|\int_{T_1}^{T_2}&D_{T_1+}^{\alpha}g[r]D_{T_2-}^{1-\alpha}\zeta_{T_2-}[r]dr\bigg|_{\tilde V}\le
    \int_{T_1}^{T_2}|D_{T_1+}^{\alpha}g[r]D_{T_2-}^{1-\alpha}\zeta_{T_2-}[r]|_{\tilde V}dr\\&\le \int_{T_1}^{T_2}\|D_{T_1+}^{\alpha}g[r]\|_{L_2(V,\tilde V)}|D_{T_2-}^{1-\alpha}\zeta_{T_2-}[r]|dr\\
&\le c \|\zeta\|_{\beta^\prime} \int_{T_1}^{T_2} ( \|g(T_1)\|_{L_2(V,\tilde V)} (r-T_1)^{-\alpha}(T_2-r)^{\alpha+{\beta^\prime}-1}
+ \|g\|_{\beta}  (r-T_1)^{\beta-\alpha}(T_2-r)^{\alpha+{\beta^\prime}-1} )dr \\
   & \le c  \|\zeta\|_{\beta^\prime} \bigg(\|g(T_1)\|_{L_2(V,\tilde V)}(T_2-T_1)^{\beta^\prime} +\|g\|_{\beta}   (T_2-T_1)^{{\beta^\prime}+\beta}\bigg)
    \end{split}
\end{align}
this integral is well-defined. Indeed,

\begin{equation}\label{eq28}
\|D_{T_1+}^{\alpha}g[r]\|_{L_2(V,\tilde V)}\le c\bigg(\frac{\|g(T_1)\|_{L_2(V,\tilde V)}}{(r-T_1)^\alpha}+\frac{\|g\|_{\beta}}{(r-T_1)^{\alpha-\beta}}\bigg),
\end{equation}
and, if $1-\alpha<{\beta^\prime}$, the expression $D_{T_2-}^{1-\alpha}\zeta_{T_2-}[r]$ is well-defined, actually, it is simple to obtain that $|D_{T_2-}^{1-\alpha}\zeta_{T_2-}[r]| \leq \|\zeta\|_\beta (T_2-r)^{{\beta^\prime}+\alpha-1}$.

Let $\{\tilde e_i\}_{i\in \mathbb N}$ be a complete orthonormal basis of $\tilde V$.
Denote by $\pi_m$ and $\tilde \pi_m$ the orthogonal projections on $\{e_1,\cdots,e_m\}$ and $\{\tilde e_1,\cdots,\tilde e_m\}$, respectively, and define $g_j=(\tilde \pi_j-\tilde \pi_{j-1})g$, $\zeta_i=(\pi_i- \pi_{i-1})\zeta$ and $g_{ji}=(\tilde \pi_j-\tilde \pi_{j-1})g( \pi_i- \pi_{i-1})$.
The above estimates allow to exchange the sum and the integral such that we have

\begin{equation}\label{eq10bis1}
    \int_{T_1}^{T_2}g(r)d\zeta(r)=\sum_{j}\bigg(\sum_i\int_{T_1}^{T_2}
    D_{T_1+}^{\alpha}g_{ji}[r]D_{T_2-}^{1-\alpha}\zeta_{iT_2-}[r]dr \bigg)\tilde e_j
\end{equation}
and

\begin{equation*}
    \bigg|\int_{T_1}^{T_2}g(r)d\zeta(r)\bigg|_{\tilde V}
    =\bigg(\sum_j\bigg|\sum_{i}\int_{T_1}^{T_2}g(r)_{ji}d\zeta_i(r)\bigg|^2\bigg)^\frac12<\infty
\end{equation*}
where we have used that

\begin{align*}
   \|D_{T_1+}^{\alpha}g[r]\|_{L_2(V,\tilde V)}&=\bigg( \sum_{i,j}| D_{T_1+}^\alpha g(\cdot)_{ji}[r]|^2\bigg)^\frac12 \le \bigg(\sum_{i,j}\bigg(\frac{1}{\Gamma(1-\alpha)}\bigg(\frac{g_{ji}(r)}{(r-T_1)^\alpha}
   +\alpha\int_{T_1}^r\frac{g_{ji}(r)-g_{ji}(q)}{(r-q)^{1+\alpha}}dq\bigg)\bigg)^2\bigg)^\frac12\\
   &\le \sqrt{2} c\bigg(\frac{(\sum_{i,j}g_{ji}(r)^2)^\frac12}{(r-T_1)^\alpha}
   +\bigg(\sum_{i,j}\bigg(\int_{T_1}^r\frac{g_{ji}(r)-g_{ji}(q)}{(r-q)^{1+\alpha}}dq\bigg)^2\bigg)^\frac12\bigg)\\
   &\le \sqrt{2}c\bigg(\frac{\|g(r)\|_{L_2(V,\tilde V)}}{(r-T_1)^\alpha}+\int_{T_1}^r\frac{\|g(r)-g(q)\|_{L_2(V,\tilde V)}}{(r-q)^{1+\alpha}}dq\bigg)\\
   & \le c (r-T_1)^{-\alpha} (\|g(T_1)\|_{L_2(V,\tilde V)}+\|g\|_\beta (r-T_1)^\beta).
\end{align*}
In a similar manner we can also define integrals with values in the separable Hilbert space $V\otimes V$
when $g(r)\in L_2(V,\RR)\cong V$ by

\begin{equation*}
    \int_{T_1}^{T_2}g(r)\otimes_V d\zeta(r).
\end{equation*}

\begin{remark}\label{sing}
Suppose that $g_{ji}$ satisfies the assumptions for (\ref{eq29}) but is not H{\"o}lder continuous in general. In addition
suppose that
$r\mapsto \|D_{T_1+}^\alpha g[r]\|_{L_2(V,\tilde V)}|D_{T_2-}^{1-\alpha}\zeta_{T_2-}[r]|$ is integrable. Then we can define the integrals (\ref{eq10bis}) and (\ref{eq10bis1}).
This will be used later on for  $g(r)=S(T_2-r)f(r)$ where $f$ is an appropriate function. Note that $r\mapsto S(T_2-r)x,\,x\in V$ is not H{\"o}lder continuous but its finite dimensional approximations are Lipschitz. \end{remark}

Consider now $u\in C_{\beta}([T_1,T_2];V)$, $\zeta\in C_{\beta^\prime}([T_1,T_2];V)$ and $v\in C_{\beta+\beta^\prime}(\Delta_{T_1,T_2};V\otimes
V))$  such that the so called {\em Chen equality} holds, that is, for
$T_1\le s\le r\le t\le T_2$,
\begin{equation}\label{chen}
   v(s,r)+v(r,t)+(u(r)-u(s))\otimes_V(\zeta(t)-\zeta(r))=
   v(s,t).
\end{equation}

\begin{remark}\label{r1}
If $\zeta$ is continuously differentiable, an example for $v$ is given by $(u\otimes\zeta)$ where
\begin{align}\label{eq11}
(u\otimes\zeta)(s,t)=\int_s^t(u(\tau)-u(s))\otimes_V
\zeta^\prime(\tau)d\tau=v(s,t).
\end{align}
This expression is clearly well-defined and belongs (at least) to the space $C_{\beta+\beta^\prime}(\Delta_{T_1,T_2};V\otimes V)$. Moreover, the Chen equality
easily follows.

\end{remark}
In addition, for $v\in
C_{\beta+\beta^\prime}(\Delta_{T_1,T_2};V\otimes V)$ and for $r \in [T_1,T_2]$ we
introduce the following fractional derivative
\begin{align}\label{vfd}
\mathcal{D} _{{T_2}-}^{1-\alpha}v[r] &=\frac{(-1)^{1-\alpha}}{%
\Gamma(\alpha)} \bigg( \frac{v(r,T_2)}{(T_2-r)^{1-\alpha}}%
+(1-\alpha) \int_r^{T_2} \frac{v(r,\tau)}{(\tau-r)^{2-\alpha}}d\tau %
\bigg).
\end{align}

Suppose now that $g(r)=G(u(r))$ where $u\in C_\beta([T_1,T_2];V)$ such that $\alpha<\beta,\,\alpha+\beta^\prime>1$, with
$G$ having a bounded Fr\'echet derivative $DG$. Then
\begin{equation}\label{intnr}
   \int_{T_1}^{T_2} G(u)d\zeta=(-1)^\alpha \int_{T_1}^{T_2}D_{T_1+}^\alpha G(u(\cdot))[r]D_{T_2-}^{1-\alpha}\zeta_{T_2-}[r]dr
\end{equation}
is well-defined because $G(u(\cdot))$ is $\beta$-H{\"o}lder continuous.
 Assuming in addition that $G$ has a second bounded derivative we can rewrite the integral in \eqref{intnr} as follows
\begin{align}\label{rel3}
\begin{split}
\int_{T_1}^{T_2} G(u)d\zeta = & (-1)^\alpha\int_{T_1}^{T_2} D_{T_1+}^\alpha (G(u(\cdot))-DG(u(\cdot))(u-u(T_1),\cdot))[r]D_{T_2-}^{1-\alpha}\zeta_{T_2-}[r]dr\\+&(-1)^\alpha\int_{T_1}^{T_2}D^\alpha_{T_1+}DG(u(\cdot))(u-u(T_1),\cdot)[r]D_{T_2-}^{1-\alpha}\zeta_{T_2-}[r]dr.
\end{split}
\end{align}
Suppose now that the above condition $\beta>\alpha$ is not satisfied. Then $D_{T_1+}^\alpha G(u)$ is not well defined, in general.
In this case it has sense to rewrite \eqref{intnr} by using
the so called {\em compensated fractional derivative}
\begin{align}\label{compensated}
\begin{split}
\hat D^{\alpha}_{{T_1}+}G(u(\cdot))[r] &=\frac{1}{\Gamma(1-\alpha)} \bigg( \frac{G(u(r))%
}{(r-{T_1})^\alpha}\\
&+\alpha \int_{T_1}^r \frac{G(u(r))-G(u(q))-DG(u(%
q))(u(r)-u(q),\cdot)}{(r-q)^{\alpha+1}}dq \bigg)\in L_2(V,\tilde V)
\end{split}
\end{align}
if $2\beta>\alpha$. By making some computations, it is not difficult to see that there is the following relation between the fractional derivative and the compensated one:
\begin{align}\label{rel}
D^\alpha_{T_1+} (G(u(\cdot))-DG(u(\cdot))(u(\cdot)-u(T_1),\cdot))[r]=\hat
D_{T_1+}^\alpha G(u(\cdot))[r]-D^\alpha_{T_1+} DG(u(\cdot))[r]
(u(r)-u(T_1),\cdot).
\end{align}
In addition, due to the fact that $u,\zeta,v$ are coupled by the Chen equality \eqref{chen}, we  have
\begin{align}\label{rel2}
\begin{split}
D_{T_2-}^{1-\alpha} v(T_1,\cdot)_{T_2-}[r]&=\frac{(-1)^{1-\alpha}}{\Gamma(\alpha)}
    \bigg(\frac{v(T_1,r)-v(T_1,T_2)}{(T_2-r)^\alpha}+(1-\alpha)\int_r^{T_2}\frac{v(T_1,r)-v(T_1,q)}{(q-r)^{2-\alpha}}dq\bigg)\\
    &=\frac{(-1)^{1-\alpha}}{\Gamma(\alpha)}
    \bigg(\frac{-v(r,T_2)-(u(r)-u(T_1))\otimes_V (\omega(T_2)-\omega(r))}{(T_2-r)^\alpha}\\
    &+(1-\alpha)\int_r^{T_2}\frac{-v(r,q)-(u(r)-u(T_1))\otimes_V (\omega(q)-\omega(r))}{(q-r)^{2-\alpha}}dq\bigg)\\
   & =-\dD_{T_2-}^{1-\alpha} v[r]+(u(r)-u(T_1))\otimes_V D_{T_2-}^{1-\alpha}\zeta_{T_2-}[r].
    \end{split}
\end{align}
Therefore, the expressions (\ref{rel3}), (\ref{rel}) and (\ref{rel2}) enable us to express \eqref{intnr} under a weaker regularity condition (if $\beta>\alpha$ is not satisfied, but for instance $2\beta>\alpha$) in the way
\begin{align}\label{eq24}
\begin{split}
  \int_{T_1}^{T_2} G(u)d\zeta&=(-1)^\alpha\int_{T_1}^{T_2}\hat D^{\alpha}_{{T_1}+}G(u(\cdot))[r]D_{T_2-}^{1-\alpha}\zeta_{T_2-}[r]dr\\&-(-1)^{2\alpha-1}\int_{T_1}^{T_2}D_{T_1+}^{\alpha} DG(u(\cdot))[r]\mathcal{D} _{{T_2}-}^{1-\alpha}v[r] dr\\
&=(-1)^\alpha\int_{T_1}^{T_2}\hat D^{\alpha}_{{T_1}+}G(u(\cdot))[r]D_{T_2-}^{1-\alpha}\zeta_{T_2-}[r]dr\\
&-(-1)^{2\alpha-1}\int_{T_1}^{T_2}D_{T_1+}^{2\alpha-1} DG(u(\cdot))[r]D_{T_2-}^{1-\alpha}\mathcal{D} _{{T_2}-}^{1-\alpha}v[r] dr.
\end{split}
\end{align}
Note that the last equality is true due to the property (\ref{eq10b}). Moreover, this previous expression of the integral is similar to the one obtained in \cite{HuNu09}.

If $\hat D_{T_1^+}^\alpha G(u(\cdot))[r]$ has now the right regularity then we can define the first integral on the right-hand side of the
last formula similar to \eqref{eq10bis}.

Now let us consider an integral having the structure of the second integral, namely
\begin{equation}\label{eq25}
    \int_{T_1}^{T_2}D_{T_1+}^{2\alpha-1} g(\cdot)[r]D_{T_2-}^{1-\alpha}\mathcal{D} _{{T_2}-}^{1-\alpha}v[r] dr
\end{equation}
for some $v\in C_{\beta+\beta^\prime}(\Delta_{T_1,T_2};V\otimes V)$. Thanks to the Chen equality (\ref{chen}), it is not hard to prove that the following inequality holds
\begin{equation}\label{eq26}
   \|D_{T_2-}^{1-\alpha}\mathcal{D} _{{T_2}-}^{1-\alpha}v[r]\|\le c(\|v\|_{\beta+\beta^\prime}+\|u\|_\beta\|\omega\|_{\beta^\prime})(T_2-r)^{\beta+\beta^\prime+2\alpha-2}
\end{equation}
(see Lemma 6.3 in
\cite{HuNu09} for some details) and hence this integral can be defined in a similar manner than \eqref{eq10bis} provided that $g(\cdot)$ is $\beta$--H{\"o}lder continuous
with $0<\alpha<1,\, \beta+1>2\alpha,$ and $\beta^\prime>\beta >1-\alpha$. For details in finite dimension we refer to \cite{HuNu09}.

In the next sections we will give sense to the integrals appearing in the definition of mild solution to the infinite dimensional equation \eqref{eqn1}.

\section{Formulation of \eqref{eqn1} for smooth paths $\omega$}\label{regular}

Throughout this section, we assume that the driving path $\omega:[0,T]\to V$ in the system \eqref{eqn1} is  smooth in the sense that $\omega$ is continuous at any $t$ and continuously differentiable except at finitely many points.  Then we derive a system of equations which is needed to define a solution when the noise in only H{\"o}lder continuous, the case to be considered in the next section. When the path $\omega$ is only $V$-valued H\"older continuous  with H\"older exponent in $(1/3,1/2)$, we will consider a piecewise linear approximation $\omega^n$ of $\omega$, for which we can apply the results that we are going to establish throughout this section.

Throughout all the paper, we will use $c$ or $C$ to denote a generic positive constant which value
is not so important and that may change from line to line. That constant may depend on parameters, for instance, it may depend on  $\omega$.

In what follows we assume that $\tilde V=V_\delta$ with complete orthonormal base $({e_i}/{\lambda_i^\delta})_{i\in\mathbb{N}}$. Under such a choice, we consider $G:V\rightarrow L_2(V,V_\delta)$.

For the fixed regular $\omega$, we study the equation
\begin{equation}\label{eq0}
u(t)=S(t)u_0+\int_0^t S(t-s) G(u(s))d\omega(s).
\end{equation}

\begin{lemma}\label{l6} (\cite{Pazy})
Under the condition that $G$ is Lipschitz continuous, the above
equation has a unique global solution which depends continuously
on $u_0\in V_\delta$. Moreover, $u\in C_{\beta}([0,T];V)$ for $\beta\le \delta,\, \beta< 1$.
\end{lemma}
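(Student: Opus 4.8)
The plan is to treat \eqref{eq0} as a fixed-point problem for the map
\[
(\Phi u)(t) = S(t)u_0 + \int_0^t S(t-s)F(u(s))\,ds + \int_0^t S(t-s)G(u(s))\,d\omega(s),
\]
on the space $C_\beta([0,T];V)$ (or a closed ball therein), using that $\omega$ is smooth so that the last integral is an ordinary Bochner integral $\int_0^t S(t-s)G(u(s))\omega'(s)\,ds$ away from the finitely many bad points. First I would record the mapping properties: for $u_0\in V_\delta$, the term $S(\cdot)u_0$ lies in $C_\beta([0,T];V)$ for $\beta\le\delta$ by \eqref{eq1}\,--\,\eqref{eq2} (write $S(t)u_0-S(s)u_0=(S(t-s)-\mathrm{id})S(s)u_0$ and use \eqref{eq2} with the $t^{-(\delta-\beta)}$ bound from \eqref{eq1} for $S(s)u_0\in V_\delta$, which is integrable near $0$); the two integral terms land in $C_\beta([0,T];V)$ by the standard analytic-semigroup estimates, since $F(u(\cdot))$ and $G(u(\cdot))\omega'(\cdot)$ are bounded (here using that $\omega'$ is bounded on each subinterval of smoothness) and $\int_0^t S(t-s)h(s)\,ds$ is $\beta$-Hölder for any $\beta<1$ when $h$ is bounded and $V$-valued — indeed it is in $C_\gamma([0,T];V_\gamma)$ for any $\gamma<1$. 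Thus $\Phi$ maps $C_\beta([0,T];V)$ into itself.

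Next I would establish the contraction estimate on a short interval $[0,\tau]$. Using the Lipschitz continuity of $F$ and $G$ (Lemma~\ref{l2} gives $\|G(u_1)-G(v_1)\|_{L_2(V,\tilde V)}\le c_{DG}|u_1-v_1|$; $F$ is assumed Lipschitz), one gets
\[
|(\Phi u)(t)-(\Phi v)(t)| \le c\int_0^t (t-s)^{-(1-\beta)}\!\cdot\!(\text{const})\cdot\|u-v\|_{\infty,[0,s]}\,ds \le c\,\tau^{\beta}\,\|u-v\|_{\infty,[0,\tau]},
\]
and a similar bound for the Hölder seminorm of the difference, so $\|\Phi u-\Phi v\|_\beta \le C(\tau)\|u-v\|_\beta$ with $C(\tau)\to 0$ as $\tau\to 0$. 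Then I would choose $\tau$ small enough that $C(\tau)<1$, obtaining a unique fixed point on $[0,\tau]$ by the Banach fixed-point theorem on the complete metric space described in Section~\ref{preli} (functions with fixed value $u_0$ at the left endpoint, metrized by the seminorm). Crucially the length $\tau$ depends only on the Lipschitz constants and the semigroup constants, not on the size of the initial datum (because $F,G$ are globally Lipschitz and $G$ is bounded), so the local solution can be continued by iterating on $[\tau,2\tau],[2\tau,3\tau],\dots$ and concatenating, yielding a global solution on $[0,T]$. Uniqueness globally follows from local uniqueness plus a standard connectedness/Gronwall argument. Continuous dependence on $u_0\in V_\delta$ is obtained the same way: subtract the variation-of-constants formulas for two initial data, estimate as above, and close with Gronwall's inequality over $[0,T]$, picking up the extra term $|S(t)(u_0-\tilde u_0)|\le c t^{-(\delta-\beta)}|u_0-\tilde u_0|_{V_\delta}$.

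Finally, the regularity claim $u\in C_\beta([0,T];V)$ for $\beta\le\delta$ is really built into the fixed-point space, but I would also note the bootstrap: once a bounded continuous solution $u$ exists, the worst term for Hölder regularity is $S(\cdot)u_0$, which forces the restriction $\beta\le\delta$, while the two integral terms are automatically in $C_\beta$ for every $\beta<1$. The main obstacle is not conceptual — this is the classical Pazy-type argument, hence the citation \cite{Pazy} — but lies in being careful that the Bochner integral $\int_0^t S(t-s)G(u(s))\,d\omega(s)$ is well-defined across the finitely many points where $\omega'$ fails to exist or is discontinuous; one handles this by splitting $[0,t]$ at those points and using that $\omega$ is still continuous and piecewise $C^1$, so each piece contributes a genuine Bochner integral of a bounded integrand, and the pieces add up. A secondary technical point is checking the integrability of $(t-s)^{-(1-\beta)}$-type singularities near $s=t$ and of $s^{-(\delta-\beta)}$ near $s=0$, both of which are fine precisely because $\beta<1$ and $\beta\le\delta$.
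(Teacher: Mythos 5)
Your argument is the classical Picard/Banach fixed-point proof for mild solutions with an analytic semigroup, which is precisely what the paper invokes by citing \cite{Pazy} without giving its own proof; the mapping properties, the short-interval contraction with globally Lipschitz $F,G$, the concatenation to a global solution, and the Gronwall argument for continuous dependence are all correct, and the restriction $\beta\le\delta$ indeed comes only from the term $S(\cdot)u_0$. No gaps.
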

If in addition we assume that $G$ satisfies the conditions of Lemma \ref{l2},
similarly to the expression (\ref{eq24}) of last section,
we can rewrite, for $\alpha \in (0,1)$, the above equation as
    \begin{align}\label{sol}
\begin{split}
    u(t)&=S(t)u_0+(-1)^\alpha\int_0^t\hat D_{0+}^\alpha
    (S(t-\cdot)G(u(\cdot)))[r]D_{t-}^{1-\alpha}\omega_{t-}[r]dr\\
&-(-1)^{2\alpha-1}\int_0^t
    {D}_{0+}^{2\alpha-1}(S(t-\cdot)DG(u(\cdot)))[r]D_{t-}^{1-\alpha}\dD_{t-}^{1-\alpha}(u\otimes\omega)[r]dr,
 \end{split}
\end{align}
where the compensated fractional derivative is defined as in
\eqref{compensated} and the fractional derivative of
$(u\otimes \omega)$ is defined as in \eqref{vfd}.

 We want to point out that, when the noise is not
regular, as we will show in the following section, in order to
give a meaningful definition of the solution of \eqref{eqn1} we
need an equation to determine the corresponding counterpart of
$(u\otimes\omega)$. For this reason, now
we aim at getting such expression in the case that $\omega$ is
smooth. Firstly, choosing $\zeta=\omega$ in \eqref{eq11}, we can
express the tensor as follows
\begin{align}\label{ns}
\begin{split}
    (u\otimes\omega)(s,t)&=\int_s^t(S(\xi-s)-{\rm id})u(s)\otimes_V\omega^\prime(\xi)d\xi+\int_s^t\int_s^\xi S(\xi-r)G(u(r))\omega^\prime(r)dr\otimes_V\omega^\prime(\xi)d\xi.
    \end{split}
\end{align}

Fix $\delta\in [0,1]$ and let $\beta^\prime \in (1/3,1/2)$. For $\alpha\in (0,1)$, $0 \leq s\leq t \leq T$ and the semigroup
$S$ introduced in Section \ref{preli}, we consider
\begin{align*}
    \omega_S(s,t)\cdot=(-1)^{-\alpha}\int_s^t(S(\xi-s)\cdot)\otimes_V\omega^\prime(\xi)d\xi
\end{align*}
on $\Delta_{0,T}$ as a linear mapping from $V$ into $V\otimes V$. It can be also easily seen that there exists a $c\geq 0$ such that for any
$e\in V$
\begin{align*}
\|\omega_S(s,t)e-\omega_S(r,t)e\|\leq c
(r-s)^{\beta^\prime}|e|,\quad 0\leq s\leq r\leq t.
\end{align*}
We consider $(\omega \otimes_S \omega): \Delta_{0,T}\times
L_2(V,V_\delta) \rightarrow V\otimes V$ given by
\begin{align}\label{omegaSS}
\begin{split}
    E(\omega&\otimes_S\omega)(s,t)=\int_s^t\int_s^\xi S(\xi-r)E\omega^\prime(r)dr\otimes_V\omega^\prime(\xi)d\xi\\
&=\int_s^t\int_r^t
    S(\xi-r)E\omega^\prime(r)\otimes_V\omega^\prime(\xi)d\xi dr=(-1)^\alpha\int_s^t\omega_S(r,t)E(\omega^\prime(r))dr.
    \end{split}
\end{align}

Let us recall that $L_2(V,V_\delta)$ is a separable Hilbert space with an orthonormal basis $(E_{ij})_{i,j\in\NN}$ derived from the basis $(e_k)_{k\in \NN}$ of $V$ and $({e_k}/{\lambda_k^\delta})_{k\in \NN}$ of $V_\delta$ as follows:
\begin{equation} \label{etiqueta}   E_{ij}e_k=\left\{\begin{array}{lcl}
    0&:& j\not= k\\
    \frac{e_i}{\lambda_i^\delta}&:& j= k.
    \end{array}
    \right.
\end{equation}
Suppose that
\begin{equation}\label{finitesum}
    \sum_{i=1}^\infty \lambda_i^{-1-2\delta}<\infty.
\end{equation}
Then for the smooth path $\omega$ we have
\begin{align*}
    \|E_{ij}(\omega\otimes_S\omega)(s,t)\|^2&=\bigg\|\int_s^t\int_r^tS(\xi-r)E_{ij}\omega^\prime(r)\otimes_V\omega^\prime(\xi)d\xi dr\bigg\|^2\\
    &=\bigg\|\int_s^t\int_r^te^{-\lambda_i(\xi-r)}\frac{e_i}{\lambda_i^\delta}(\omega^\prime(r),e_j)\otimes_V\omega^\prime(\xi) d\xi dr\bigg\|^2,\\
    \sum_{i,j}\|E_{ij}(\omega\otimes_S\omega)(s,t)\|^2&\le \bigg(\int_s^t |\omega^\prime(\xi)|^2d\xi\bigg)^2\sum_i\frac{1}{2\lambda_i^{1+2\delta}}\int_s^t(1-e^{-2\lambda_i(t-r)})dr<\infty.
\end{align*}
In particular, since $\omega$ is smooth we can conclude that $(\omega\otimes_S\omega)\in C_{2\beta^\prime}(\Delta_{0,T};L_2(L_2(V,V_\delta),V\otimes V)).$ Indeed, from the above inequality, there exists $c$  such that (at least)
\begin{align*}
     \sum_{i,j}\|E_{ij}(\omega\otimes_S\omega)(s,t)\|^2&\le c (t-s)^{4\beta^\prime}.
     \end{align*}
In addition, the following equality is interpreted to be the Chen equality for $(\omega\otimes_S \omega)$:  for $0\leq s\leq
r\leq t \leq T$ it follows
\begin{align}\label{chenomegaS}
\begin{split}
E(\omega \otimes_S \omega) &(s,r) +E(\omega\otimes_S
\omega)(r,t) \\
+&\int_r^t S(\xi-r) \int_s^r S(r-q) E\omega^\prime(q) dq \otimes_V
\omega^\prime(\xi)d\xi =E(\omega\otimes_S \omega)(s,t).
\end{split}
\end{align}
If now we take into account \eqref{omegaSS}, the last integral
on the right hand side of \eqref{ns} can be written as
\begin{align}\label{q5}
\begin{split}
 - \int_s^t G(u(r))& D_1(\omega\otimes_S\omega)(r,t) dr\\
  &= -\int_s^t    (G(u(r))-DG(u(r))(u(r)-u(s),\cdot))  D_1(\omega\otimes_S\omega)(r,t)
dr\\
&-\int_s^t
    DG(u(r))(u(r)-u(s),\cdot) D_1(\omega\otimes_S\omega)(r,t) dr\\
    &=-(-1)^\alpha\int_s^t \hat D_{s+}^\alpha G(u(\cdot))[r] D_{t-}^{1-\alpha}(\omega\otimes_S\omega)(\cdot,t)_{t-}[r]dr \\
&+(-1)^\alpha\int_s^t D_{s+}^\alpha DG(u(\cdot))(u(r)-u(s),\cdot)[r] D_{t-}^{1-\alpha}(\omega\otimes_S\omega)(\cdot,t)_{t-}[r] dr\\
&-(-1)^\alpha\int_s^t D_{s+}^\alpha
    DG(u(\cdot))[r]D_{t-}^{1-\alpha}(u\otimes(\omega\otimes_S\omega)(\cdot,t))(s,\cdot)_{t-}[r]dr.
    \end{split}
\end{align}
Notice that in the previous expression we have used \eqref{eq10bis} and \eqref{rel}. Moreover, for $\tilde E\in L_2(V\times V,V_\delta)$, we set
\begin{align}\label{eq33}
\begin{split}
    \tilde
    E(u\otimes(\omega\otimes_S\omega))(\cdot,t)(s,r)&=\int_s^r
   \tilde E (u(q)-u(s),\cdot)D_1(\omega\otimes_S\omega)(q,t)  dq\\
&=-\int_s^r\int_q^tS(\xi-q)\tilde E (u(q)-u(s),\omega^\prime (q))\otimes_V\omega^\prime(\xi)d\xi
    dq\in V\otimes V
    \end{split}
\end{align}
such that
\begin{align*}
&\tilde E D_2(u\otimes(\omega\otimes_S\omega))(\cdot,t)(s,\cdot)[r]=\tilde E (u(r)-u(s),\cdot)D_1(\omega\otimes_S\omega)(r,t)
   ,
\end{align*}
which gives us the last integral on the right-hand side of
\eqref{q5}, where $D_1$ and $D_2$ denote, respectively, the derivative with respect to the first and second component of the corresponding tensor. From now on we write
$(u\otimes(\omega\otimes_S\omega))(t)$ instead of
$(u\otimes(\omega\otimes_S\omega))(\cdot,t)$.

Consider the separable Hilbert space $L_2(V\times V,V_\delta)$ equipped with the complete orthonormal basis
$(\tilde E_{ijk})_{i,j,k\in \NN}$
\begin{equation*}
     \tilde E_{ijk}e_l\otimes_Ve_{m}=\tilde E_{ijk}(e_l,e_{m})=\left\{\begin{array}{lcl}
    0&:& j\not= l\;\text{ or } k\not= m\\
    \frac{e_i}{\lambda_i^\delta} &:& j= l\;\text{ and }  k=m.
    \end{array}
    \right.
\end{equation*}
Under the assumption \eqref{finitesum}, similarly to the above estimate for $(\omega \otimes_S \omega)$, we have that

\begin{align*}
    \sum_{i,j,k}&\bigg\| \int_s^r\int_q^tS(\xi-q)\tilde E_{ijk} (u(q)-u(s),\omega^\prime (q))\otimes_V\omega^\prime(\xi)d\xi
    dq\bigg\|^2\\
&\le\sum_i\frac{1}{2\lambda_i^{1+2\delta}}\int_s^t(1-e^{-2\lambda_i(t-q)})dq \int_s^t |\omega^\prime(\xi)|^2d\xi\int_s^t|u(q)-u(s)|^2|\omega^\prime(q)|^2dq <\infty,
\end{align*}
which shows in particular that

\begin{equation*}
    (u\otimes(\omega\otimes_S\omega))\in C_{\beta+\beta^\prime}(\Delta_{0,T};L_2(L_2(V\times V,V_\delta),V\otimes V)).
\end{equation*}

\begin{lemma} Suppose that \eqref{finitesum} holds. For $0\leq s\leq t\leq T$, $(u \otimes \omega)$ satisfies the equation
\begin{align}\label{uomega}
\begin{split}
    (u\otimes\omega)(s,t)&=\int_s^t(S(\xi-s)-{\rm
    id})u(s)\otimes_V\omega^\prime(\xi)d\xi\\
    &-(-1)^\alpha\int_s^t \hat D_{s+}^\alpha
    G(u(\cdot))[r]D_{t-}^{1-\alpha}(\omega\otimes_S\omega)(\cdot,t)_{t-}[r] dr \\
&+(-1)^{2\alpha-1}\int_s^tD_{s+}^{2\alpha-1} DG(u(\cdot))[r] D_{t-}^{1-\alpha}\dD_{t-}^{1-\alpha}(u\otimes(\omega\otimes_S\omega)(t))[r]dr .
\end{split}
\end{align}
\end{lemma}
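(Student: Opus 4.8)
The plan is to derive \eqref{uomega} starting from the representation \eqref{ns} of $(u\otimes\omega)$ established above (obtained by choosing $\zeta=\omega$ in \eqref{eq11} and inserting the variation of constants formula \eqref{eq0} for the smooth solution $u$, then interchanging the order of the resulting absolutely convergent double integrals over the compact triangle). The first two summands on the right hand side of \eqref{ns} coincide verbatim with the first two summands of \eqref{uomega}, so the whole task reduces to rewriting the third summand
\[
I(s,t):=\int_s^t\int_s^\xi S(\xi-r)G(u(r))\omega^\prime(r)\,dr\otimes_V\omega^\prime(\xi)\,d\xi.
\]

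First I would re-express $I(s,t)$ through $(\omega\otimes_S\omega)$. Using Fubini and comparing with the definition \eqref{omegaSS} together with $\omega_S(s,t)\cdot=(-1)^{-\alpha}\int_s^t(S(\xi-s)\cdot)\otimes_V\omega^\prime(\xi)\,d\xi$, one computes that the derivative in the first variable satisfies $D_1(\omega\otimes_S\omega)(r,t)E=-\int_r^t S(\xi-r)E\,\omega^\prime(r)\otimes_V\omega^\prime(\xi)\,d\xi$ for $E\in L_2(V,V_\delta)$, whence $I(s,t)=-\int_s^t G(u(r))\,D_1(\omega\otimes_S\omega)(r,t)\,dr$. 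Thus $I$ is a pathwise (rough) integral of $G(u)$ against the two-form $(\omega\otimes_S\omega)(\cdot,t)$, whose associated area is, by construction, $(u\otimes(\omega\otimes_S\omega)(t))$. The heart of the proof is then to apply to this integral the infinite-dimensional, semigroup-twisted counterpart of \eqref{eq24}: one splits $G(u(r))=\big(G(u(r))-DG(u(r))(u(r)-u(s),\cdot)\big)+DG(u(r))(u(r)-u(s),\cdot)$, notes that the \emph{compensated} summand is $2\gamma$-H\"older in $r$ by Lemma \ref{l2} while $r\mapsto(\omega\otimes_S\omega)(r,t)$ is $2\beta^\prime$-H\"older and vanishes at $r=t$, so that for $\alpha$ with $2\gamma>\alpha$, $2\beta^\prime>1-\alpha$ (and $\beta^\prime>\beta>1-\alpha$ as in Section \ref{preli}) the fractional integration by parts \eqref{eq10bis} applies; combining this with the relation \eqref{rel} between the ordinary and the compensated fractional derivatives, with \eqref{rel2} (which rewrites $D_{t-}^{1-\alpha}$ of the two-form $(u\otimes(\omega\otimes_S\omega)(\cdot,t))(s,\cdot)_{t-}$ in terms of $\dD_{t-}^{1-\alpha}(u\otimes(\omega\otimes_S\omega)(t))$ and of $(u(r)-u(s))\otimes_V D_{t-}^{1-\alpha}(\omega\otimes_S\omega)(\cdot,t)_{t-}$), and with the additivity of the Weyl fractional derivatives --- exactly the sequence of manipulations that passes from \eqref{intnr} to \eqref{eq24} --- the three integrals of \eqref{q5} collapse and one arrives at
\[
I(s,t)=-(-1)^\alpha\int_s^t \hat D_{s+}^\alpha G(u(\cdot))[r]\,D_{t-}^{1-\alpha}(\omega\otimes_S\omega)(\cdot,t)_{t-}[r]\,dr+(-1)^{2\alpha-1}\int_s^tD_{s+}^{2\alpha-1} DG(u(\cdot))[r]\, D_{t-}^{1-\alpha}\dD_{t-}^{1-\alpha}(u\otimes(\omega\otimes_S\omega)(t))[r]\,dr.
\]
Substituting this back into \eqref{ns} yields \eqref{uomega}.

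At each of these steps the well-definedness of the fractional derivatives must be checked: for the compensated term one uses the $2\gamma$-H\"older bound of Lemma \ref{l2} together with the estimate \eqref{eq28}; for the second term one uses the bound \eqref{eq26} on $D_{t-}^{1-\alpha}\dD_{t-}^{1-\alpha}$ of a two-form in $C_{\beta+\beta^\prime}$ together with the $\gamma$-H\"older continuity of $DG(u(\cdot))$, which is what makes $D_{s+}^{2\alpha-1}DG(u(\cdot))$ meaningful (recall $\gamma+1>2\alpha$). The finiteness assumption \eqref{finitesum} is what guarantees that $(\omega\otimes_S\omega)$ and $(u\otimes(\omega\otimes_S\omega))$ take values in the Hilbert--Schmidt spaces $L_2(L_2(V,V_\delta),V\otimes V)$ and $L_2(L_2(V\times V,V_\delta),V\otimes V)$, as was already verified above, so that the pairings on the right hand side of \eqref{uomega} define elements of $V\otimes V$; the exchange of the infinite sums over the orthonormal bases with the integrals is justified as in \eqref{eq10bis1}.

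The step I expect to be the genuine obstacle is not a single estimate but the bookkeeping of the tensor-valued operators: one must keep track that $D_{s+}^{2\alpha-1}DG(u(\cdot))[r]\in L_2(V\times V,V_\delta)$ is contracted against $D_{t-}^{1-\alpha}\dD_{t-}^{1-\alpha}(u\otimes(\omega\otimes_S\omega)(t))[r]\in L_2(L_2(V\times V,V_\delta),V\otimes V)$ in the way dictated by the defining relation $\tilde E D_2(u\otimes(\omega\otimes_S\omega))(\cdot,t)(s,\cdot)[r]=\tilde E(u(r)-u(s),\cdot)D_1(\omega\otimes_S\omega)(r,t)$, and likewise that the intermediate quantities in \eqref{q5} are contracted consistently, and that the various regularity thresholds governing \eqref{eq10bis}, \eqref{rel} and \eqref{rel2} are compatible on the admissible range of $\alpha$. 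Once this is organised, the computation is the verbatim infinite-dimensional counterpart of the finite-dimensional one in \cite{HuNu09}.
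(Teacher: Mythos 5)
Your strategy coincides with the paper's: keep the first two summands of \eqref{ns}, write the third as $-\int_s^t G(u(r))D_1(\omega\otimes_S\omega)(r,t)\,dr$, split off the compensated part as in \eqref{q5}, and then recombine the last two integrals of \eqref{q5} via a relation of the type \eqref{rel2} followed by the integration by parts \eqref{eq10b}. The gap is precisely in the step you dispatch by citing \eqref{rel2}. That identity is derived for triples coupled by the \emph{standard} Chen equality \eqref{chen}, whereas the objects appearing here satisfy semigroup-twisted Chen equalities: $(\omega\otimes_S\omega)$ satisfies \eqref{chenomegaS}, whose cross term is $(-1)^{-\alpha}\omega_S(r,t)\int_s^r S(r-q)E\,d\omega(q)$ rather than a plain product of increments, and $(u\otimes(\omega\otimes_S\omega)(t))$ satisfies \eqref{3tensorChen}, which for an intermediate point $q<t$ carries the extra correction
\[
\int_r^q\int_q^t S(\xi-\tau)\,\tilde E\bigl(u(r)-u(s),\omega^\prime(\tau)\bigr)\otimes_V\omega^\prime(\xi)\,d\xi\,d\tau .
\]
Because the fractional derivative $D_{t-}^{1-\alpha}(\cdot)(s,\cdot)_{t-}[r]$ integrates over \emph{all} intermediate points $\theta\in(r,t)$, and the correction above vanishes only at $\theta=t$, these twisted terms genuinely enter the computation. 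The identity
\[
\tilde E\,D_{t-}^{1-\alpha}(u\otimes(\omega\otimes_S\omega)(t))(s,\cdot)_{t-}[r]
=-\tilde E\,\dD_{t-}^{1-\alpha}(u\otimes(\omega\otimes_S\omega)(t))[r]
+\tilde E(u(r)-u(s),\cdot)\,D_{t-}^{1-\alpha}(\omega\otimes_S\omega)(\cdot,t)_{t-}[r]
\]
therefore does not follow from \eqref{rel2}; it holds only because the correction produced by \eqref{3tensorChen} cancels exactly against the analogous correction produced by \eqref{chenomegaS} when one expands $\tilde E(u(r)-u(s),\cdot)D_{t-}^{1-\alpha}(\omega\otimes_S\omega)(\cdot,t)_{t-}[r]$. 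Verifying this cancellation is the actual content of the paper's proof of the lemma, and it is the one piece your outline omits.

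Everything else you write --- the reduction of the first two summands, the rewriting of $I(s,t)$ through $D_1(\omega\otimes_S\omega)$, the use of \eqref{rel}, the final passage from $D_{s+}^{\alpha}DG$ paired with $\dD_{t-}^{1-\alpha}$ to $D_{s+}^{2\alpha-1}DG$ paired with $D_{t-}^{1-\alpha}\dD_{t-}^{1-\alpha}$ via \eqref{eq10b}, and the Hilbert--Schmidt bookkeeping under \eqref{finitesum} --- matches the paper. To close the argument you should state and prove the twisted Chen equality \eqref{3tensorChen} from the definition of $(u\otimes(\omega\otimes_S\omega)(t))$, then carry out the two fractional-derivative expansions and exhibit the cancellation of the semigroup correction terms explicitly.
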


\begin{proof} Let us consider $\tilde E\in L_2(V\times V, V_\delta)$. Since, for $0\leq s\leq r\leq
q \leq t\leq T$,
\begin{align*}
-&\int_s^r\int_\tau^tS(\xi-\tau)\tilde E(u(\tau)-u(s),\omega^\prime(\tau))\otimes_V\omega^\prime(\xi)d\xi
    d\tau
   \\ -&
    \int_r^q\int_\tau^tS(\xi-\tau)\tilde E(u(\tau)-u(r),\omega^\prime(\tau))\otimes_V\omega^\prime(\xi)d\xi
    d\tau
    \\-&
    \int_r^q\int_\tau^qS(\xi-\tau)\tilde E(u(r)-u(s),\omega^\prime(\tau)) \otimes_V\omega^\prime(\xi)d\xi
    d\tau\\-&
    \int_r^q\int_q^tS(\xi-\tau)\tilde E(u(r)-u(s),\omega^\prime(\tau)) \otimes_V\omega^\prime(\xi)d\xi
    d\tau\\
    =
    -&\int_s^q\int_\tau^tS(\xi-\tau)\tilde E(u(\tau)-u(s),\omega^\prime(\tau)) \otimes_V\omega^\prime(\xi)d\xi
    d\tau
\end{align*}
we have
\begin{align}\label{3tensorChen}
\begin{split}
\tilde
E & (u\otimes (\omega\otimes_S\omega)(t)) (s,r)+\tilde E(u\otimes(\omega\otimes_S\omega)(t))(r,q)
    -\tilde E(u(r)-u(s),\cdot)(\omega\otimes_S\omega)(r,q) \\
    =&\tilde E(u\otimes (\omega\otimes_S\omega)(t))(s,q)+\int_r^q\int_q^tS(\xi-\tau)\tilde E(u(r)-u(s),\omega^\prime(\tau))\otimes_V\omega^\prime(\xi)d\xi
    d\tau.
    \end{split}
\end{align}
In particular, when $q=t$, we have
\begin{align}\label{chen3fold}
\begin{split}
\tilde
E(u\otimes&(\omega\otimes_S\omega)(t))(s,r)+\tilde E(u\otimes(\omega\otimes_S\omega)(t))(r,t)
    -\tilde E(u(r)-u(s),\cdot)(\omega\otimes_S\omega)(r,t) \\
    =&\tilde E(u\otimes (\omega\otimes_S\omega)(t))(s,t).
    \end{split}
\end{align}
Since $(\omega\otimes_S\omega)(t,t)=0$ the above expression is
exactly the Chen equality for $(u\otimes
(\omega\otimes_S\omega))(t)$:
\begin{align*}
 \tilde E(u & \otimes(\omega\otimes_S\omega)(t))(s,r)+\tilde E(u\otimes(\omega\otimes_S\omega)(t))(r,t)\\
&+ \tilde E(u(r)-u(s),\cdot)((\omega\otimes_S\omega)(t,t)-(\omega\otimes_S\omega)(r,t)) =\tilde E(u\otimes (\omega\otimes_S\omega)(t))(s,t).
\end{align*}
Thanks to \eqref{3tensorChen} we have

\begin{align*}
    \tilde E D_{t-}^{1-\alpha}&(u\otimes (\omega\otimes_S\omega)(t))(s,\cdot)_{t-}[r]\\
&=\frac{(-1)^{1-\alpha}}{\Gamma(\alpha)}\bigg(\frac{\tilde E(u\otimes (\omega\otimes_S\omega)(t))(s,r)-\tilde E (u\otimes (\omega\otimes_S\omega)(t))(s,t)}{(t-r)^{1-\alpha}}\\
&+(1-\alpha)\int_r^t\frac{\tilde E(u\otimes (\omega\otimes_S\omega)(t))(s,r)-\tilde E(u\otimes (\omega\otimes_S\omega)(t))(s,\theta)}{(\theta-r)^{2-\alpha}}d\theta\bigg)\\
&=\frac{(-1)^{1-\alpha}}{\Gamma(\alpha)}\bigg(\frac{-\tilde E(u\otimes (\omega\otimes_S\omega)(t))(r,t)+\tilde E (u(r)-u(s),\cdot)(\omega\otimes_S\omega)(r,t)}{(t-r)^{1-\alpha}}\\
&+(1-\alpha)\int_r^t\frac{-\tilde E(u\otimes (\omega\otimes_S\omega)(t))(r,\theta)+\tilde E (u(r)-u(s),\cdot)(\omega\otimes_S\omega)(r,\theta)}{(\theta-r)^{2-\alpha}}d\theta\\
&+(1-\alpha)\int_r^t\frac{\int_r^\theta \int_\theta^t S(\xi-\tau) \tilde E (u(r)-u(s),\omega^\prime(\tau))\otimes_V \omega^\prime(\xi)
     d\xi d\tau }{(\theta-r)^{2-\alpha}}d\theta \bigg)\\
&=-\tilde E \dD_{t-}^{1-\alpha}(u\otimes(\omega\otimes_S\omega)(t))[r]\\
&+\frac{(-1)^{1-\alpha}}{\Gamma(\alpha)}\bigg(\frac{\tilde E (u(r)-u(s),\cdot)(\omega\otimes_S \omega)(r,t)}{(t-r)^{1-\alpha}}
    +(1-\alpha)\int_r^t\frac{\tilde
    E (u(r)-u(s),\cdot)(\omega\otimes_S \omega)(r,\theta) }{(\theta-r)^{2-\alpha}}d\theta\\
&+(1-\alpha)\int_r^t\frac{\int_r^\theta \int_\theta^t S(\xi-\tau)  \tilde E (u(r)-u(s),\omega^\prime(\tau))\otimes_V \omega^\prime(\xi)
     d\xi d\tau  }{(\theta-r)^{2-\alpha}}d\theta \bigg).
\end{align*}
Furthermore, by \eqref{chenomegaS} we have
    \begin{align*}
 \tilde E D_{t-}^{1-\alpha}&
     (u(r)-u(s),\cdot)(\omega\otimes_S \omega)(\cdot,t)_{t-}[r]=
    \tilde E (u(r)-u(s),\cdot)D_{t-}^{1-\alpha}(\omega\otimes_S \omega)(\cdot,t)_{t-}[r] \\
    &=\frac{(-1)^{1-\alpha}}{\Gamma(\alpha)}\bigg(\frac{\tilde E (u(r)-u(s),\cdot)(\omega\otimes_S \omega)(r,t)}{(t-r)^{1-\alpha}}\\
&+(1-\alpha)\int_r^t\frac{\tilde E (u(r)-u(s),\cdot)(\omega\otimes_S \omega)(r,t)-\tilde E (u(r)-u(s),\cdot)(\omega\otimes_S \omega)(\theta,t)}{(\theta-r)^{2-\alpha}}d\theta\bigg)\\
    &=\frac{(-1)^{1-\alpha}}{\Gamma(\alpha)}\bigg(\frac{\tilde E (u(r)-u(s),\cdot)(\omega\otimes_S \omega)(r,t)}{(t-r)^{1-\alpha}}\\
&+(1-\alpha)\int_r^t\frac{\tilde E (u(r)-u(s),\cdot)(\omega\otimes_S
    \omega)(r,\theta)}{(\theta-r)^{2-\alpha}}d\theta\\
&+(1-\alpha)\int_r^t\frac{\int_r^\theta \int_\theta^t S(\xi-\tau) \tilde E (u(r)-u(s),\omega^\prime(\tau)) \otimes_V \omega^\prime(\xi)
     d\xi d\tau  }{(\theta-r)^{2-\alpha}}d\theta\bigg).
\end{align*}
Plugging the above expression into the previous expression of $\tilde E D_{t-}^{1-\alpha}(u\otimes
(\omega\otimes_S\omega)(t))(s,\cdot)_{t-}[r]$, we obtain
\begin{align*}
 \tilde E D_{t-}^{1-\alpha}&(u\otimes
(\omega\otimes_S\omega)(t))(s,\cdot)_{t-}[r] \\
    =&-\tilde E \dD_{t-}^{1-\alpha}(u\otimes(\omega\otimes_S
    \omega)(t))[r]+\tilde E (u(r)-u(s),\cdot)D_{t-}^{1-\alpha}
    (\omega\otimes_S \omega)(\cdot,t)_{t-}[r] .
\end{align*}
Note that the previous equality shows a similar connection between the fractional derivative and the compensated fractional derivative obtained previously in \eqref{rel2}.

Hence, using \eqref{q5} and the fractional integration by parts formula
\eqref{eq10b}, $(u\otimes\omega)$ satisfies the
equation
\begin{align*}
    (u\otimes\omega)(s,t)&=\int_s^t(S(\xi-s)-{\rm id})u(s)\otimes_V\omega^\prime(\xi)d\xi-(-1)^\alpha\int_s^t \hat D_{s+}^\alpha
    G(u(\cdot))[r] D_{t-}^{1-\alpha}(\omega\otimes_S\omega)(\cdot,t)_{t-}[r]dr\\
&+(-1)^\alpha\int_s^t D_{s+}^\alpha DG(u(\cdot))[r] \dD_{t-}^{1-\alpha}(u\otimes(\omega\otimes_S\omega)(t))[r]dr\\
    &=\int_s^t(S(\xi-s)-{\rm id})u(s)\otimes_V\omega^\prime(\xi)d\xi-(-1)^\alpha\int_s^t \hat D_{s+}^\alpha
    G(u(\cdot))[r] D_{t-}^{1-\alpha}(\omega\otimes_S\omega)(\cdot,t)_{t-}[r]dr\\
&+(-1)^{2\alpha-1}\int_s^t D_{s+}^{2\alpha-1} DG(u(\cdot))[r] D_{t-}^{1-\alpha}\dD_{t-}^{1-\alpha}(u\otimes(\omega\otimes_S\omega)(t))[r]dr
\end{align*}
which completes the proof.
\end{proof}

We note that the last two integrals in \eqref{uomega} are well-defined. In particular, we apply the operator $D_{t-}^{1-\alpha}(\omega\otimes_S\omega)(\cdot,t)_{t-}[r]$,  which is an element of the separable Hilbert space $L_2(L_2(V,V_\delta),V\otimes V)$, to $\hat D_{s+}^\alpha
    G(u(\cdot))[r]$, which is contained in the separable Hilbert space $L_2(V,V_\delta)$. Then we can use the definition of a Hilbert space valued integral of Section \ref{preli}. Similar we can argue for the last integral of \eqref{uomega}.
In the Appendix, we will prove that $(u\otimes \omega)$ given by \eqref{uomega} satisfies the
Chen equality (see Lemma \ref{l8}).\\

Let us now deal with the structure of $(u\otimes(\omega\otimes_S\omega))$.

\begin{lemma}\label{uomom}Suppose \eqref{finitesum} holds.
Let  $\tilde E\in L_2(V\otimes V,V_\delta)$. Then for $0\leq s \leq q\leq  t\leq T$ the expression $\tilde E(u\otimes (\omega\otimes_S\omega)(t))(s,q)$ satisfies the equation
\begin{align}\label{eq131}
\begin{split}
\tilde E(u\otimes (\omega\otimes_S\omega)(t))(s,q)&= -(-1)^\alpha \int_s^q\omega_S(r,t)\tilde E(u(r)-u(s),\omega^\prime(r))dr\\
&=-\int_s^q\hat D_{s+}^{\alpha}\omega_S(\cdot,t)\tilde E(u(\cdot)-u(s),\cdot)[r]  D_{q-}^{1-\alpha}\omega_{q-}[r]dr\\
&+(-1)^{\alpha-1}\int_s^q D_{s+}^{2\alpha-1}\tilde E(u(\cdot)-u(s),\cdot)[r]  D_{q-}^{1-\alpha}\dD_{q-}^{1-\alpha}(\omega_S(t)\otimes\omega)[r]
    dr\\&+(-1)^{\alpha-1}\int_s^q D_{s+}^{2\alpha-1}\omega_S(\cdot,t)[r] \tilde E D_{q-}^{1-\alpha}\dD_{q-}^{1-\alpha}(u\otimes\omega)(t)[r] dr,
\end{split}
\end{align}
where, for  $s\le\tau\le t$ and $E\in L_2(V,V_\delta)$, $(\omega_S(t)\otimes\omega)$ satisfies
\begin{align*}
E(\omega_S(t)&\otimes\omega)(s,\tau)=\int_s^\tau(\omega_S(r,t)-\omega_S(s,t))Ed\omega(r)\\
    =&\int_s^\tau(\omega_S(r,t)-\omega_S(r,\tau))Ed\omega(r)+\int_s^\tau\omega_S(r,\tau)Ed\omega(r)
    -\int_s^\tau\omega_S(s,t)Ed\omega(r)\\
    =&\omega_S(\tau,t)\int_s^\tau S(\tau-r)Ed\omega(r)+(-1)^{-\alpha}E(\omega\otimes_S\omega)(s,\tau)-\omega_S(s,t)E(\omega(\tau)-\omega(s)).
\end{align*}
\end{lemma}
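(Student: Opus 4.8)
The first identity in \eqref{eq131} is only a rewriting of definitions. By the definition of $(u\otimes(\omega\otimes_S\omega))(t)$ recorded just before the statement,
\[
\tilde E(u\otimes(\omega\otimes_S\omega)(t))(s,q)=-\int_s^q\Big(\int_\tau^t S(\xi-\tau)\tilde E(u(\tau)-u(s),\omega'(\tau))\otimes_V\omega'(\xi)\,d\xi\Big)d\tau,
\]
and the inner integral is, by the very definition of $\omega_S$, equal to $(-1)^\alpha\,\omega_S(\tau,t)\big[\tilde E(u(\tau)-u(s),\omega'(\tau))\big]$, which yields $-(-1)^\alpha\int_s^q\omega_S(r,t)\tilde E(u(r)-u(s),\omega'(r))\,dr$. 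One also records, for later use, the two auxiliary equalities displayed for $(\omega_S(t)\otimes\omega)$ in the statement: they follow by splitting $\omega_S(r,t)-\omega_S(s,t)=(\omega_S(r,t)-\omega_S(r,\tau))+\omega_S(r,\tau)-\omega_S(s,t)$, using the semigroup identity $\omega_S(r,t)-\omega_S(r,\tau)=\omega_S(\tau,t)S(\tau-r)$, and recognising $\int_s^\tau\omega_S(r,\tau)E\,d\omega(r)=(-1)^{-\alpha}E(\omega\otimes_S\omega)(s,\tau)$ through \eqref{omegaSS}.

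For the second identity the plan is to read $-(-1)^\alpha\int_s^q\omega_S(r,t)\tilde E(u(r)-u(s),\omega'(r))\,dr$ as the $\omega$-integral $\int_s^q H(r)\,d\omega(r)$ of the operator-valued integrand $H(r):=\omega_S(r,t)\,\tilde E(u(r)-u(s),\cdot)\in L_2(V,V\otimes V)$, and to evaluate it with the compensated fractional integration-by-parts formula \eqref{eq24}, just as in the proof of the preceding lemma and in Hu and Nualart \cite{HuNu09}. The feature that makes this possible is that $H$ is a \emph{product} of two $\omega$-controlled factors: the factor $r\mapsto\omega_S(r,t)$, which by the estimate recorded after the definition of $\omega_S$ is $\beta'$-H\"older and is coupled to $\omega$ through the area two-form $(\omega_S(t)\otimes\omega)$; and the factor $r\mapsto\tilde E(u(r)-u(s),\cdot)$, which is $\beta$-H\"older, vanishes at $r=s$, and by Lemma \ref{l6} together with \eqref{sol} is coupled to $\omega$ through $(u\otimes\omega)$. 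Since $\beta<\beta'<1/2<\alpha$ for the admissible range of $\alpha$ in the compensated setting (one needs $1-\alpha<\beta'$), the integrand $H$ is not H\"older of order greater than $\alpha$, so \eqref{eq29} does not apply and one must use the compensated derivative $\hat D_{s+}^\alpha H$, which is well defined because $2\beta>\alpha$, cf. \eqref{compensated}.

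Carrying out \eqref{eq24} then reduces to a Leibniz-type decomposition of its two ingredients. Up to the overall factor $-(-1)^\alpha$ and the convention $(-1)^{2\alpha}=1$, the principal compensated term $(-1)^\alpha\int_s^q\hat D_{s+}^\alpha H[r]\,D_{q-}^{1-\alpha}\omega_{q-}[r]\,dr$ of \eqref{eq24} produces the first term on the right of \eqref{eq131}, with $\hat D_{s+}^\alpha H[r]$ governed by the product increment $H(r)-H(\rho)=(\omega_S(r,t)-\omega_S(\rho,t))\tilde E(u(r)-u(s),\cdot)+\omega_S(\rho,t)\tilde E(u(r)-u(\rho),\cdot)$. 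Since the bilinear map sending $(\omega_S(\cdot,t),u(\cdot))$ to $H$ splits the correction integral $-(-1)^{2\alpha-1}\int D_{s+}^{2\alpha-1}(\,\cdot\,)\,D_{q-}^{1-\alpha}\dD_{q-}^{1-\alpha}v_H$ of \eqref{eq24} into two pieces, one obtains: the piece coming from the $\omega_S(\cdot,t)$-factor, where the area two-form is $(\omega_S(t)\otimes\omega)$ — with the explicit form recorded above — and the $D_{s+}^{2\alpha-1}$ lands on $\tilde E(u(\cdot)-u(s),\cdot)$, which is the second term of \eqref{eq131}; and the piece coming from the $u(\cdot)$-factor, where the area two-form is $(u\otimes\omega)$, to which $\tilde E$ is applied, and the $D_{s+}^{2\alpha-1}$ lands on $\omega_S(\cdot,t)$, which is the third term of \eqref{eq131}. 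The prefactors match after using $(-1)^{2\alpha}=1$ and $(-1)^{3\alpha-1}=(-1)^{\alpha-1}$, as in \eqref{eq24}.

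I expect the delicate part to be this Leibniz/Chen bookkeeping carried out rigorously for the operator- and tensor-valued objects, together with the verification that every fractional derivative appearing is finite. The analytic inputs are: the estimate $\|\omega_S(s,t)e-\omega_S(r,t)e\|\le c(r-s)^{\beta'}|e|$ for the regularity of the $\omega_S$-factor; the bound \eqref{eq26} for $D_{q-}^{1-\alpha}\dD_{q-}^{1-\alpha}$ of a two-form, applied to $(\omega_S(t)\otimes\omega)$ and to $(u\otimes\omega)$; Remark \ref{sing}, which handles the singular kernel $S(\xi-\tau)$ sitting inside all of these objects; and, for the convergence of the $V\otimes V$-valued series, the Hilbert--Schmidt estimate under \eqref{finitesum}, which is treated exactly as in the bound already used to place $(u\otimes(\omega\otimes_S\omega))$ in $C_{\beta+\beta'}(\Delta_{0,T};L_2(L_2(V\times V,V_\delta),V\otimes V))$. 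Once these estimates are in place, the remaining steps are the same fractional-calculus manipulations — the integration-by-parts formula \eqref{eq10b} and the relations \eqref{rel}, \eqref{rel2}, \eqref{chenomegaS}, \eqref{3tensorChen} — as in the proof of the preceding lemma.
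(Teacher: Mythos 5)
Your proposal is correct and follows essentially the same route as the paper: the paper also rewrites the tensor as $-(-1)^\alpha\int_s^q f_{\tilde E}(\omega_S(r,t),u(r)-u(s))\,\omega'(r)\,dr$ for the bilinear map $f_{\tilde E}(Q,u)=Q(\tilde E(u,\cdot))$, applies the compensated integration-by-parts expansion of Hu--Nualart (Theorem 3.3 of \cite{HuNu09}, i.e.\ the analogue of \eqref{eq24}), and computes $Df_{\tilde E}$ to split the correction term into exactly the two pieces you describe, paired respectively with the areas $(\omega_S(t)\otimes\omega)$ and $(u\otimes\omega)$ via the Chen-type identities. The only cosmetic difference is that the paper packages your ``product of two controlled factors'' as the single map $f_{\tilde E}$ and then differentiates it, which is the same Leibniz bookkeeping.
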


Before proving this lemma, note that the expression of $E(\omega_S(t)\otimes\omega)(s,\tau)$ above is defined in the sense of \eqref{eq11} since $\omega$ is smooth.

\begin{proof}
We define $f_{\tilde E}:L_2(V_\delta,V\otimes V)\times V\to L_2(V,V \otimes
V)$ given by
\[
f_{\tilde E}(Q,u)=Q(\tilde E (u,\cdot) ).
\]
From \eqref{omegaSS} for smooth $\omega$ we have that
\begin{align*}
\tilde E(u\otimes(\omega\otimes_S\omega)(t))(s,q)=&
-(-1)^{\alpha} \int_s^q
\omega_S(r,t) \tilde
E(u(r)-u(s),\omega^\prime(r)) dr \\
=&-(-1)^{\alpha}\int_s^q f_{\tilde E}(\omega_S(r,t),u(r)-u(s))
\omega^\prime(r)dr.
\end{align*}
Following \eqref{rel} or Theorem 3.3 in \cite{HuNu09}, we have
\begin{align}\label{tensor3}
\begin{split}
\int_s^q f_{\tilde E}&(\omega_S(r,t),u(r)-u(s))
\omega^\prime(r)dr\\&=(-1)^\alpha\int_s^q \hat D_{s+}^\alpha
f_{\tilde
E}(\omega_S(\cdot,t),u(\cdot)-u(s))[r]D^{1-\alpha}_{q-}\omega_{q-}[r]dr\\
&-(-1)^\alpha \int_s^q (\omega_S(r,t)-\omega_S(s,t)) D^\alpha_{s+}
\tilde E(u(\cdot)-u(s),\cdot)[r] D^{1-\alpha}_{q-}\omega_{q-}[r] dr\\
&-(-1)^\alpha \int_s^q D^\alpha_{s+} \omega_S
(\cdot,t) [r]\tilde E(u(r)-u(s),\cdot)D^{1-\alpha}_{q-}\omega_{q-}[r] dr\\
& +\int_s^q Df_{\tilde
E}(\omega_S(r,t),u(r)-u(s))(\omega_S(r,t)-\omega_S(s,t),u(r)-u(s))
\omega^\prime(r)dr.
\end{split}
\end{align}
Now we calculate the derivative of $f_{\tilde E}$:
\begin{align}
\label{tensor31}
\begin{split}
Df_{\tilde
E}(\omega_S(r,t),u(r)-u(s))&(\omega_S(r,t)-\omega_S(s,t),u(r)-u(s))\omega^\prime(r)\\
    =&(\omega_S(r,t)-\omega_S(s,t))\tilde
    E(u(r)-u(s),\omega^\prime(r))+\omega_S(r,t)\tilde
    E(u(r)-u(s),\omega^\prime(r))\\
    =&\tilde
    E(u(r)-u(s),\cdot)D_2(\omega_S(t)\otimes\omega)(s,r)+\omega_S(r,t)\tilde E D_2(u\otimes\omega)(s,r).
\end{split}
\end{align}
Substituting the above expression in \eqref{tensor3}, after applying integration by parts to the last two terms, we have to calculate $ED^{1-\alpha}_{q-}(\omega_S(t)\otimes\omega)_{q-}(s,\cdot)[r]$ and $\tilde E D^{1-\alpha}_{q-} (u\otimes\omega)(t)(s,\cdot)_{q-}[r]$. First, we have

\begin{align*}
ED^{1-\alpha}_{q-}(\omega_S(t)&\otimes\omega)_{q-}(s,\cdot)[r]
=D^{1-\alpha}_{q-}E(\omega_S(t)\otimes\omega)_{q-}(s,\cdot)[r]\\
&=\frac{(-1)^{1-\alpha}}{\Gamma(\alpha)}
\bigg(\frac{E(\omega_S(t)\otimes \omega)(s,r)-E(\omega_S(t)\otimes\omega)(s,q)}{(q-r)^{1-\alpha}}\\
&\qquad + (1-\alpha)\int_r^q
\frac{E(\omega_S(t)\otimes\omega)(s,r)-E(\omega_S(t)\otimes\omega)(s,\theta)}{(\theta-r)^{2-\alpha}}
d\theta\bigg)\\
&=-\frac{(-1)^{1-\alpha}}{\Gamma(\alpha)}
\bigg(\frac{E(\omega_S(t)\otimes\omega)(r,q)+(\omega_S(r,t)-\omega_S(s,t))E(\omega(q)-\omega(r))}{(q-r)^{1-\alpha}}\\
&\qquad + (1-\alpha)\int_r^q
\frac{E(\omega_S(t)\otimes\omega)(r,\theta)+(\omega_S(r,t)-\omega_S(s,t))E(\omega(\theta)-\omega(r))}{(\theta-r)^{2-\alpha}}
d\theta\bigg)\\
&= -E{\mathcal D}^{1-\alpha}_{q-} (\omega_S(t)\otimes
\omega))[r]
+(\omega_S(r,t)-\omega_S(s,t))ED^{1-\alpha}_{q-}\omega_{q-}(r).
\end{align*}
Secondly, in a similar way, by \eqref{rel2}, we obtain that
\begin{align*}
\tilde E D^{1-\alpha}_{q-} (u\otimes\omega)(t)(s,\cdot)_{q-}[r]=-\tilde
E{\mathcal
D}^{1-\alpha}_{q-} (u\otimes\omega)(t)[r]-\tilde E(u(r)-u(s),D^{1-\alpha}_{q-}\omega_{q-}(r)),
\end{align*}
and substituting the last two expressions into \eqref{tensor3} we obtain the conclusion.
\end{proof}

\section{Global existence and uniqueness}

We now want to find an appropriate formulation for \eqref{eqn1}. As we said in the Introduction, $\omega$ is a H\"older continuous function of
order $\beta^\prime \in(1/3, 1/2)$, hence the integral with integrator $\omega$ is not well-defined in the classical sense. However, in what follows we will see that the two last terms in \eqref{sol} are well-defined when $\omega$ is $\beta^\prime$-H{\"o}lder continuous. A main difficulty in that point is that for a nonregular path $\omega$ we cannot expect that $(u\otimes \omega)$, which appears in the last term on \eqref{sol}, is well-defined. However, we are able to overcome these problems by formulating the term $(u\otimes \omega)$ by another operator equation. This will be possible thanks to the $2\beta^\prime$- H{\"o}lder continuity of $(\omega\otimes_S\omega)$, as we will explain below.

We now introduce for {\em every} $\beta^\prime$-H\"older continuous path $\omega$ the phase space in which we are looking for
solutions to the problem \eqref{eqn1}:
\begin{align*}
    & W(T_1,T_2)=C_{\beta}([T_1,T_2];V)\times C_{\beta+\beta^\prime}(\Delta_{T_1,T_2};V  \otimes V)
\end{align*}
for $0\leq T_1<T_2$, with seminorm
\begin{align*}
    &|||U|||=\|u\|_{\beta}+\|v\|_{\beta+\beta^\prime},\quad U=(u,v)\in
    W(T_1,T_2),
\end{align*}
and such that the Chen equality holds for $U$, which means that
for $0\leq T_1\le s\le r\le t\le T_2$,
\begin{equation}\label{chenbis}
   v(s,r)+v(r,t)+(u(r)-u(s))\otimes_V(\omega(t)-\omega(r))=
   v(s,t),
\end{equation}
where $\omega$ denotes a fixed $\beta^\prime$--H\"older path with
$\beta^\prime\in (1/3,1/2)$.
Note that, when we consider for $u$ the subset of functions with a fixed value say at $T_1$, the expression $|||U|||$ generates a complete metric, see Section \ref{preli}. For the metric $d_W(U_1,U_2)$ we will write $|||U_1-U_2|||$.

When $u_1(T_1)\not= u_2(T_1)$, $W(T_1,T_2)$ becomes a Banach space if we add $|u(T_1)|$ to  $|||U|||$. However, as we will see below, see Remark \ref{ic}, it suffices to work with the seminorm $|||\cdot|||$ as we have already defined.

Recall that the spaces appearing in
the definition of $W(T_1,T_2)$ as well as the corresponding norms were also introduced in Section \ref{preli}.\\

In order to establish the existence and uniqueness of solutions to  \eqref{eqn1} we set the following Hypothesis ${\bf H}$:

\begin{enumerate}

\item Assume that $S$ is an analytic semigroup with generator $A$ on  the separable Hilbert space $V$.
We assume that $A$ is a negative operator which generates a complete basis of eigenelements
$(e_i)_{i\in\mathbb{N}}$ of $V$. For the associated spectrum $(\lambda_i)_{i\in \mathbb N}$ of $A$ suppose that
\begin{equation}\label{finitesum2}
    \sum_i\lambda_i^{-2\delta}<\infty,
\end{equation}
where $\delta\in [0,1]$ is an appropriate parameter which rank value will be determined later.

Assume $G:V\mapsto L_2(V, V_\delta)$ is a three times Fr\'echet-differentiable mapping, bounded and with bounded derivatives as in Lemma \ref{l3}, such that $DG(\cdot)\in L_2(V\otimes V,V_\delta)$.

\item
Suppose that $1/3< H \le 1/2$ and $1/3<\beta<H$. Suppose that there is an $\alpha$ such that $1-\beta< \alpha <2 \beta$,
$\alpha< \frac{\beta+1}{2}$.

\item Let $\omega\in C_{\beta^\prime}([T_1,T_2];V)$ for any $\beta<\beta^\prime<H$.

\item
Let $(\omega^n)_{n\in\mathbb{N}}$ be a sequence  of piecewise smooth functions
with values in $V$ such that
$((\omega^n\otimes_S\omega^n))_{n\in\mathbb{N}}$ is defined by
\eqref{omegaSS}. Assume then that for any $\beta^\prime<H$ the sequence
$((\omega^n,(\omega^n\otimes_S\omega^n)))_{n\in\mathbb{N}}$ converges to
$(\omega,(\omega\otimes_S\omega))$ in
$C_{\beta^\prime}([T_1,T_2];V)\times C_{2\beta^\prime} (\Delta_{T_1,T_2}; L_2(L_2(V,V_\delta),V\otimes V))$, where $\omega\otimes_S\omega$ is {\em defined} to be this limit.

\end{enumerate}

\begin{remark}\label{rex1}
(i) In item (1) above note that the negativeness of the operator
$A$ is not a restriction, since otherwise we could consider
$A-c\, \rm{id}$ instead of $A$, provided that the $dt$-nonlinearity $c\, \rm{id}$ would be also added into the equation, being $c$ a positive constant.

\smallskip

(ii) As a consequence of Section \ref{regular} and item (4), in addition to $(\omega \otimes_S \omega) \in C_{2\beta^\prime} (\Delta_{T_1,T_2}; L_2(L_2(V,V_\delta),V\otimes V))$, the Chen equality holds in the following way: for $E\in L_2(V,V_\delta),\,T_1\le s\le r\le t\le T_2$ we have
\begin{align*}\begin{split}
&E(\omega\otimes_S \omega)(s,r) +E(\omega\otimes_S
\omega)(r,t) +(-1)^{-\alpha} \omega_S(r,t)\int_s^rS(r-q)Ed\omega(q)=E(\omega\otimes_S \omega)(s,t) .
\end{split}
\end{align*}
The integrals $\int_s^rS(r-q)Ed\omega(q)$ and
$\omega_S(r,t)\cdot =(-1)^{\alpha}\int_r^tS(\xi-r)\cdot \otimes_V
d\omega(\xi)$ are well defined in the Weyl sense if $S$ and $\omega$
satisfy the above properties, see
Lemma \ref{lex7} below.
\smallskip

(iii) We have  that
\begin{align}\label{q1}
    |D_{t-}^{1-\alpha}\omega_{t-}[r]|\le
    c\|\omega\|_{\beta^\prime}(t-r)^{\alpha+\beta^\prime-1},
\end{align}
which follows easily from the H{\"o}lder condition on $\omega$, item (3) above.

\smallskip

(iv) To interpret \eqref{eqn1} as a stochastic partial
differential equation we can assume that $\omega$ is given by a
fractional Brownian motion with Hurst parameter $H\in (1/3,1/2]$, see Section \ref{fBm} for a detailed description of this case.

\end{remark}

Now we can define what is understood as a solution to \eqref{eqn1}. For the sake of brevity, we consider solutions only on the interval $[0,1]$.

\begin{definition}\label{deff}
Under the Hypothesis {\bf H}, assuming that $u_0\in V$, a mild solution of \eqref{eqn1} is a pair $U=(u,v)\in W(0,1)$ satisfying
\begin{align}\label{equ1}
\begin{split}
    u(t)&=S(t)u_0+(-1)^\alpha\int_0^t\hat D_{0+}^\alpha
    (S(t-\cdot)G(u(\cdot)))[r]D_{t-}^{1-\alpha}\omega_{t-}[r]dr\\
    &-(-1)^{2\alpha-1}\int_0^t {D}_{0+}^{2\alpha-1}
    (S(t-\cdot)DG(u(\cdot)))[r]D_{t-}^{1-\alpha}\dD_{t-}^{1-\alpha}v[r] dr,
 \end{split}
\end{align}
\begin{align}\label{equ2}
\begin{split}
    v(s,t)&=\int_s^t(S(\xi-s)-{\rm id})u(s)\otimes_Vd\omega(\xi)\\
    &-(-1)^\alpha\int_s^t \hat D_{s+}^\alpha
    G(u(\cdot))[r]  D_{t-}^{1-\alpha}(\omega\otimes_S\omega)(\cdot,t)_{t-}[r] dr\\
    &+(-1)^{2\alpha-1}\int_s^t D_{s+}^{2\alpha-1}
    DG(u(\cdot))[r] D_{t-}^{1-\alpha}\dD_{t-}^{1-\alpha}(u\otimes(\omega\otimes_S\omega)(t))[r] dr,
\end{split}
\end{align}
for $0\leq s<t\leq 1$. The term $(u\otimes (\omega\otimes_S\omega)(t))(s,t)$
can be defined by the right-hand side of \eqref{eq131} where we  replace $(u\otimes \omega)$ by $v$, that is, for $\tilde E\in L_2(V\otimes V, V_\delta)$,
\begin{align}\label{eq131bis}
\begin{split}
\tilde E(u\otimes (\omega\otimes_S\omega)(t))(s,q)&=-\int_s^q\hat D_{s+}^{\alpha}\omega_S(\cdot,t)\tilde E(u(\cdot)-u(s),\cdot)[r]  D_{q-}^{1-\alpha}\omega_{q-}[r]dr\\
&+(-1)^{\alpha-1}\int_s^q D_{s+}^{2\alpha-1}\tilde E(u(\cdot)-u(s),\cdot)[r]  D_{q-}^{1-\alpha}\dD_{q-}^{1-\alpha}(\omega_S(t)\otimes\omega)[r]
    dr\\&+(-1)^{\alpha-1}\int_s^q D_{s+}^{2\alpha-1}\omega_S(\cdot,t)[r] \tilde E D_{q-}^{1-\alpha}\dD_{q-}^{1-\alpha}v[r] dr,
\end{split}
\end{align}
where $(\omega_S(t)\otimes\omega)$ is defined for  $s\le\tau\le t,\,E\in L_2(V,V_\delta)$ as
\begin{align}\label{xxxeqbis}
\begin{split}
E(\omega_S(t)\otimes\omega)(s,\tau)=&\omega_S(\tau,t)\int_s^\tau S(\tau-r)Ed\omega(r)+(-1)^{-\alpha}E(\omega\otimes_S\omega)(s,\tau)-\omega_S(s,t)E(\omega(\tau)-\omega(s))\\
 =&\omega_S(\tau,t)\int_s^\tau (S(\tau-r)-{\rm id})Ed\omega(r)+(-1)^{-\alpha}E(\omega\otimes_S\omega)(s,\tau)\\
 &-(\omega_S(s,t)-\omega_S(\tau,t))E(\omega(\tau)-\omega(s)).
\end{split}
\end{align}

\end{definition}

\begin{remark}
(i) The notation of the tensor given by \eqref{eq131bis} has been inherited by its analogous counterpart in the finite-dimensional case, see \cite{HuNu09}. However, we would like to point that, despite its name, the tensor $u\otimes (\omega\otimes_S\omega)$ depends on $v$ as can be seen in its definition.
\smallskip

(ii) We want to emphasize that $v$ from this section on is an own standing variable and not a function of $u$ and $\omega$, as it happens in Section \ref{regular}.
\smallskip

(iii) The above definition of the solution does not depend of the value of $\alpha$.

\end{remark}
We denote  the right-hand side of the system \eqref{equ1}-\eqref{equ2} by  ${\mathcal T}(U)=({\mathcal
T}_1(U),{\mathcal T}_2(U))$. For the sum of both integrals
in \eqref{equ1} over an interval $[s,t]$ we use the abbreviation
\begin{equation*}
    \int_s^tS(t-r)G(u(r))d\omega(r).
\end{equation*}
Observe that in the previous definition, the equations for the
second component $v$ as well as $(u\otimes
(\omega\otimes_S\omega))$ can be regarded as the generalizations
to the non-regular case of the equations we have in the regular
case for $(u\otimes \omega)$ and $(u\otimes
(\omega\otimes_S\omega))$ (see Section \ref{regular}). Due to
Lemma \ref{l8}  it is therefore natural to assume that $v$
given by \eqref{equ2} satisfies \eqref{chenbis}, and which is explicitly required when saying that the pair $U=(u,v) \in W(0,1)$.

\begin{remark}\label{exr2}
We have defined the solution with  respect to the time interval
$[0,1]$. Similarly we can define a solution  on $[T_1,T_2]$, $0<T_1<T_2$, when the
initial condition is given at $T_1$ and the space
for the solution is $W(T_1,T_2)$.
\end{remark}

At this point, it is crucial to stress that it is possible to give sense to all integrals appearing in our definition of solution, namely, to every integral in \eqref{equ1}-\eqref{equ2}. As it was shown in Section \ref{preli} and Section \ref{regular}, in order to do that it is necessary to make use of the $L_2$ spaces of Hilbert--Schmidt operators. Note that in \eqref{equ1}, the operators $\hat D_{0+}^\alpha
    (S(t-\cdot)G(u(\cdot)))[r]$ and ${D}_{0+}^{2\alpha-1}
    (S(t-\cdot)DG(u(\cdot)))[r]$ are applied, respectively, to an element in $V$ and $V\otimes V$. However, in \eqref{equ2} this is different. There the operators $(\omega\otimes_S \omega)$ and $(u\otimes(\omega\otimes _S\omega))$, considered to be contained in $L_2(L_2(V,V_\delta), V\otimes V)$ and $L_2(L_2(V\otimes V, V_\delta), V\otimes V)$, are applied to the elements $\hat D_{s+}^\alpha G(u(\cdot))[r]$ and ${D}_{s+}^{2\alpha-1} DG(u(\cdot))[r]$. We stress that this last regularity property of the tensor $(u\otimes (\omega \otimes_S \omega))$ is not part of the Hypothesis {\bf H} and will be analyzed in Lemma \ref{l7}.

Now we establish some properties of
$\omega_S$, which are consequences of \eqref{eq10bis} due to
the regularity of the semigroup.

\begin{lemma}\label{lex7}
Under the Hypothesis ${\bf H}$ the following statements hold:

(i) For $0\le s\le r\le t\le 1,\,e\in V$ and $1/3<\beta^\prime <\beta^{\prime\prime}<H$ we have that
\begin{align*}
\|\omega_S(r,t)e-\omega_S(s,t)e\|& \le c|r-s|^{\beta^\prime}(\|\omega\|_{\beta^\prime}+\|\omega\|_{\beta^{\prime\prime}})|e|,\\
\|\omega_S(s,t)e\|&\le c|t-s|^{\beta^\prime}\|\omega\|_{\beta^{\prime}} |e|,\\
\|\omega_S(s,t)(-A)^{\beta^\prime} e\|&\le c|e|.
\end{align*}

(ii) The mapping
\begin{equation*}
    E\in L_2(V,V_\delta) \mapsto I(E):=  \int_s^t S(t-r)Ed\omega(r)
\end{equation*}
is in $L_2(L_2(V,V_\delta),V)$, with norm bounded by $c\|\omega\|_{\beta^\prime}(t-s)^{\beta^\prime}\|E\|_{L_2(V,V_\delta)}$.

\end{lemma}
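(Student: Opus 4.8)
The plan is to derive both statements from the fractional integration‑by‑parts representations \eqref{eq10bis} and \eqref{eq10bi}, together with the analytic‑semigroup bounds of Lemma \ref{l0}, \eqref{eq1}, \eqref{eq2} and the elementary fractional‑derivative estimate \eqref{eq28}. I would treat the second bound in (i) first, then the increment bound, and finally (ii).

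\emph{Second inequality in (i).} Since $\xi\mapsto S(\xi-s)e$ is continuous at $s$ with limit $e$ and smooth on $(s,t]$, one has (up to a unimodular factor, in the sense of Remark \ref{sing}) $\omega_S(s,t)e=\int_s^tD_{s+}^\alpha(S(\cdot-s)e)[\xi]\otimes_V D_{t-}^{1-\alpha}\omega_{t-}[\xi]\,d\xi$. Using $\|S(\xi-s)e-S(q-s)e\|=\|(S(\xi-q)-{\rm id})S(q-s)e\|\le c(\xi-q)^\theta(q-s)^{-\theta}|e|$ with $\alpha<\theta<1$ (from \eqref{eq1}, \eqref{eq2} and boundedness of $S$), and splitting the defining integral of $D_{s+}^\alpha(S(\cdot-s)e)[\xi]$ at the midpoint, one gets $\|D_{s+}^\alpha(S(\cdot-s)e)[\xi]\|\le c(\xi-s)^{-\alpha}|e|$; combining this with $|D_{t-}^{1-\alpha}\omega_{t-}[\xi]|\le c\|\omega\|_{\beta'}(t-\xi)^{\alpha+\beta'-1}$ (the $\beta'$‑version of \eqref{q1}) and one Beta integral yields $\|\omega_S(s,t)e\|\le c(t-s)^{\beta'}\|\omega\|_{\beta'}|e|$.

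\emph{First inequality in (i).} I would start from the identity $\omega_S(s,t)e=\omega_S(s,r)e+\omega_S(r,t)S(r-s)e$ (additivity of the integral and $S(\xi-s)=S(\xi-r)S(r-s)$), so that $\omega_S(r,t)e-\omega_S(s,t)e=-\,\omega_S(s,r)e+c_\alpha\int_r^t g(\xi)\otimes_V d\omega(\xi)$ with $g(\xi):=(S(\xi-r)-S(\xi-s))e$ and $|c_\alpha|=1$; the first term is bounded by the second inequality in (i). For the remaining integral the key tool is the Lemma \ref{l0}–type operator bound $\|S(\xi-r)-S(\xi-s)\|_{L(V,V_\gamma)}\le c(r-s)^{a}(\xi-r)^{-a-\gamma}$, valid for any $a\in(0,1]$, $\gamma\in[0,1)$ and hence uniform in $e$. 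The obstacle is that a single fractional integration by parts over $[r,t]$ only produces the exponent $1-\alpha<\beta'$, since $D_{r+}^\alpha g[\xi]$ is of order $(\xi-r)^{-\alpha}|e|$ near $\xi=r$ while $D_{t-}^{1-\alpha}\omega_{t-}[\xi]$ is of order one there. I would therefore split $[r,t]$ into the $N\sim(t-r)/(r-s)$ consecutive intervals $I_k=[r_k,r_{k+1}]$, $r_k:=r+k(r-s)$, and apply \eqref{eq10bi} on each $I_k$ separately; there the companion factor $D_{r_{k+1}-}^{1-\alpha}\omega_{r_{k+1}-}$ has size $(r-s)^{\alpha+\beta'-1}$, which absorbs the $(\xi-r_k)^{-\alpha}$ singularity of the fractional derivative. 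Using $\|g(\xi)-g(r_k)\|\le c(\xi-r_k)^b(r-s)^{a}(k(r-s))^{-a-b}|e|$ and $\|g(r_k)\|\le c\,k^{-1}|e|$ (the cases $a=1$, $b\in(\alpha,1)$ of the operator bound), one checks that the ``regular'' part of $\int_{I_k}g\,d\omega$ contributes $\le c\,k^{-1-b}(r-s)^{\beta'}|e|\|\omega\|_{\beta'}$ and the boundary term $g(r_k)\otimes_V(\omega(r_{k+1})-\omega(r_k))$ contributes $\le c\,k^{-1}(r-s)^{\beta''}|e|\|\omega\|_{\beta''}$ (the piece $I_0$ handled separately using $\|g(r+)\|\le c|e|$). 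Summing over $k$, the first series converges, while the second yields a factor $\log\!\big((t-r)/(r-s)\big)$; this logarithm is absorbed into $(r-s)^{\beta'}$ by the gain $\beta''-\beta'>0$, which is precisely why both $\|\omega\|_{\beta'}$ and $\|\omega\|_{\beta''}$ appear. I expect this multiscale estimate to be the main technical point.

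\emph{Part (ii).} Expanding in the spectral basis and using $S(t-r)e_i=e^{-\lambda_i(t-r)}e_i$ and $E_{ij}e_k=\delta_{jk}\,e_i/\lambda_i^\delta$, one gets $I(E_{ij})=\lambda_i^{-\delta}\,e_i\int_s^t e^{-\lambda_i(t-r)}\,d\omega_j(r)$, whence $\sum_{i,j}|I(E_{ij})|_V^2=\sum_i\lambda_i^{-2\delta}\big|\int_s^t e^{-\lambda_i(t-r)}\,d\omega(r)\big|_V^2$. A Young integration by parts gives $\int_s^t e^{-\lambda(t-r)}d\omega(r)=e^{-\lambda(t-s)}(\omega(t)-\omega(s))+\lambda\int_s^t e^{-\lambda(t-r)}(\omega(t)-\omega(r))\,dr$, and $1-e^{-x}\le\min(x,1)$ then yields the $i$‑uniform bound $\big|\int_s^t e^{-\lambda_i(t-r)}d\omega(r)\big|_V\le c\|\omega\|_{\beta'}(t-s)^{\beta'}$; combined with \eqref{finitesum2} this gives $\|I\|_{L_2(L_2(V,V_\delta),V)}\le c\|\omega\|_{\beta'}(t-s)^{\beta'}\big(\sum_i\lambda_i^{-2\delta}\big)^{1/2}$, as claimed. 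This part is routine, in the spirit of the bound for $(\omega\otimes_S\omega)$ in Section \ref{regular}.
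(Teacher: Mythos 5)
Your proposal is correct. For the second bound in (i) (one fractional integration by parts against the bound $\|D^{\alpha}_{s+}(S(\cdot-s)e)[\xi]\|\le c(\xi-s)^{-\alpha}|e|$ and a Beta integral) and for part (ii) you reach the same conclusions as the paper; in (ii) you diagonalize $S$ and do a classical integration by parts in each spectral mode, reducing everything to a scalar bound uniform in $\lambda_i$ times $(\sum_i\lambda_i^{-2\delta})^{1/2}$, whereas the paper estimates $\|D^{\alpha}_{s+}S(t-\cdot)[r]\|_{L(V)}$ and the Hilbert--Schmidt norm of $E\mapsto ED^{1-\alpha}_{t-}\omega_{t-}[r]$ directly --- same content, and your version is arguably cleaner. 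The genuine divergence is in the increment bound of (i). You correctly identify the obstruction: with a single exponent one needs $\alpha+\beta'<1$ to integrate the $(\xi-r)^{-\alpha-\beta'}$ singularity of $D^{\alpha}_{r+}\big[(S(\cdot-r)-S(\cdot-s))e\big]$, but $\alpha>1-\beta'$ for $D^{1-\alpha}_{t-}\omega_{t-}$ to exist. You resolve it with a Whitney-type decomposition of $[r,t]$ at scale $r-s$, using the semigroup decay $\|(\mathrm{id}-S(r-s))S(k(r-s))e\|\le ck^{-1}|e|$ and the corresponding increment bounds to sum the pieces, and absorbing the logarithm from the boundary terms into the gain $(r-s)^{\beta''-\beta'}$. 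The paper instead keeps a single integration by parts over $[r,t]$ but places the fractional order $\alpha'$ in the nonempty window $(1-\beta'',1-\beta')$: then $D^{1-\alpha'}_{t-}\omega_{t-}$ is controlled by $\|\omega\|_{\beta''}$ while the singularity $(\xi-r)^{-\alpha'-\beta'}$ is integrable, and one application of Lemma \ref{l1-m} produces the harmless factor $(t-r)^{\beta''-\beta'}$. Both routes exploit exactly the gap $\beta''-\beta'>0$ and both explain why the constant carries $\|\omega\|_{\beta'}+\|\omega\|_{\beta''}$; the paper's is a three-line computation once Lemma \ref{l0} is in place, while yours is technically heavier but more elementary, making the competition between semigroup smoothing off the diagonal and the roughness of $\omega$ explicit and avoiding the choice of the two auxiliary orders $\alpha'<\alpha''$.
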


\begin{proof}
Let $\beta^\prime <\beta^{\prime\prime}<H$ such that for
$\alpha<\alpha^{\prime}<1$ we have
\begin{equation*}
    \beta^\prime+\alpha^{\prime}<1<\beta^{\prime\prime}+\alpha.
\end{equation*}
Then
\begin{equation}
\begin{split}\label{ex1m}
    \omega_S(r,t)e&-\omega_S(s,t)e\\=&(-1)^{\alpha}\int_r^t(S(\xi-r)e - S(\xi-s)e)\otimes_Vd\omega(\xi)-(-1)^{\alpha}\int_s^rS(\xi-s)e\otimes_Vd\omega(\xi).
    \end{split}
\end{equation}
Taking Lemma \ref{l0} into account we obtain
\begin{align*}
    |D^{\alpha}_{r+}(S(\cdot-r)e-S(\cdot-s)e)[\xi]|&\le c \bigg(\frac{(r-s)^{\beta^\prime}}{(\xi-r)^{\alpha +\beta^\prime}}
    +\alpha
    \int_r^\xi\frac{(r-s)^{\beta^\prime}(\xi-q)^{\alpha^{\prime}}}
    {(\xi-q)^{1+\alpha}(q-r)^{\alpha^{\prime}+\beta^\prime}}dq\bigg)|e|\\
    &\le c(r-s)^{\beta^\prime}(\xi-r)^{-\alpha -\beta^\prime}|e|.
\end{align*}
On the other hand, since $\omega$ is $\beta^\prime$-H\"older continuous it is also true that $\omega$ is $\beta^{\prime \prime}$-H\"older continuous. Moreover, we can write \eqref{q1} in the following way
\begin{align*}
    &|D^{1-\alpha}_{t-}\omega_{t-}[\xi]|\le c\|\omega\|_{\beta^{\prime\prime}}(t-\xi)^{\alpha+\beta^{\prime\prime}-1}.
\end{align*}
Hence, by applying Lemma \ref{l1-m} above, the first integral on the
right-hand side of \eqref{ex1m} is bounded in particular by
$c|r-s|^{\beta^\prime} \|\omega\|_{\beta^{\prime\prime}}|e|$.

Furthermore, for the last term in \eqref{ex1m} and  $1-\beta^\prime <\alpha<\alpha^{\prime}<1$, in a similar
way we obtain
\begin{align*}
    \int_ s^r
    \|D^{\alpha}_{s+}S(\cdot-s)e[\xi]&\otimes_VD^{1-\alpha}_{r-}\omega_{r-}[\xi]\|d\xi
    \le \int_ s^r |D^{\alpha}_{s+}S(\cdot-s)e[\xi]|
    |D^{1-\alpha}_{r-}\omega_{r-}[\xi]|d\xi \\
    &\leq c\|\omega\|_{\beta^{\prime}} |e| \int_
    s^r\bigg((\xi-s)^{-\alpha}+\int_s^\xi\frac{(\xi-q)^{\alpha^{\prime}}}{(q-s)^{\alpha^{\prime}}(\xi-q)^{\alpha+1}} dq\bigg)
    (r-\xi)^{\alpha+\beta^{\prime}-1}d\xi
    \\&\le c\|\omega\|_{\beta^{\prime}}|r-s|^{\beta^\prime}|e|.
\end{align*}
The second statement of (i) follows directly from the last inequality taking $r=t$.
For the last conclusion of (i), for the parameters chosen at the beginning of the proof,
  \begin{align*}
   |D_{s+}^{\alpha}(S(\cdot-s)(-A)^{\beta^\prime}e)[\xi]|\le c\bigg(\frac{|e|}{
(\xi-s)^{\alpha+\beta^\prime}}+\int_s^\xi\frac{(\xi-q)^{\alpha^{\prime}}|e|}{(\xi-q)^{1+\alpha}(q-s)^{\alpha^{\prime}+\beta^\prime}}dq\bigg)\leq c |e| (\xi-s)^{-\alpha-\beta^\prime},
\end{align*}
and therefore, since in particular $\beta^\prime+\alpha <1$,
\begin{align*}
    \int_ s^t
    \|D_{s+}^{\alpha}(S(\cdot-s)(-A)^{\beta^\prime}e)[\xi]&\otimes_VD^{1-\alpha}_{t-}\omega_{t-}[\xi]\|d\xi
   \leq c\|\omega\|_{\beta^{\prime \prime }} |e| \int_
    s^t  (\xi-s)^{-\alpha-\beta^\prime}    (t-\xi)^{\alpha+\beta^{\prime \prime}-1}d\xi
    \\&\le c\|\omega\|_{\beta^{\prime \prime}}(t-s)^{\beta^{\prime \prime} -\beta^\prime}|e| \leq  c |e|.
\end{align*}
Now we prove (ii). First of all, note that $I(\cdot)$ is well-defined since, thanks to  \eqref{eq10bis} and Remark \ref{sing}, it suffices the local H\"older continuity of $r\mapsto S(t-r)E$.
In addition the mapping
\begin{equation*}
    L_2(V, V_\delta) \ni E\mapsto ED_{t-}^{1-\alpha}\omega_{t-}[r]
\end{equation*}
is in $L_2(L_2(V,V_\delta),V)$, and the norm respect to this space is  $(\sum_i\lambda_i^{-2\delta})^{\frac12}|D_{t-}^{1-\alpha}\omega_{t-}[r]|$ which can be estimated by (\ref{finitesum2}) and \eqref{q1}. Furthermore, the integrand $D^\alpha_{s+} S(t-\cdot) [r]\cdot D^{1-\alpha}_{t-} \omega_{t-} [r]$ is weakly measurable  with respect to the separable Hilbert space $L_2(L_2(V,V_\delta),V)$ such that by Pettis' theorem
the integrand is measurable. Moreover, thanks to Lemma \ref{l1-m} above, and the fact that $D_{t-}^{1-\alpha}E \omega_{t-} [r]=ED_{t-}^{1-\alpha} \omega_{t-} [r]$, we get
\begin{align*}
   \|I(\cdot)&\|_{L_2(L_2(V,V_\delta),V)}=\bigg\| \int_s^t S(t-r)\cdot d\omega(r)\bigg\|_{L_2(L_2(V,V_\delta),V)}\\& \leq  \int_s^t \| D^\alpha_{s+} S(t-\cdot) [r]\|_{L(V)} \|\cdot D^{1-\alpha}_{t-} \omega_{t-} [r]\|_{L_2(L_2(V,V_\delta),V)}dr \le c\|\omega\|_{\beta^{\prime}} (t-s)^{\beta^\prime}.
   \end{align*}
\end{proof}
\begin{lemma}\label{l7} Under the Hypothesis ${\bf H}$, for $0 \leq s\le r \leq q\le  t\le 1$, $\tilde E\in L_2(V\otimes V,V_\delta)$  and $U=(u,v)\in
W(0,1)$ for the mapping
\begin{equation*}
    L_2(V\otimes V,V_\delta)\ni \tilde E\mapsto \tilde E(u\otimes (\omega \otimes_S \omega)(t))(s,q)\in L_2(L_2(V\otimes V,V_\delta),V\otimes V)
\end{equation*}
the inequality
\begin{align}\label{neu1}
&\|(u\otimes (\omega \otimes_S \omega)(t))(s,q)\|_{L_2(L_2(V\otimes V,V_\delta),V\otimes V)}\le c |||U||| (q-s)^{\beta+\beta^\prime}(t-s)^{\beta^\prime}
    \end{align}
holds.
Moreover, the expression $(u\otimes (\omega \otimes_S \omega))$ satisfies the Chen equality $\eqref{chen3fold}$ and its generalized form \eqref{3tensorChen}. In addition,
\begin{align}\label{neu2}
&\|D_{t-}^{1-\alpha}
\dD_{t-}^{1-\alpha}(u\otimes(\omega\otimes_S\omega)(t))[r]\|_{L_2(L_2(V\otimes V,V_\delta),V\otimes V)}  \leq c |||U||| (t-r)^{\beta+2\beta^\prime+2\alpha-2}.
\end{align}
\end{lemma}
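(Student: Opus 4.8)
The plan is to prove the three assertions directly from the defining formula \eqref{eq131bis}: the bound \eqref{neu1} by estimating each of its three integrals, the Chen equality \eqref{chen3fold} by the additivity argument already carried out in the regular case, and \eqref{neu2} by feeding \eqref{neu1} into the definitions \eqref{fractder}, \eqref{vfd} of the fractional derivatives. As a preliminary step I would record, using Lemma \ref{lex7} and \eqref{finitesum2}, that $r\mapsto\int_s^rS(r-q)\cdot\,d\omega(q)$ is Hilbert--Schmidt from $L_2(V,V_\delta)$ to $V$ with norm $\le c\|\omega\|_{\beta^\prime}(r-s)^{\beta^\prime}$, that $\omega_S(r,t)$ is Hilbert--Schmidt from $V_\delta$ to $V\otimes V$ with $\|\omega_S(r,t)\|_{L_2(V_\delta,V\otimes V)}\le c\|\omega\|_{\beta^\prime}(t-r)^{\beta^\prime}$ and $\|\omega_S(r,t)-\omega_S(q,t)\|_{L_2(V_\delta,V\otimes V)}\le c(r-q)^{\beta^\prime}$, and hence --- combining these with Hypothesis {\bf H}(4) in \eqref{xxxeqbis} --- that the two-form $(\omega_S(t)\otimes\omega)$, taken with values in $L_2(L_2(V,V_\delta),V\otimes V)$, satisfies $\|(\omega_S(t)\otimes\omega)(s,\tau)\|\le c\,(t-s)^{\beta^\prime}(\tau-s)^{\beta^\prime}$ with a constant depending on $\|\omega\|_{\beta^\prime}$ and $\|(\omega\otimes_S\omega)\|_{2\beta^\prime}$. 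Since $\beta^\prime>1-\alpha$ (which follows from $\alpha>1-\beta$ and $\beta<\beta^\prime$), this last bound makes $\dD_{q-}^{1-\alpha}(\omega_S(t)\otimes\omega)[r]$ meaningful and, by the mechanism behind \eqref{eq26}, yields $\|D_{q-}^{1-\alpha}\dD_{q-}^{1-\alpha}(\omega_S(t)\otimes\omega)[r]\|\le c\,(t-r)^{\beta^\prime}(q-r)^{\beta^\prime+2\alpha-2}$.

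For \eqref{neu1} I would estimate each of the three integrals of \eqref{eq131bis} componentwise in the orthonormal basis $(\tilde E_{ijk})$ of $L_2(V\otimes V,V_\delta)$ and then square-sum over $i,j,k$, which is legitimate by Pettis' theorem and whose finiteness relies on \eqref{finitesum2}, exactly as in the computations following \eqref{eq10bis1} and in the proof of Lemma \ref{lex7}(ii) --- the index $j$ couples to $|u(r)-u(s)|^2$, the index $k$ to the components of the $V$- or $V\otimes V$-valued second factor, and the index $i$ to $\sum_i\lambda_i^{-2\delta}<\infty$. In the first integral the argument $r\mapsto\omega_S(r,t)\tilde E(u(r)-u(s),\cdot)$ vanishes at $r=s$, and its increments are controlled by the increment bound on $\omega_S(\cdot,t)$, by $\|u\|_\beta$, and --- for the compensated part, cf.\ \eqref{compensated}, \eqref{rel} --- by $\|u\|_\beta^2$; this gives $\|\hat D_{s+}^\alpha(\omega_S(\cdot,t)\tilde E(u(\cdot)-u(s),\cdot))[r]\|\le c\,|||U|||\,(t-s)^{\beta^\prime}(r-s)^{\beta-\alpha}$, which paired with $|D_{q-}^{1-\alpha}\omega_{q-}[r]|\le c\,\|\omega\|_{\beta^\prime}(q-r)^{\alpha+\beta^\prime-1}$ from \eqref{q1} and the Beta integral $\int_s^q(r-s)^{\beta-\alpha}(q-r)^{\alpha+\beta^\prime-1}dr=c(q-s)^{\beta+\beta^\prime}$ bounds the first integral by $c\,|||U|||\,(q-s)^{\beta+\beta^\prime}(t-s)^{\beta^\prime}$. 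In the second integral, $\|D_{s+}^{2\alpha-1}\tilde E(u(\cdot)-u(s),\cdot)[r]\|\le c\,\|u\|_\beta(r-s)^{\beta-2\alpha+1}$ by \eqref{eq28} (legitimate since $2\alpha-1<\beta$ by $\alpha<(\beta+1)/2$), and pairing with the $D\dD(\omega_S(t)\otimes\omega)$-bound above and another Beta integral again gives $c\,|||U|||\,(q-s)^{\beta+\beta^\prime}(t-s)^{\beta^\prime}$; the third integral is symmetric, using $\|D_{s+}^{2\alpha-1}\omega_S(\cdot,t)[r]\|\le c\,(t-s)^{\beta^\prime}(r-s)^{1-2\alpha}$ from \eqref{eq28} together with the bounds on $\omega_S$, and $\|D_{q-}^{1-\alpha}\dD_{q-}^{1-\alpha}v[r]\|\le c\,\|v\|_{\beta+\beta^\prime}(q-r)^{\beta+\beta^\prime+2\alpha-2}$ from \eqref{eq26}. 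Summing the three contributions and using $(t-r)^{\beta^\prime}\le(t-s)^{\beta^\prime}$ yields \eqref{neu1}; every exponent condition needed for the Beta integrals reduces to an inequality already imposed in Hypothesis {\bf H}(2)-(3).

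The Chen equality \eqref{chen3fold} I would obtain as in the regular case --- the derivation of \eqref{3tensorChen}--\eqref{chen3fold} preceding Lemma \ref{uomom}, and as done for $(u\otimes\omega)$ in Lemma \ref{l8} --- by splitting each integral of \eqref{eq131bis} at the intermediate point $r$ via $\int_s^t=\int_s^r+\int_r^t$ and invoking the Chen equalities now available for $v$ (namely \eqref{chenbis}), for $\omega$, for $(\omega\otimes_S\omega)$ (Hypothesis {\bf H}(4)) and for $(\omega_S(t)\otimes\omega)$ (which follows from that of $(\omega\otimes_S\omega)$, the definition \eqref{xxxeqbis} and Lemma \ref{lex7}); the fractional integration-by-parts identities \eqref{eq10b}, \eqref{rel}, \eqref{rel2} then reproduce exactly the correction term $\tilde E(u(r)-u(s),\cdot)(\omega\otimes_S\omega)(r,t)$, the only difference from Section \ref{regular} being that the manipulations are now carried out on $\beta^\prime$-H\"older data. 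Finally, \eqref{neu2} follows from \eqref{neu1}: inserting $\|(u\otimes(\omega\otimes_S\omega)(t))(r,\tau)\|\le c\,|||U|||\,(\tau-r)^{\beta+\beta^\prime}(t-r)^{\beta^\prime}$ into \eqref{vfd} gives $\|\dD_{t-}^{1-\alpha}(u\otimes(\omega\otimes_S\omega)(t))[r]\|\le c\,|||U|||\,(t-r)^{\beta+2\beta^\prime+\alpha-1}$, and then the argument behind \eqref{eq26} --- carrying along the extra weight $(t-r)^{\beta^\prime}$ --- lowers the exponent by a further $1-\alpha$, producing the exponent $\beta+2\beta^\prime+2\alpha-2$, which is $>-1$ because $\alpha>1-\beta$; the Hilbert--Schmidt norm is controlled as before through \eqref{finitesum2}.

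The hard part will be the bound \eqref{neu1}: one has to keep track simultaneously of the H\"older exponent $\beta+\beta^\prime$ in the pair $(s,q)$ \emph{and} of the separate factor $(t-s)^{\beta^\prime}$ carried by the outer variable $t$ through $\omega_S(\cdot,t)$ and $(\omega_S(t)\otimes\omega)$, while at every stage verifying that the trilinear Hilbert--Schmidt sum over $(\tilde E_{ijk})$ converges under \eqref{finitesum2}. It is precisely this extra $(t-s)^{\beta^\prime}$ factor that makes \eqref{neu2} strictly sharper than a naive application of \eqref{eq26}, and that is indispensable for later closing the fixed-point argument for \eqref{equ1}--\eqref{equ2} on small intervals.
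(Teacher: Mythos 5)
Your proposal follows essentially the same route as the paper: term-by-term estimation of the three integrals in \eqref{eq131bis} via the bounds on $\omega_S$ from Lemma \ref{lex7}, the compensated-derivative estimate of type \eqref{eq23}, the $D\dD$-bounds \eqref{eq26}/\eqref{q3}, Beta integrals, and Hilbert--Schmidt summability through \eqref{finitesum2} with Pettis' theorem for measurability, with \eqref{neu2} then deduced from \eqref{neu1} exactly as in the paper. The only minor divergence is the Chen equality, which you verify directly by splitting the integrals at the intermediate point while the paper deduces it by approximation from the smooth case of Section 3; both are viable, and your estimates and exponent checks are consistent with Hypothesis {\bf H}.
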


\begin{proof} Let us consider separately the three terms of  $(u\otimes (\omega \otimes_S \omega)(t))(s,q)$ given in \eqref{eq131bis}.
We start with
\begin{align*}
I_1(\tilde E):=\int_s^q D_{s+}^{2\alpha-1}\omega_S(\cdot,t)[r]
D_{q-}^{1-\alpha}\dD_{q-}^{1-\alpha}\tilde E v[r]
dr.
\end{align*}
For a fixed $v\in V\otimes V$ the mapping
\begin{equation}\label{eq21}
    L_2(V\otimes V,V_ \delta)\ni \tilde E\mapsto  \tilde Ev
\end{equation}
is in $L_2(L_2(V\otimes V,V_ \delta), V)$. The norm with respect to this space is $(\sum_{i=1}^\infty\lambda_i^{-2\delta})^\frac12\|v\|$.
Then, since Lemma \ref{lex7} (i) in particular implies that $D_{s+}^{2\alpha-1}\omega_S(r,t)$ is in $L(V,V\otimes V)$, then $\tilde E\mapsto I_1(\tilde E)$ is a mapping in $L_2(L_2(V\otimes V,V_ \delta),V\otimes V)$.
By Pettis' Theorem it is not hard to see  that the integrand of $I_1$ is weakly measurable and hence measurable as a mapping from $[s,q]$ into the separable linear space $L_2(L_2(V\otimes V,V_ \delta),V\otimes V)$.
Moreover, we have
\begin{align*}
  \|I_1(\cdot)\|_{L_2(L_2(V\otimes V,V_\delta),V\otimes V)}& \leq \int_s^q \bigg\| D_{s+}^{2\alpha-1}\omega_S(\cdot,t)[r]D_{q-}^{1-\alpha}{\mathcal D}_{q-}^{1-\alpha}\cdot v[r]\bigg\|_{L_2(L_2(V\otimes V,V_\delta),V\otimes V)}dr\\
  &\leq \int_s^q \|D_{s+}^{2\alpha-1}\omega_S(\cdot,t)[r]\|_{L(V,V\otimes V)} \|D_{q-}^{1-\alpha}{\mathcal D}_{q-}^{1-\alpha}\cdot v[r]\|_{L_2(L_2(V\otimes V,V_ \delta), V)}dr.
  \end{align*}
In order to estimate the second factor in the integrand of $I_1$, we note that
\begin{align}\label{vfd2}
D_{q-}^{1-\alpha}{\mathcal D}_{q-}^{1-\alpha}\tilde E v[r]=\tilde E D_{q-}^{1-\alpha}{\mathcal D}_{q-}^{1-\alpha} v[r]
\end{align}
which follows easily simply exchanging the integrals in the definition of the fractional derivatives and $\tilde E$.
Under the Hypothesis ${\bf H}$, for $r\in (s,q)$ and $v\in C_{\beta+\beta^\prime}(\Delta_{0,1};V
\otimes V)$,
\begin{align*}
  &\|\dD_{q-}^{1-\alpha}v[r]\|\le
  c\|v\|_{\beta+\beta^\prime}(q-r)^{\beta+\beta^\prime+\alpha-1}.
\end{align*}
Then, thanks to \eqref{chenbis}, for $r\in (s,q)$ and $U\in
    W(0,1)$,  we obtain
\begin{align}\label{q3}
    &\|D_{q-}^{1-\alpha}\dD_{q-}^{1-\alpha}v[r]\|\le
    c(\|v\|_{\beta+\beta^\prime}+\|u\|_{\beta}\|\omega\|_{\beta^\prime})(q-r)^{\beta+\beta^\prime +2\alpha-2}.
\end{align}
In fact, \eqref{eq21} together with \eqref{vfd2} immediately implies that
\begin{align*}
\|D_{q-}^{1-\alpha}{\mathcal D}_{q-}^{1-\alpha}\cdot v[r]\|_{L_2(L_2(V\otimes V,V_ \delta),V)} \leq c (\|v\|_{\beta+\beta^\prime}+\|u\|_{\beta}\|\omega\|_{\beta^\prime}) (q-r)^{\beta+\beta^\prime +2\alpha-2}.
\end{align*}
On the other hand, by Lemma \ref{lex7} (i),
\begin{equation*}
   \|D_{s+}^{2\alpha-1}\omega_S(\cdot,t)[r]\|_{L(V,V\otimes V)}\le c\bigg(\frac{(t-r)^{\beta^\prime}}{(r-s)^{2\alpha-1}}
    +\int_s^r\frac{(r-q)^{\beta^\prime}}{(r-q)^{2\alpha}}dq\bigg)(\|\omega\|_{\beta^\prime}+\|\omega\|_{\beta^{\prime\prime}}).
\end{equation*}
Combining the previous estimates we can conclude
\begin{align}\label{i1}
\begin{split}
  &  \bigg\|\int_s^q D_{s+}^{2\alpha-1}\omega_S(\cdot,t)[r]D_{q-}^{1-\alpha}{\mathcal D}_{q-}^{1-\alpha}\cdot v[r]dr\bigg\|_{L_2(L_2(V\otimes V,V_\delta),V\otimes V)}\\
  \le &c(\|v\|_{\beta+\beta^\prime}+\|u\|_{\beta}\|\omega\|_{\beta^\prime})(\|\omega\|_{\beta^\prime}+\|\omega\|_{\beta^{\prime\prime}})(t-s)^{\beta^\prime}(q-s)^{\beta+\beta^\prime}.
  \end{split}
\end{align}
Next we deal with
\begin{align*}
I_2(\tilde E):=\int_s^q \hat
D_{s+}^{\alpha}\omega_S(\cdot,t)\tilde
E(u(\cdot)-u(s),\cdot)[r]D_{q-}^{1-\alpha}\omega_{q-}[r]dr.
\end{align*}
First of all, the mapping $\tilde E \in L_2(V\otimes V, V_\delta) \mapsto I_2(\tilde E)$ is in $L_2(L_2(V\otimes V,V_\delta),V\otimes V)$, since for $H>\beta^{\prime\prime} >\beta^\prime$ the integrand of $I_2$ satisfies
\begin{align}
\label{eq23}
\begin{split}
&\bigg(\sum_{i,j,k}\|\hat
D_{s+}^{\alpha}\omega_S(\cdot,t)\tilde E_{ijk}(u(\cdot)-u(s),D^{1-\alpha}_{q-}\omega_{q-})[r]\|^2\bigg)^\frac12\\
\leq  & c\bigg(\sum_{i,j,k}\bigg\|  \frac{\omega_S(r,t)\tilde E_{ijk}(u(r)-u(s),D^{1-\alpha}_{q-}\omega_{q-}[r])}
    {(r-s)^\alpha}\bigg\|^2\\
&+\alpha \bigg\| \int_s^r\frac{(\omega_S(r,t)-\omega_S(\theta,t))\tilde E_{ijk}(u(r)-u(\theta),D^{1-\alpha}_{q-}\omega_{q-}[r])}{(r-\theta)^{1+\alpha}}
    d\theta \bigg\|^2\bigg)^\frac12\\
\leq & c\|u\|_{\beta}\|\omega\|_{\beta^\prime}(\|\omega\|_{\beta^\prime}+\|\omega\|_{\beta^{\prime\prime}}) \bigg(\sum_i\frac{1}{\lambda_i^{2\delta}}\bigg)^\frac12(
(t-r)^{\beta^\prime}(r-s)^{\beta-\alpha}+
(r-s)^{\beta^\prime+\beta-\alpha})(q-r)^{\alpha+\beta^\prime-1}.
\end{split}
\end{align}
Again, by Pettis' Theorem the integrand of $I_2$ is measurable. Therefore (\ref{eq23}) gives
\begin{align*}
 \|I_2(\cdot)\|_{L_2(L_2(V\otimes V,V_\delta),V\otimes V)} & \leq \int_s^q \bigg\| \hat
D_{s+}^{\alpha}\omega_S(\cdot,t) \cdot (u(\cdot)-u(s))[r]D_{q-}^{1-\alpha}\omega_{q-}[r]\bigg\|_{L_2(L_2(V\otimes V,V_\delta),V\otimes V)}dr\\
&\leq c \|u\|_{\beta}\|\omega\|_{\beta^\prime}(\|\omega\|_{\beta^{\prime\prime}}+\|\omega\|_{\beta^\prime})
(q-s)^{\beta+\beta^\prime}(t-s)^{\beta^\prime}.
\end{align*}
Now we estimate
\begin{align}\label{eq34}
I_3(\tilde E):=\int_s^qD_{s+}^{2\alpha-1}\tilde E(u(\cdot)-u(s),\cdot))[r]
D_{q-}^{1-\alpha}\dD_{q-}^{1-\alpha}(\omega_S(t)\otimes\omega)[r] dr.
\end{align}
We can split the previous integral into three integrals due to (\ref{xxxeqbis}). To treat the corresponding first expression let us write down the following estimate for $\alpha<\gamma<1,\,\beta^\prime<\gamma$:
\begin{align*}
  |D_{s+}^\alpha ((S(\tau-\cdot)-{\rm id})(-A)^{-\beta^\prime}e)[r]| &\le  c\bigg(\frac{|(S(\tau-r)-{\rm id})(-A)^{-\beta^\prime}e|}{(r-s)^{\alpha}}+
\int_s^r\frac{|(S(\tau-r)-S(\tau-q))(-A)^{-\beta^\prime}e|}{(r-q)^{1+\alpha}}dq \bigg)\\
  \le & c\bigg(\frac{(\tau-r)^{\beta^\prime}}{(r-s)^\alpha}+\int_s^r\frac{(|(S(r-q)-{\rm id})(-A)^{-\beta^\prime}S(\tau-r)e|}{(r-q)^{1+\alpha}}dq\bigg)\\
  \le &c\bigg(\frac{(\tau-r)^{\beta^\prime}}{(r-s)^\alpha}+\frac{(\tau-r)^{\beta^\prime-\gamma}}{(r-s)^{\alpha-\gamma}}\bigg)|e|.
\end{align*}
By the third statement of Lemma \ref{lex7} (i), for $2\beta^\prime >\gamma -\alpha$ we conclude that
\begin{align*}
&\|\omega_S(\tau,t) \int_s^\tau (S(\tau-r)-{\rm id})\cdot d\omega(r)\|_{L_2(L_2(V,V_\delta),V\otimes V)}\\
&=\|\omega_S(\tau,t)(-A)^{\beta^\prime}\int_s^\tau (S(\tau-r)-{\rm id})(-A)^{-\beta^\prime}\cdot d\omega(r)\|_{L_2(L_2(V,V_\delta),V\otimes V)}\\
&\le c \bigg\|\int_s^\tau (S(\tau-r)-{\rm id})(-A)^{-\beta^\prime}\cdot d\omega(r)\bigg\|_{L_2(L_2(V,V_\delta),V)}\\
   &\le c \int_s^\tau \bigg(\sum_{i,j}\bigg|D_{s+}^\alpha((S(\tau-r)-{\rm id})(-A)^{-\beta^\prime}\frac{e_i}{\lambda_i^\delta})[r] (e_jD_{\tau-}^{1-\alpha}\omega_{\tau-} [r])\bigg|^2\bigg)^\frac12dr\\
  &\le c\int_s^\tau\bigg(\sum_i\frac{1}{\lambda_i^{2\delta}}\bigg(\frac{(\tau-r)^{\beta^\prime}}{(r-s)^\alpha}+\frac{(\tau-r)^{\beta^\prime-\gamma}}{(r-s)^{\alpha-\gamma}}\bigg)^2|D_{\tau-}^{1-\alpha}\omega_{\tau-}[r]|^2\bigg)^\frac12dr\\
  &\le c\bigg(\sum_i\frac{1}{\lambda_i^{2\delta}}\bigg)^\frac12\int_s^\tau\bigg(\frac{(\tau-r)^{\beta^\prime}}{(r-s)^\alpha}+\frac{(\tau-r)^{\beta^\prime-\gamma}}{(r-s)^{\alpha-\gamma}}\bigg)(\tau-r)^{\beta^\prime+\alpha-1}dr\le c(\tau-s)^{2\beta^\prime}.
\end{align*}
For the other terms of the right-hand side of \eqref{xxxeqbis} we have by Hypothesis {\bf H}, item (4), and Lemma \ref{lex7} that
\begin{equation*}
\|(-1)^{-\alpha}\cdot(\omega\otimes_S\omega)(s,\tau)-(\omega_S(s,t)-\omega_S(\tau,t))\cdot(\omega(\tau)-\omega(s))\|_{L_2(L_2(V,V_\delta),V\otimes V)} \le c(\tau-s)^{2\beta^\prime}.
\end{equation*}
Since $\omega^n$ is smooth, the expression $\omega^n_S(t)\otimes \omega^n$ satisfies the Chen equality, see Remark \ref{r1}. Moreover, we have the convergence of $\omega^n_S(t)\otimes \omega^n$ to $\omega_S(t)\otimes\omega$ in $L_2(L_2(V,V_\delta),V\otimes V)$ such that the latter term satisfies the Chen equality too. This Chen equality and  the regularity of $\omega_S(t)\otimes\omega$
yields
\begin{align*}
    &\bigg\|D_{q-}^{1-\alpha}\dD_{q-}^{1-\alpha}(\omega_S(t)\otimes\omega)\bigg\|_{L_2(L_2(V,V_\delta),V\otimes V)}\le
    c(q-r)^{2\beta^\prime+2\alpha-2}
\end{align*}
which allows us to treat the  integral $I_3(\tilde E)$.
Let $(\tilde E_{ijk})_{i,j,k\in\NN}$ and $(E_{ik})_{i,k \in\NN}$ be the orthonormal basis of $L_2(V\otimes V;V_\delta)$ and $L_2(V;V_\delta)$ constructed by $(e_i)_{i\in\NN}$, see Section \ref{regular}, then
\begin{align*}
\|I_{3}(\cdot)(s,q)\|_{L_2(L_2(V\otimes V;V_\delta);V\otimes V)}\le
 & \int_s^q\bigg(\sum_{i,j,k}\|D_{q-}^{1-\alpha}\dD_{q-}^{1-\alpha}
 (\omega_S(t)\otimes\omega)[r]
   D_{s+}^{2\alpha-1}\tilde E_{ijk} (u(\cdot)-u(s),\cdot)[r]\|^2\bigg)^\frac12dr \\
  \le& \int_s^q\bigg(\sum_{i,j}\|D_{q-}^{1-\alpha}\dD_{q-}^{1-\alpha}(E_{ik}\omega_S(t)\otimes\omega)[r]\|^2   |D_{s+}^{2\alpha-1}(u(\cdot)-u(s))[r]|^2\bigg)^\frac12 dr\\
  \le&c|||U|||\bigg(\sum_{i}\frac{1}{\lambda_i^{2\delta}}\bigg)^\frac12\int_s^q(r-s)^{\beta-2\alpha+1}
  (q-r)^{2\alpha+2\beta^\prime-2}dr\\
  \le& c|||U|||(q-s)^{\beta+\beta^\prime}(t-s)^{\beta^\prime}.
\end{align*}
Therefore, collecting the estimates for $I_1$, $I_2$ and $I_3$ we get \eqref{neu1}.

Moreover, the previous estimate for
$(u\otimes(\omega\otimes_S\omega))$ implies \eqref{neu2} similarly to \eqref{eq26}. To get  this relation we want to emphasize that it is necessary to use the Chen equality \eqref{chen3fold} and its generalized form \eqref{3tensorChen} which also holds if $u$ is $\beta$-H{\"o}lder continuous and $\omega\otimes_S\omega$ is $2\beta^\prime$-H{\"o}lder continuous, applying the fractional integration technique to \eqref{eq33}.
\end{proof}
The following estimates are crucial for the existence of a
solution.

\begin{lemma}\label{ex1}
Suppose that Hypothesis ${\bf H}$ holds. Then for any $1\geq \delta\,\geq \beta$ there exists $C>0$ such that for $u_0\in V_\delta$, $T\in [0,1]$ and $U\in W(0,T)$ we have
\begin{align}\label{ex4m}
\|{\mathcal T}_1(U)\|_{\beta}& \le CT^{\delta-\beta}|u_0|_{V_\delta}+CT^{\beta^\prime-\beta}(1+T^{2\beta}|||U|||^2),\\
|{\mathcal T}_1(U)(T)|_{V_\delta}& \le|u_0|_{V_\delta}+
CT^{\beta^\prime}(1+T^{2\beta}|||U|||^2).\nonumber
\end{align}

\end{lemma}

\begin{remark}\label{r13}
We want to stress that the second term in both of the above inequalities stems from the integral term of \eqref{equ1},
while the first term is an estimate of $\|S(\cdot)u_0\|_\beta$ and $|S(T)u_0|_{V_\delta}$, respectively.
\end{remark}

The proof of this result is rather technical for which we have
preferred to present it in the Appendix.

In the following result, we obtain the corresponding estimate for
${\mathcal T}_2(U)(s,t)$, which has been defined as the right-hand side
of equation \eqref{equ2}.

\begin{lemma}\label{ex2}
Suppose that Hypothesis ${\bf H}$ holds. Then for any $1\geq \delta\,\geq \beta$ there exists $C>0$ such that for $u_0\in V_\delta$, $T\in [0,1]$ and $U\in W(0,T)$ we have
 \begin{align*}
   &\|{\mathcal T}_2(U)\|_{\beta+\beta^\prime} \le C(T^{\beta^\prime-\beta}(1+T^{2\beta}|||U|||^2)+T^{\delta-\beta}|u_0|_{V_\delta}).
\end{align*}
\end{lemma}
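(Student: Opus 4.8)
The plan is to split $\mathcal{T}_2(U)(s,t)$ into the three integrals on the right-hand side of \eqref{equ2}, say $\Phi_1(s,t)+\Phi_2(s,t)+\Phi_3(s,t)$, and to bound each $\|\Phi_i(s,t)\|$, uniformly for $0\le s<t\le T$, by a constant times $(t-s)^{\beta+\beta^\prime}$ times the appropriate combination of $|u_0|_{V_\delta}$, $\|u\|_\beta$ and $|||U|||$. In each case I would first rewrite the integral so that its integrand is the pairing of two fractional derivatives taking values in separable Hilbert spaces (as in \eqref{eq10bis}, \eqref{eq10bis1} and \eqref{eq25}); the estimate then reduces to bounding the norms of those fractional derivatives and to computing elementary integrals of the form $\int_s^t(r-s)^{a}(t-r)^{b}\,dr=(t-s)^{a+b+1}B(a+1,b+1)$. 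In every term the exponent $a+b+1$ comes out $\ge\beta+\beta^\prime$, so — since $T\le 1$ — the surplus is absorbed into a surplus power of $T$, and the bound is cast in the announced form with the help of $T^{\beta}|||U|||\le\tfrac12(1+T^{2\beta}|||U|||^2)$. As in Lemma \ref{l7} and Section \ref{preli}, each integrand is first checked to be weakly measurable (hence measurable by Pettis' theorem) and to belong to the relevant space of Hilbert--Schmidt operators; for $\Phi_3$ this, together with the regularity of the tensor $(u\otimes(\omega\otimes_S\omega))$, is precisely the content of Lemma \ref{l7}.

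For $\Phi_1(s,t)=\int_s^t(S(\xi-s)-{\rm id})u(s)\otimes_V d\omega(\xi)$ I would write $u(s)=u_0+(u(s)-u(0))$. Since $\xi\mapsto(S(\xi-s)-{\rm id})u(s)$ is continuous but not H\"older continuous, the integral is read through \eqref{eq10bis} with base point $s$ and justified by Remark \ref{sing}; this function vanishes at $\xi=s$, so only the non-singular part of $D_{s+}^{\alpha}$ survives. For the $u_0$-part, \eqref{eq2} gives $\|(S(r-s)-{\rm id})u_0\|\le c(r-s)^{\delta}|u_0|_{V_\delta}$, and for the increments one writes $g(r)-g(q)=(S(r-q)-{\rm id})S(q-s)u_0$ and uses \eqref{eq1}--\eqref{eq2} to gain regularity from $S(q-s)$ (so that $S(q-s)u_0\in V_1$ with norm $\le c(q-s)^{\delta-1}|u_0|_{V_\delta}$), obtaining $\|D_{s+}^{\alpha}((S(\cdot-s)-{\rm id})u_0)[r]\|\le c|u_0|_{V_\delta}(r-s)^{\delta-\alpha}$; together with \eqref{q1} this contributes $c|u_0|_{V_\delta}(t-s)^{\delta+\beta^\prime}$, i.e. the term $CT^{\delta-\beta}|u_0|_{V_\delta}$ after dividing by $(t-s)^{\beta+\beta^\prime}$ and using $\delta\ge\beta$. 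The contribution of $u(s)-u(0)$ is treated by the same computation with $\delta$ replaced by $0$, which produces a bound $\le c|u(s)-u(0)|\,(t-s)^{\beta^\prime}$; passing from there to a genuine $(t-s)^{\beta+\beta^\prime}$-estimate with a constant that does not degenerate as $s\to T$ is the delicate point of the whole lemma, and is where one combines $\delta\ge\beta$ with the Chen coupling \eqref{chenbis} of $u$, $v$ and $\omega$, exactly as in the finite-dimensional treatment of \cite{HuNu09}.

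For $\Phi_2(s,t)=-(-1)^\alpha\int_s^t\hat D_{s+}^\alpha G(u(\cdot))[r]\,D_{t-}^{1-\alpha}(\omega\otimes_S\omega)(\cdot,t)_{t-}[r]\,dr$ I would estimate the two factors separately. Writing out \eqref{compensated} and using Lemma \ref{l2} (the bounds $\|G(u(r))\|_{L_2(V,V_\delta)}\le c_G$ and $\|G(u(r))-G(u(q))-DG(u(q))(u(r)-u(q),\cdot)\|_{L_2(V,V_\delta)}\le c\|u\|_\beta^2(r-q)^{2\beta}$) gives $\|\hat D_{s+}^\alpha G(u(\cdot))[r]\|_{L_2(V,V_\delta)}\le c(c_G(r-s)^{-\alpha}+\|u\|_\beta^2(r-s)^{2\beta-\alpha})$, the second term being integrable because $\alpha<2\beta$ (Hypothesis \textbf{H}(2)). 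For the other factor, the $2\beta^\prime$-H\"older continuity of $(\omega\otimes_S\omega)$ assumed in Hypothesis \textbf{H}(4), together with its Chen equality (also in \textbf{H}(4)) and Lemma \ref{lex7}, gives $\|D_{t-}^{1-\alpha}(\omega\otimes_S\omega)(\cdot,t)_{t-}[r]\|_{L_2(L_2(V,V_\delta),V\otimes V)}\le c(t-r)^{2\beta^\prime+\alpha-1}$, exactly as in \eqref{eq26}, and here one uses $1-\alpha<\beta^\prime$ (which follows from $1-\beta<\alpha$ and $\beta<\beta^\prime$). Pairing the two factors via the Hilbert-space valued integral of Section \ref{preli} (the operator $(\omega\otimes_S\omega)$ acting on $\hat D_{s+}^\alpha G(u(\cdot))[r]$) and computing the Beta integrals, the $c_G$-part contributes $c(t-s)^{2\beta^\prime}$ and the $\|u\|_\beta^2$-part contributes $c\|u\|_\beta^2(t-s)^{2\beta+2\beta^\prime}$; dividing by $(t-s)^{\beta+\beta^\prime}$ and using $t-s\le T\le 1$ these become $CT^{\beta^\prime-\beta}$ and $CT^{\beta^\prime+\beta}|||U|||^2$, which is of the desired form.

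Finally, for $\Phi_3(s,t)=(-1)^{2\alpha-1}\int_s^t D_{s+}^{2\alpha-1}DG(u(\cdot))[r]\,D_{t-}^{1-\alpha}\dD_{t-}^{1-\alpha}(u\otimes(\omega\otimes_S\omega)(t))[r]\,dr$ the second factor is exactly the one estimated in Lemma \ref{l7}: by \eqref{neu2}, $\|D_{t-}^{1-\alpha}\dD_{t-}^{1-\alpha}(u\otimes(\omega\otimes_S\omega)(t))[r]\|_{L_2(L_2(V\otimes V,V_\delta),V\otimes V)}\le c|||U|||(t-r)^{\beta+2\beta^\prime+2\alpha-2}$. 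For the first factor, \eqref{eq28} applied with order $2\alpha-1$ (which lies in $(0,1)$ since $1-\beta<\alpha<1$ and $\beta<1/2$), together with Lemma \ref{l2} ($DG(\cdot)\in L_2(V\otimes V,V_\delta)$ is bounded and $\|DG(u(r))-DG(u(q))\|_{L_2(V\otimes V,V_\delta)}\le c\|u\|_\beta(r-q)^\beta$), yields $\|D_{s+}^{2\alpha-1}DG(u(\cdot))[r]\|_{L_2(V\otimes V,V_\delta)}\le c(c_{DG}(r-s)^{1-2\alpha}+\|u\|_\beta(r-s)^{1+\beta-2\alpha})$, the second term being integrable because $\alpha<(\beta+1)/2$ (Hypothesis \textbf{H}(2)). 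Pairing the two factors and computing the Beta integrals — which converge thanks to $-2\beta+2\alpha+\beta^\prime>1$, i.e. $\beta+2\beta^\prime+2\alpha-1>0$, from Hypothesis \textbf{H}(2)--(3) — one gets a $c_{DG}$-part of size $c|||U|||(t-s)^{\beta+2\beta^\prime}$ and a $\|u\|_\beta$-part of size $c|||U|||^2(t-s)^{2\beta+2\beta^\prime}$; dividing by $(t-s)^{\beta+\beta^\prime}$ and using $t-s\le T\le 1$ together with $T^\beta|||U|||\le\tfrac12(1+T^{2\beta}|||U|||^2)$ produces a bound of the form $CT^{\beta^\prime-\beta}(1+T^{2\beta}|||U|||^2)$. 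Summing the bounds for $\Phi_1$, $\Phi_2$ and $\Phi_3$ gives the assertion. I expect the only genuinely hard step to be the first term $\Phi_1$ — keeping the homogeneity in $t-s$ as the base point $s$ ranges over $[0,T]$ — the other two being a matter of assembling the estimates of Lemmas \ref{l2}, \ref{lex7} and \ref{l7} and carefully tracking the (numerous, but by design mutually consistent) exponent inequalities collected in Hypothesis \textbf{H}.
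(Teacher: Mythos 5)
Your decomposition into the three integrals of \eqref{equ2} and your treatment of the second and third of them agree with the paper: there too $B_2$ and $B_3$ are handled by pairing $\hat D^{\alpha}_{s+}G(u(\cdot))[r]$, resp.\ $D^{2\alpha-1}_{s+}DG(u(\cdot))[r]$, against the Hilbert--Schmidt operators $D^{1-\alpha}_{t-}(\omega\otimes_S\omega)(\cdot,t)_{t-}[r]$ and $D^{1-\alpha}_{t-}\dD^{1-\alpha}_{t-}(u\otimes(\omega\otimes_S\omega)(t))[r]$, using Lemma \ref{l7} and the Beta integrals of Lemma \ref{l1-m}; your exponent bookkeeping for those two terms is correct.

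The gap is in $\Phi_1$, exactly at the point you yourself flag as ``delicate''. Writing $u(s)=u_0+(u(s)-u(0))$ leaves you with $(S(\xi-s)-{\rm id})(u(s)-u(0))$, where $u(s)-u(0)$ is only an element of $V$ of size $\|u\|_\beta s^\beta$. Since $\|S(\xi-s)-{\rm id}\|_{L(V)}$ is merely bounded --- by \eqref{eq2} one gains a power $(\xi-s)^{\sigma-\theta}$ only by paying spatial regularity $\sigma-\theta$ --- this term can only be bounded by $c\|u\|_\beta s^\beta(t-s)^{\beta^\prime}$, which is not $\lesssim(t-s)^{\beta+\beta^\prime}$ when $s$ is of order $T$ and $t-s$ is small. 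Your proposed rescue, the Chen coupling \eqref{chenbis}, cannot close this gap: the Chen equality relates $v$ to increments of $u$ and $\omega$ and is a statement about the two-parameter structure in time; it supplies no $V_\delta$-regularity for $u(s)-u(0)$, which is what the operator $S(\xi-s)-{\rm id}$ needs in order to produce a factor $(\xi-s)^\beta$. The paper's resolution is different and essential: it substitutes the variation-of-constants representation $u(s)=S(s)u_0+\int_0^sS(s-r)G(u(r))\,d\omega(r)$, so that $B_1=B_{11}+B_{12}$ with $B_{11}$ involving $(S(\xi)-S(s))u_0$ (handled essentially as you handle your $u_0$-term) and $B_{12}$ involving $(S(\xi-s)-{\rm id})$ applied to $\int_0^sS(s-r)G(u(r))\,d\omega(r)$, which lies in $V_\delta$ with norm bounded by $Cs^{\beta^\prime}(1+s^{2\beta}|||U|||^2)$ by Lemma \ref{ex1}\,(ii) (this is the content of Remark \ref{r13}); then \eqref{eq1}--\eqref{eq2} give the factor $(\xi-s)^{\delta}$ with $\delta\ge\beta$, and the $(t-s)^{\beta+\beta^\prime}$ homogeneity follows. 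In short, the missing ingredient is the spatial smoothing of the semigroup acting on the $V_\delta$-valued integral $\int_0^sS(s-r)G(u(r))\,d\omega(r)$, not the Chen equality.
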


\begin{proof}
Let us denote ${\mathcal T}_2(U)(s,t)=: B_1(s,t)+B_2(s,t)+B_3(s,t)$,
corresponding to the three different addends of \eqref{equ2}.

For $B_1$ we can consider the following splitting:
\begin{align*}
B_1(s,t)=&\int_s^t (S(\xi-s)-{\rm id})u(s)\otimes_V d\omega(\xi)\\
=&\int_s^t (S(\xi)-S(s))u_0 \otimes_V d\omega(\xi)\\
&+\int_s^t
(S(\xi-s)-{\rm id})\bigg(\int_0^s S(s-r)G(u(r))d\omega (r)
\bigg)\otimes_V d\omega(\xi)
\\=:&B_{11}(s,t)+B_{12}(s,t).
\end{align*}
$B_1$ can be interpreted in Weyl's sense thanks to the regularity of its integrand, which means that
\begin{align*}
B_{11}(s,t)& =(-1)^\alpha \int_s^t
D^\alpha_{s+}((S(\cdot)-S(s))u_0)[\xi]\otimes_VD^{1-\alpha}_{t-}\omega_{t-}[\xi]d\xi.
\end{align*}
For $\alpha<\mu<1+\delta$, $\delta\leq 1$ and $s>0$, applying \eqref{eq1} and
\eqref{eq2} we have
\begin{align*}
    |D^\alpha_{s+}((S(\cdot)-S(s))u_0)[\xi]|&\le \frac{1}{\Gamma(1-\alpha)}\bigg(\frac{|(S(\xi)-S(s))u_0|}{(\xi-s)^{\alpha}}+\alpha\int_s^\xi
    \frac{|(S(\xi)-S(q))u_0|}{(\xi-q)^{1+\alpha}}dq\bigg)\\
    &\le c\bigg((\xi-s)^{\delta-\alpha}
    +\int_s^\xi\frac{(\xi-q)^\mu}{(\xi-q)^{1+\alpha}(q-s)^{\mu-\delta}}dq\bigg)|u_0|_{V_\delta}\\
    &\le c(\xi-s)^{\delta-\alpha}|u_0|_{V_\delta}.
\end{align*}
From the last inequality and \eqref{q1}, for $s>0$, it follows that
\begin{align*}
|B_{11}(s,t)|&  \leq c \|\omega\|_{\beta^\prime}
|u_0|_{V_\delta} \int_s^t
(\xi-s)^{\delta-\alpha}(t-\xi)^{\beta^\prime+\alpha-1} d\xi,
\end{align*}
and Lemma \ref{l1-m} immediately implies
\begin{align*}
\|B_{11}\|_{\beta+\beta^\prime}  \leq C |u_0|_{V_{\delta}}T^{\delta-\beta}.
\end{align*}
Besides, note that
\begin{align*}
& D_{s+}^\alpha \bigg((S(\cdot-s)-{\rm id})\int_0^s
S(s-r)G(u(r))d\omega
(r)\bigg)[\xi] \\
&\leq C \bigg(  \frac{(S(\xi-s)-{\rm id}) (\int_0^s S(s-r)G(u(r))d\omega
(r)) }{(\xi-s)^\alpha} + \int_s^\xi \frac{\int_0^s
(S(\xi-r)-S(q-r))G(u(r))d\omega(r)}{(\xi-q)^{1+\alpha}}dq\bigg).
\end{align*}
For the second expression on the right-hand side, by Lemma
\ref{ex1} (ii)  for $\alpha<\mu<1$ and $\delta^\prime\le\delta$,  and Remark \ref{r13}, we have
\begin{align*}
\int_s^\xi& \frac{|\int_0^s (S(\xi-q)-{\rm id})S(q-r)G(u(r))d\omega(r)|}{(\xi-q)^{1+\alpha}}dq \\
&\leq  \int_s^\xi \frac{|(-A)^{\mu-\delta^\prime}S(q-s)|}{(\xi-q)^{1+\alpha-\mu}}\bigg|\int_0^s S(s-r)G(u(r))d\omega(r)\bigg|_{V_{\delta^\prime}}dq\\
&\leq c \int_s^\xi \frac{|(-A)^{\mu-\delta^\prime}S(q-s)|}{(\xi-q)^{1+\alpha-\mu}}\bigg|\int_0^s S(s-r)G(u(r))d\omega(r)\bigg|_{V_{\delta}}dq\\
& \leq C s^{\beta^\prime}  (1+s^{2\beta}|||U|||^2)
\int_s^\xi
\frac{(q-s)^{\delta^\prime-\mu}}{(\xi-q)^{1+\alpha-\mu}}dq \leq C s^{\beta^\prime} (1+s^{2\beta}|||U|||^2)
(\xi-s)^{\delta^\prime-\alpha},
\end{align*}
then in particular we have
\begin{align*}
\|B_{12}\|_{\beta+\beta^\prime}\leq
CT^{\beta^\prime +\delta^\prime-\beta} (1+T^{2\beta}|||U|||^2)\leq
CT^{\beta^\prime-\beta} (1+T^{2\beta}|||U|||^2).
\end{align*}
Finally, the same estimate follows for $B_2$ and $B_3$. In order to see this, note that $B_2$ and $B_3$ can be considered to be, respectively, the first and second integral on the right-hand side of \eqref{eq24}, and then, evaluating respectively the Hilbert-Schmidt operators $D_{t-}^{1-\alpha}(\omega\otimes_S\omega)(\cdot, t)_{t-}[r]$ and
$D_{t-}^{1-\alpha}
\dD_{t-}^{1-\alpha}(u\otimes(\omega\otimes_S\omega)(t))[r]$ we can obtain \begin{align*}
&\|B_{2}\|_{\beta+\beta^\prime}\leq
CT^{\beta^\prime-\beta} (1+T^{2\beta}|||U|||^2),\\
&\|B_{3}\|_{\beta+\beta^\prime}\leq
C T^{\beta^\prime-\beta} (T^\beta |||U|||+T^{2\beta}|||U|||^2)\leq
CT^{\beta^\prime-\beta} (1+T^{2\beta}|||U|||^2).
\end{align*}
\end{proof}

Now we establish a result related to the contraction property of $\mathcal T$:
\begin{lemma}\label{ex3}
Suppose that Hypothesis ${\bf H}$ holds. Then for any $1\geq \delta\,\geq \beta$ there exists $C>0$ such that for $T\in [0,1]$ and $U^1=(u^1,v^1),\,U^2=(u^2,v^2)\in W(0,T)$ with $u^1(0)=u_0^1,\,u^2(0)=u_0^2\in V_\delta$,
we have that
\begin{align*}
&|||{\mathcal T}(U^1)-{\mathcal T}(U^2)||| \le CT^{\beta^\prime-\beta}(1+T^{2\beta}(|||U^1|||^2+|||U^2|||^2))(|||U^1-U^2|||+|u_0^1-u_0^2|)+T^{\delta-\beta}|u_0^1-u_0^2|_{V_\delta}.
\end{align*}
In addition, for the first component $\mathcal T_1$ of the mapping we get
\begin{align*}
&|{\mathcal T}_1(U^1)(T)-{\mathcal T}_1(U^2)(T)|_{V_\delta} \le CT^{\beta^\prime}(1+T^{2\beta}(|||U^1|||^2+|||U^2|||^2))(|||U^1-U^2|||+|u_0^1-u_0^2|_{V_\delta})+c|u_0^1-u_0^2|_{V_\delta}.
\end{align*}

\end{lemma}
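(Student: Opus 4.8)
The plan is to follow closely the proofs of Lemmas \ref{ex1} and \ref{ex2}, replacing at each step the boundedness bounds for $G$, $DG$, $D^2G$ by the corresponding Lipschitz-type estimates of Lemmas \ref{l2} and \ref{l3}. Concretely, I would write ${\mathcal T}_1(U^i)$ as the sum of the semigroup part $S(\cdot)u_0^i$ and the two integrals in \eqref{equ1}, and ${\mathcal T}_2(U^i)$ as the sum $B_1^i+B_2^i+B_3^i$ of the three addends of \eqref{equ2}, with $B_1^i$ split further exactly as in the proof of Lemma \ref{ex2} using $u^i(s)=S(s)u_0^i+\int_0^sS(s-r)G(u^i(r))d\omega(r)$. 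It then suffices to estimate the difference of the two copies term by term.

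The elementary mechanism throughout is the telescoping identity $a_1b_1-a_2b_2=(a_1-a_2)b_1+a_2(b_1-b_2)$ (and its trilinear analogue) applied inside each integral: one factor is differenced while the remaining one or two factors are estimated by exactly the bounds already used in Lemmas \ref{ex1}, \ref{ex2}, \ref{lex7} and \ref{l7} (in particular \eqref{q1}, \eqref{q3}, \eqref{eq26}, \eqref{eq28}, \eqref{neu1}, \eqref{neu2}). The key inputs for the differenced factors are, first, that from $u^i(0)=u_0^i$ one has
\begin{equation*}
|u^1(r)-u^2(r)|\le |u_0^1-u_0^2|+T^\beta\|u^1-u^2\|_\beta,\qquad |(u^1-u^2)(r)-(u^1-u^2)(s)|\le \|u^1-u^2\|_\beta(r-s)^\beta,
\end{equation*}
so that the $u$-increments acting as base points of the compensated fractional derivatives and of $(u\otimes(\omega\otimes_S\omega))$ are controlled by $|||U^1-U^2|||$ alone; and second, that the Lipschitz estimates of Lemmas \ref{l2} and \ref{l3} yield, e.g., $\|G(u^1(\cdot))-G(u^2(\cdot))\|\le c_{DG}|u^1(\cdot)-u^2(\cdot)|$ together with the analogous bounds for the differences of $DG(u^i(\cdot))$ and of $G(u^i)-DG(u^i)(u^i-u^i(s),\cdot)$, each producing a factor $c(|||U^1-U^2|||+|u_0^1-u_0^2|)$ times a harmless factor bounded by $1+T^\beta(|||U^1|||+|||U^2|||)$. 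For the $\hat D^\alpha_{s+}$-terms one uses the identity \eqref{rel} to reduce to these Lipschitz bounds; the factors $(\omega\otimes_S\omega)$ and $\omega_S$ carry no $U$-dependence and are left untouched.

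The one genuinely new estimate needed is the difference bound for the tensor, namely
\begin{equation*}
\|\big((u^1\otimes(\omega\otimes_S\omega)(t))-(u^2\otimes(\omega\otimes_S\omega)(t))\big)(s,q)\|_{L_2(L_2(V\times V,V_\delta),V\otimes V)}\le c(|||U^1-U^2|||+|u_0^1-u_0^2|)(q-s)^{\beta+\beta^\prime}(t-s)^{\beta^\prime},
\end{equation*}
together with the corresponding bound for $D_{t-}^{1-\alpha}\dD_{t-}^{1-\alpha}$ of this difference. Both follow by re-running the proof of Lemma \ref{l7} in ``difference form'': the defining relation \eqref{eq131bis} is affine in $v$ and its $u$-dependence enters only through the increments $u(\cdot)-u(s)$ and through $\omega_S$-type factors independent of $U$, so differencing produces only terms already estimated in Lemma \ref{l7}, and the passage from \eqref{neu1} to \eqref{neu2} again uses the (linear) Chen identity \eqref{chen3fold}. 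The semigroup part contributes $\|S(\cdot)(u_0^1-u_0^2)\|_\beta\le CT^{\delta-\beta}|u_0^1-u_0^2|_{V_\delta}$ for the first estimate (cf. Remark \ref{r13}) and $|S(T)(u_0^1-u_0^2)|_{V_\delta}\le c|u_0^1-u_0^2|_{V_\delta}$ for the second; collecting the factors $1+T^\beta(|||U^1|||+|||U^2|||)$ and squaring where two of them multiply yields the prefactor $1+T^{2\beta}(|||U^1|||^2+|||U^2|||^2)$, while each integral carries a power of $T$ no smaller than $T^{\beta^\prime-\beta}>0$ (respectively $T^{\beta^\prime}$ in the $V_\delta$ estimate), which gives the claimed inequalities.

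The main obstacle is purely the bookkeeping: one must check that after differencing every summand retains the correct homogeneity in $U$ (so the prefactor stays quadratic, not cubic), and that the $u_0$-dependence buried inside $u(s)$ — expanded via the mild-solution formula as in the proof of Lemma \ref{ex2} — feeds back only the linear terms $|u_0^1-u_0^2|$ and $|u_0^1-u_0^2|_{V_\delta}$ and nothing worse. The most delicate single point is the tensor difference estimate above, since $(u\otimes(\omega\otimes_S\omega))$ is defined only implicitly by \eqref{eq131bis}; there the affine structure in $v$, the linear dependence on the $u$-increments, and the linearity of the Chen equality \eqref{chen3fold} are exactly what make the argument close with the same homogeneity as in Lemma \ref{l7}.
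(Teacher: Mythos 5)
Your proposal is correct and follows essentially the same route as the paper's own (very brief) proof: the paper likewise reduces everything to the decompositions of Lemmas \ref{ex1} and \ref{ex2}, replaces $G(u)$, $DG(u)$ and the Taylor remainders by their differences, and controls these via the Lipschitz-type bounds of Lemmas \ref{l2} and \ref{l3} together with $|u^1(r)-u^2(r)|\le \|\Delta u\|_\beta T^\beta+|u_0^1-u_0^2|$. Your write-up is in fact more explicit than the paper's sketch, notably in isolating the tensor-difference estimate for $(u\otimes(\omega\otimes_S\omega))$, which the paper leaves implicit.
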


\begin{proof}
Trivially, $\|S(\cdot) u_0^1-S(\cdot) u_0^2\|_{\beta}\leq T^{\delta-\beta}|u_0^1-u_0^2|_{V_\delta}.$

We only give an idea of the proof, since this result follows in a similar manner as the previous Lemmata \ref{ex1} and \ref{ex2} by doing some changes in the integrals representing the equation \eqref{equ1}  as well as in the integrals related to the area equation \eqref{equ2}. In particular, in the fractional derivatives containing $G(u)$ we should replace it by $G(u^1)-G(u^2)$. Denoting $\Delta u=u^1-u^2$, on account of Lemma \ref{l2}, it holds
\begin{align*}
    \|G(u^1(r))-G(u^2(r))\|_{L_2(V,V_\delta)}&\le c_{DG}(|u^1(r)-u_0^1-u^2(r)+u_0^2|+|u_0^1-u_0^2|)\\
    &\le c_{DG}\bigg(\sup_{0\le q<r\le T}\frac{|u^1(r)-u^2(r)-(u^1(q)-u^2(q))|}{|r-q|^{\beta}}T^\beta+|u_0^1-u_0^2|\bigg)\\&=c_{DG}(\|\Delta u\|_\beta T^\beta+|u_0^1-u_0^2|)
\end{align*}
and similar for $DG(u(r))$. Moreover, using the first inequality of Lemma \ref{l3} we get
\begin{align*}
    \|DG(u^1(r))-DG(u^2(r))&-(DG(u^1(q))-DG(u^2(q)))\|_{L_2(V,V_\delta)}\\
    &\le c_{D^2G}\|\Delta u\|_{\beta}|r-q|^\beta+c_{D^3G}(\|\Delta u\|_{\beta}
   T^\beta+|u_0^1-u_0^2|) (\|u^1\|_\beta+\|u^2\|_{\beta}) |r-q|^\beta
    \end{align*}
and, using the last inequality of Lemma \ref{l3},
\begin{align*}
    \|G(u^1(r))&-G(u^1(q))-DG(u^1(q))(u^1(r)-u^1(q))
    \\&-(G(u^2(r))-G(u^2(q))-DG(u^2(q))(u^2(r)-u^2(q)))\|_{L_2(V,V_\delta)}\\
    &\le c_{D^2G}
    (\|u^1\|_\beta+\|u^2\|_\beta)\|\Delta u\|_\beta|r-q|^{2\beta}\\
    &+c_{D^3G}\|u^2\|_\beta|r-q|^\beta(\|\Delta u\|_\beta T^\beta+|u_0^1-u_0^2|)(2\|u^1\|_\beta+\|u^2\|_\beta)|r-q|^\beta.
\end{align*}
We also note that  the expressions $(u^i\otimes(\omega\otimes_S\omega))$ are linear with respect to $U^i$. Hence by Lemma
\ref{l7} we obtain
\begin{align*}
&\|D_{t-}^{1-\alpha}
\dD_{t-}^{1-\alpha} ((u^1\otimes(\omega\otimes_S\omega)(t))-(u^2\otimes(\omega\otimes_S\omega)(t)))[r]\|_{L_2(L_2(V\otimes V,V_\delta),V\otimes V)}  \leq c |||U^1-U^2||| (t-r)^{\beta+2\beta^\prime+2\alpha-2}.
\end{align*}

To obtain the second statement of this result we would need to use the previous estimates and follow similar steps than in the proof of Lemma \ref{ex1} (ii), see Appendix Section.
\end{proof}

\begin{remark}\label{ic}
We want to stress that in the previous result we have compared $\mathcal T(U^1)$ with $\mathcal T(U^2)$ by using the $|||\cdot|||$-seminorm. As we already mentioned, we could add the $V$-norm of the initial condition to the $|||\cdot|||$-seminorm. However, in practice this is not necessary, except when the previous result is used with $U^1$ and $U^2$ having different initial conditions, which will happens only in Theorem \ref{rem10} and Lemma \ref{l81} below. In the latter results in fact we have a sequence of initial conditions which converges and therefore it suffices to consider the seminorm.
\end{remark}

In Theorem \ref{rem10} and the Appendix section we will need to apply the previous lemma when having $(u,v)$ driven by $\omega$, and $(u,u\otimes \omega^n)$ driven by $\omega^n$. This is the reason to explain next what happens in this particular situation, which is not included above since the driving noises are different.

In the following results we indicate the dependence of $\tT$ on
$u_0\in V_\delta,\omega$ and $(\omega\otimes_S\omega)$ by $\tT(U,\omega,(\omega\otimes_S\omega),u_0)$.
\begin{lemma}\label{ex3bis}
Suppose Hypothesis ${\bf H}$ holds.
Then we have for any $K>0$  that
\begin{align*}
&\lim_{n\to\infty}\sup\{|||\tT(U,\omega,(\omega\otimes_S\omega),u_0)-\tT(U,\omega^n,(\omega^n\otimes_S\omega^n),u_0)|||:|||U|||\le K,\,|u_0|_{V_\delta}\le K\}=0.
\end{align*}
\end{lemma}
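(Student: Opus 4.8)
The plan is to exploit that, for a \emph{fixed} pair $U=(u,v)$ and fixed $u_0$, the operator $\tT$ depends on the driving data only through finitely many ``noise functionals'', all of which enter the estimates already proved (Lemmata~\ref{lex7}, \ref{l7}, \ref{ex1}, \ref{ex2}) linearly, bilinearly or trilinearly, with coefficients polynomial in $\|u\|_\beta$, $\|v\|_{\beta+\beta^\prime}$ and $|u_0|_{V_\delta}$. Concretely, $\omega$ enters \eqref{equ1}--\eqref{equ2} only through $D^{1-\alpha}_{t-}\omega_{t-}$, $(\omega\otimes_S\omega)$ enters directly, and the auxiliary objects $\omega_S$, $E\mapsto\int_s^\tau S(\tau-r)E\,d\omega(r)$ (Lemma~\ref{lex7}(ii)), $(\omega_S(t)\otimes\omega)$ (see \eqref{xxxeqbis}) and $(u\otimes(\omega\otimes_S\omega)(t))$ (see \eqref{eq131bis}) are built from $\omega$ and $(\omega\otimes_S\omega)$, again multilinearly. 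Accordingly I would fix $\beta^\prime<\beta^{\prime\prime}<H$, set
\begin{align*}
M_n&:=\|\omega\|_{\beta^{\prime\prime}}+\|\omega^n\|_{\beta^{\prime\prime}}+\|(\omega\otimes_S\omega)\|_{2\beta^\prime}+\|(\omega^n\otimes_S\omega^n)\|_{2\beta^\prime},\\
\varepsilon_n&:=\|\omega-\omega^n\|_{\beta^{\prime\prime}}+\|(\omega\otimes_S\omega)-(\omega^n\otimes_S\omega^n)\|_{2\beta^\prime},
\end{align*}
and aim for the single estimate
\begin{align*}
|||\tT(U,\omega,(\omega\otimes_S\omega),u_0)-\tT(U,\omega^n,(\omega^n\otimes_S\omega^n),u_0)|||\le P(K,M_n)\,\varepsilon_n,
\end{align*}
valid for a fixed polynomial $P$ and all $U\in W(0,1)$ with $|||U|||\le K$, $|u_0|_{V_\delta}\le K$. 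Since Hypothesis~${\bf H}$(5) holds for every exponent below $H$, one has $\varepsilon_n\to0$ and $\sup_nM_n<\infty$, so this estimate yields the claim at once.

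For the first component I would observe that, $U$ and $u_0$ being common to both terms, in \eqref{equ1} the summand $S(t)u_0$ and the integral containing $\dD^{1-\alpha}_{t-}v$ cancel in the difference, leaving
\begin{align*}
\tT_1(U,\omega,\ldots)(t)-\tT_1(U,\omega^n,\ldots)(t)=(-1)^\alpha\int_0^t\hat D^\alpha_{0+}(S(t-\cdot)G(u(\cdot)))[r]\,D^{1-\alpha}_{t-}(\omega-\omega^n)_{t-}[r]\,dr.
\end{align*}
Here the compensated-derivative factor is bounded in $L_2(V,V_\delta)$ uniformly for $\|u\|_\beta\le K$ (boundedness of $G$, $DG$, Lemma~\ref{l2}), times an $r$-singularity integrable against $|D^{1-\alpha}_{t-}(\omega-\omega^n)_{t-}[r]|\le c\|\omega-\omega^n\|_{\beta^\prime}(t-r)^{\alpha+\beta^\prime-1}$ (that is \eqref{q1} applied to $\omega-\omega^n$); running through the computation that produced \eqref{ex4m} then gives $\|\tT_1(U,\omega,\ldots)-\tT_1(U,\omega^n,\ldots)\|_\beta\le C(1+K^2)\varepsilon_n$.

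For the second component I would decompose $\tT_2(U,\omega,\ldots)(s,t)=B_1+B_2+B_3$ as in the proof of Lemma~\ref{ex2} and form $B_i-B_i^n$. One finds $B_1-B_1^n=\int_s^t(S(\xi-s)-{\rm id})u(s)\otimes_Vd(\omega-\omega^n)(\xi)$, estimated exactly as $B_1$ there (the integrator is now $\omega-\omega^n$; in the intended applications $u$ is a mild solution, so $\sup_s|u(s)|_{V_\delta}\le C(K)$ by Lemma~\ref{ex1}(ii) and the splitting of $u(s)$ used there is available), giving $\|B_1-B_1^n\|_{\beta+\beta^\prime}\le C(M_n,K)\|\omega-\omega^n\|_{\beta^\prime}$; and $B_2-B_2^n=-(-1)^\alpha\int_s^t\hat D^\alpha_{s+}G(u(\cdot))[r]D^{1-\alpha}_{t-}((\omega\otimes_S\omega)-(\omega^n\otimes_S\omega^n))(\cdot,t)_{t-}[r]\,dr$, estimated as $B_2$ with $(\omega\otimes_S\omega)$ replaced by the difference, giving $\|B_2-B_2^n\|_{\beta+\beta^\prime}\le C(1+K^2)\|(\omega\otimes_S\omega)-(\omega^n\otimes_S\omega^n)\|_{2\beta^\prime}$. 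The remaining term $B_3-B_3^n$ requires bounding $(u\otimes(\omega\otimes_S\omega)(t))-(u\otimes(\omega^n\otimes_S\omega^n)(t))$: here I would use \eqref{eq131bis}, \eqref{xxxeqbis} together with the telescoping identity $\prod_ja_j-\prod_jb_j=\sum_j(\prod_{i<j}b_i)(a_j-b_j)(\prod_{i>j}a_i)$ to write this difference as a finite sum of terms each carrying exactly one factor among $\omega-\omega^n$, $(\omega\otimes_S\omega)-(\omega^n\otimes_S\omega^n)$, $\omega_S-\omega_S^n$, $I(\cdot)-I^n(\cdot)$, $(\omega_S(t)\otimes\omega)-(\omega_S^n(t)\otimes\omega^n)$, all other factors being the corresponding $\omega$- or $\omega^n$-objects; each of these five differences obeys the very bound of Lemma~\ref{lex7}, respectively of the estimates inside the proof of Lemma~\ref{l7}, with one noise norm replaced by the matching difference norm, so that $(u\otimes(\omega\otimes_S\omega)(t))-(u\otimes(\omega^n\otimes_S\omega^n)(t))$ satisfies the analogues of \eqref{neu1}--\eqref{neu2} with a factor $\varepsilon_n$ in place of one power of the noise norm. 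Inserting this into the $B_3$-estimate of Lemma~\ref{ex2} gives $\|B_3-B_3^n\|_{\beta+\beta^\prime}\le P_3(K,M_n)\varepsilon_n$, and adding the four bounds produces the desired estimate.

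The main obstacle, as the last step makes plain, is the bookkeeping for $B_3$: one must propagate the noise difference along the chain $\omega,\,(\omega\otimes_S\omega)\rightsquigarrow\omega_S\rightsquigarrow I(\cdot)\rightsquigarrow(\omega_S(t)\otimes\omega)\rightsquigarrow(u\otimes(\omega\otimes_S\omega)(t))$ while tracking which Hölder exponents ($\beta^\prime$, $\beta^{\prime\prime}$, $2\beta^\prime$) are needed at each link and checking that Hypothesis~${\bf H}$(5) indeed supplies convergence in all of them. The individual estimates at each link are, however, precisely the ``difference versions'' of inequalities already established in Lemmata~\ref{lex7}, \ref{l7} and \ref{ex2}, so no new analytic input is needed beyond careful telescoping.
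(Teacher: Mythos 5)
Your proposal is correct and follows essentially the same route as the paper's (much briefer) sketch: exploit multilinearity in the noise data, note that $S(t)u_0$ and the $v$-integral in \eqref{equ1} cancel so that only the integral against $d(\omega-\omega^n)$ survives, treat the $(\omega\otimes_S\omega)$-integral of \eqref{equ2} directly via Hypothesis {\bf H}(5), and telescope the noise difference through the chain of auxiliary objects to bound $(u\otimes(\omega\otimes_S\omega)(t))-(u\otimes(\omega^n\otimes_S\omega^n)(t))$ — the paper carries out exactly one representative term of that telescoping (the one coming from $\omega_S(t)\otimes\omega$) and asserts the rest are similar. Your write-up is in fact more systematic about the bookkeeping (the uniform polynomial bound $P(K,M_n)\varepsilon_n$ and the explicit list of the five elementary differences) than the paper's own proof.
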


\begin{proof}
We only sketch the proof.
For the first integral on the right-hand side of \eqref{equ1} we obtain the estimate $2c(1+K^2)\|\omega^n-\omega\|_{\beta^\prime}$
which tends to zero for $n\to\infty$ by our Hypothesis {\bf H}. Note that the second integral of \eqref{equ1} is just zero.

Regarding \eqref{equ2}, the estimate of the second integral is straightforward thanks to Hypothesis {\bf H}, item (4).
Consider finally the first and the third integral of \eqref{equ2} which can be rewritten as
\begin{align*}
    &\int_s^t(S(\xi-s)-{\rm id})u(s)\otimes_Vd(\omega(\xi)-\omega^n(\xi)),
\end{align*}
and
\begin{align*}
   (-1)^{2\alpha-1}\int_s^t D_{s+}^{2\alpha-1} DG(u(\cdot))[r] D_{t-}^{1-\alpha}\dD_{t-}^{1-\alpha} ((u\otimes (\omega \otimes_S \omega)(t))- (u\otimes (\omega^n \otimes_S \omega^n)(t))) [r] dr,
\end{align*}
respectively. Recall that $(u\otimes(\omega\otimes_S\omega))$ is a function depending on $U=(u,v),\omega$ and $(\omega\otimes_S\omega)$. Let us pick one of the terms we have to estimate, for instance, the first term appearing in the expression of $\omega_S(t)\otimes\omega$ (see (\ref{xxxeqbis})):
\begin{align*}
    &\|(\omega^n)_S(\tau,t)\int_s^\tau S(\tau-r) Ed\omega^n-\omega_S(\tau,t)\int_s^\tau S(\tau-r)Ed\omega\| \\
    \le & c(t-\tau)^{\beta^\prime}(\tau-s)^{\beta^\prime}\|E\|_{L_2(V,V_\delta)}(\|\omega-\omega^n\|_{\beta^\prime}(\|\omega\|_{\beta^{\prime\prime}}+\|\omega\|_{\beta^\prime})+\|\omega^n\|_{\beta^\prime}(\|\omega-\omega^n\|_{\beta^{\prime\prime}}+\|\omega-\omega^n\|_{\beta^\prime})),
\end{align*}
which converges to zero. In a similar manner we can estimate the other terms such that we have
\begin{equation*}
\|(u\otimes (\omega \otimes_S \omega)(t))(s,q)-(u\otimes (\omega^n \otimes_S \omega^n)(t))(s,q)\|_{L_2(L_2(V\otimes V,V_\delta),V\otimes V)}\le C_n|||U||| (q-s)^{\beta+\beta^\prime}(t-s)^{\beta^\prime}.
\end{equation*}
The constant $C_n$ depends on $\|\omega^n-\omega\|_{\beta^\prime}$ and $\|(\omega^n\otimes_S\omega^n)-(\omega\otimes_S \omega)\|_{2\beta^\prime}$ and other terms which ensure that $(C_n)_{n\in\NN}$
 converges to zero for $n\to\infty$.

\end{proof}

\begin{theorem}\label{tex1}
Assume $U^1,\,U^2$ are two solutions of the system \eqref{equ1}-\eqref{equ2} in $W(0,1)$ with initial condition $u_0\in V_\delta$ for $1\geq \delta \geq \beta$. Then $U^1=U^2$.
\end{theorem}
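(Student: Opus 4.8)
The plan is to derive uniqueness on all of $[0,1]$ from a local uniqueness statement on a short interval, iterated with a fixed step size; the three ingredients are Lemma \ref{ex3} (the Lipschitz estimate for $\mathcal T$), the finiteness of the seminorms of any element of $W(0,1)$, and the ``restart'' property of mild solutions. Write $R:=\max\{|||U^1|||,|||U^2|||\}<\infty$, the seminorms being those of $W(0,1)$. Since passing to a subinterval only shrinks the sets over which the defining suprema run, $|||U^i|||_{W(T_1,T_2)}\le R$ for every $[T_1,T_2]\subseteq[0,1]$.

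First I would fix $T_0\in(0,1]$ so small that $C\,T_0^{\beta^\prime-\beta}(1+2R^2T_0^{2\beta})<1/2$, where $C$ is the constant of Lemma \ref{ex3}. On the interval $[0,T_0]$ the system \eqref{equ1}--\eqref{equ2} is just the restriction of the one on $[0,1]$ (all integrals there are based at $0$), so $U^1$ and $U^2$ are fixed points of $\mathcal T$ on $W(0,T_0)$ with the common initial datum $u_0^1=u_0^2=u_0$ (note $u^i(0)=S(0)u_0=u_0$). Applying Lemma \ref{ex3} on $W(0,T_0)$, the terms carrying $|u_0^1-u_0^2|$ disappear and we obtain $|||U^1-U^2|||_{W(0,T_0)}\le\frac12|||U^1-U^2|||_{W(0,T_0)}$, hence $|||U^1-U^2|||_{W(0,T_0)}=0$. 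From $\|u^1-u^2\|_\beta=0$ and $u^1(0)=u^2(0)$ we get $u^1\equiv u^2$ on $[0,T_0]$, and $\|v^1-v^2\|_{\beta+\beta^\prime}=0$ forces $v^1\equiv v^2$ on $\Delta_{0,T_0}$; thus $U^1=U^2$ on $[0,T_0]$.

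Next I would propagate this with the same step $T_0$. Assuming $U^1=U^2$ on $[0,\tau]$ with $\tau=kT_0\wedge1<1$, the restart property (cf. Remark \ref{exr2} and the concatenation of local solutions announced in the Introduction) says that $U^1$ and $U^2$, restricted to $[\tau,(\tau+T_0)\wedge1]$, solve the system of Definition \ref{deff} on that interval with the common initial value $u^1(\tau)=u^2(\tau)$. Lemma \ref{ex3} holds verbatim on any interval of length $\le T_0$ — only the interval length and the H\"older data of $\omega$ and $(\omega\otimes_S\omega)$ enter its constant — and the seminorms of $U^i$ on the subinterval are still $\le R$, so the computation of the previous paragraph repeats with the same prefactor $<1/2$ and gives $U^1=U^2$ on $[\tau,(\tau+T_0)\wedge1]$. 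After $\lceil1/T_0\rceil$ steps $[0,1]$ is exhausted, and $U^1=U^2$ in $W(0,1)$.

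The only non-routine point is the restart property invoked in the propagation step: that a solution of \eqref{equ1}--\eqref{equ2} on $[0,1]$, restricted to $[\tau,1]$, again solves such a system with initial value $u(\tau)$ at time $\tau$. For the area equation \eqref{equ2} this is immediate, since every integral occurring there — including those in the definition \eqref{eq131bis} of $(u\otimes(\omega\otimes_S\omega)(t))$ — is based at the lower time endpoint and hence only involves data on the subinterval. For \eqref{equ1} it reduces to the variation-of-constants identity $S(t)u_0+\int_0^\tau S(t-r)G(u(r))\,d\omega(r)=S(t-\tau)u(\tau)$ and to the additivity $\int_0^t=\int_0^\tau+\int_\tau^t$ of the integral $\int_s^t S(t-r)G(u(r))\,d\omega(r)$ (which, by the convention introduced after Definition \ref{deff}, denotes the sum of the two integrals in \eqref{equ1}); this additivity rests on the semigroup law and on the compatibility of $v$ with its restrictions encoded by the Chen equality \eqref{chenbis}, and is precisely what makes the concatenation of local solutions legitimate. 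I expect this to be the main obstacle; once it is in hand, the uniformity of the step $T_0$ — guaranteed by the a priori bound $R$ — closes the iteration.
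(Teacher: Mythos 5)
Your proof takes a genuinely different route from the paper's. The paper argues by approximation: by Lemma \ref{l81} and Hypothesis {\bf H}, item (5), each $U^i$ is the limit in $W(0,1)$ of the solutions $U_n$ of the smooth-noise equation \eqref{eq0} driven by $\omega^n$ with the same initial datum $u_0$; since the classical mild solution for smooth noise is unique (Lemma \ref{l6}), the two approximating sequences coincide, and $|||U^1-U^2|||\le |||U^1-U_n|||+|||U_n-U^2|||\to 0$. Your argument is the direct one --- local contraction via Lemma \ref{ex3} with a step size fixed by the a priori bound $R$, then iteration. Its first step is fully rigorous: the restrictions of $U^1,U^2$ to $[0,T_0]$ are fixed points of $\mathcal{T}$ on $W(0,T_0)$ with a common initial datum, their seminorms are still bounded by $R$, and Lemma \ref{ex3} forces $|||U^1-U^2|||_{W(0,T_0)}\le \tfrac12\,|||U^1-U^2|||_{W(0,T_0)}$, hence equality on $[0,T_0]$.

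The gap is exactly where you flag it, and it is a genuine one. The propagation step needs the restart property, i.e.\ for $\tau\le t$ the identity $u(t)=S(t-\tau)u(\tau)+\int_\tau^t S(t-r)G(u(r))\,d\omega(r)$, which (the commutation of $S(t-\tau)$ with the integral being harmless) amounts to the additivity $\int_0^t=\int_0^\tau+\int_\tau^t$ for the rough integral of \eqref{equ1}. This is not a formal consequence of ``the semigroup law and the Chen equality'': when the base point of $\hat D_{s+}^{\alpha}$ and $D_{s+}^{2\alpha-1}$ is moved from $0$ to $\tau$, both the compensated derivative and the $v$-correction term change, and one has to verify that the resulting cross terms cancel by means of \eqref{chenbis} together with identities of the type \eqref{rel}--\eqref{rel2}. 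That computation is carried out nowhere in the paper for non-smooth $\omega$; even the concatenation statement (Theorem \ref{rem10}), which is the converse direction, is proved there by smooth approximation rather than by a direct identity. So, as written, your proof asserts the key identity rather than proving it. It can be repaired in two ways: either carry out the fractional-calculus computation, or obtain the additivity by passing to the limit in the smooth approximations of Lemma \ref{l81}, where it is trivial --- but the latter essentially collapses into the paper's proof, since once you know $U_n\to U^i$ with the \emph{same} sequence $U_n$ for $i=1,2$ you are already done without any iteration.
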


\begin{proof}
Suppose  $U^1\not=U^2\in W(0,1)$. Then there is a $\rho$ such that $|||U^1-U^2|||=\rho>0$.
By Lemma \ref{l81} (see Appendix) and Hypothesis {\bf H}, item (4), we can approximate these solutions by sequences $(U_n^1)_{n\in\NN},\,(U_n^2)_{n\in\NN}$
having the initial condition $u_0$, being $U_n^i=(u_n^i,v_n^i)$ where $u_n^i$ is given by \eqref{eq0} and $v_n^i$ has the interpretation of \eqref{ns}
driven by smooth $\omega^n$ and $(\omega^n\otimes_S\omega^n)$, such that for sufficiently large $n$ we have that $|||U^i-U_n^i|||<\rho/2$ for $i=1,2$.
However, we have $u_n^1=u_n^2$ and hence $U_n^1=U_n^2$ which contradicts $|||U^1-U^2|||=\rho>0$.
\end{proof}

For $0<a<b$ we consider the {\em concatenation} of elements of $W(0,a)$ and $W(a,b)$. We have to take into account that the elements of these function spaces consists of a path component and an area component. We have to define the concatenation of the area component in agreement with the Chen equality.
Let $U^1=(u^1,v^1)\in W(0,a)$ such that $u^1(0)\in V_\delta$, for $\delta\in [0,1]$ and $U^2=(u^2,v^2)\in  W(a, b)$ such that $u^2(a)=u^1(a)$, for $0\le a<b\le 1$. Define $U=(u,v)$ as follows:
\begin{align*}
    &u(t)=\left\{\begin{array}{lcl}
    u^1(t)&:& 0\le t \le a\\
    u^2(t)&:& a\le t\le b
    \end{array}\right.\\
    &v(s,t)=\left\{\begin{array}{lcr}
    v^1(s,t)&:& 0\le s\le t \le a\\
    v^2(s,t)&:& a\le s\le t\le b\\
    (u^1(a)-u^1(s))\otimes_V(\omega(t)-\omega(a))+v^1(s,a)+v^2(a,t)&:& s\le a<t.
    \end{array}\right.
\end{align*}

\begin{remark}\label{r10}
Suppose that $U^1,\,U^2$ satisfy the above conditions and that $\omega\in C_{\beta^\prime}([0,b];V)$.
Then the concatenation of  $\,U^1$ and  $\,U^2$ is in $W(0,b)$.
\end{remark}
Indeed by the definition it follows that this concatenation is H{\"o}lder continuous with respect to the H{\"o}lder exponents
used in this article.
To see that the Chen equality holds for say $0\le s<r<a<t\le b$ we note that
\begin{align*}
    v(s,r)&+v(r,t)+(u(r)-u(s))\otimes (\omega(t)-\omega(r))\\
    &=v^1(s,r)+v^1(r,a)+v^2(a,t)+(u^1(a)-u^1(r))\otimes (\omega(t)-\omega(a))+(u^1(r)-u^1(s))\otimes (\omega(t)-\omega(r))\\
    &=v^1(s,a)+v^2(a,t)+(u^1(a)-u^1(r))\otimes (\omega(t)-\omega(a))+(u^1(r)-u^1(s))\otimes (\omega(t)-\omega(a))=v(s,t)
\end{align*}
by the definition of the concatenation.

\begin{theorem}\label{tex2}
Suppose Hypothesis ${\bf H}$ holds, and that $\delta+\beta^\prime>1$, $\delta\leq 1,$
and $u_0\in V_\delta$. Then there exists a unique solution to system \eqref{equ1}-\eqref{equ2} in $W(0,1)$.
\end{theorem}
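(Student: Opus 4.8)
The plan is to construct the solution locally by a contraction mapping argument and then to cover the whole interval $[0,1]$ by concatenation, as announced in the introduction. First I would fix the initial datum $u_0\in V_\delta$ and a length $T\in(0,1]$ to be chosen, and work on the set of pairs $U=(u,v)\in W(0,T)$ with $u(0)=u_0$ obeying the Chen equality \eqref{chenbis}, which by Section \ref{preli} is a complete metric space for $|||\cdot|||$. I would then check that $\mathcal T$ maps this space into itself: $\mathcal T_1(U)(0)=S(0)u_0=u_0$; the tensor $(u\otimes(\omega\otimes_S\omega))$ entering \eqref{equ2} lies in $L_2(L_2(V\otimes V,V_\delta),V\otimes V)$ and satisfies its three-fold Chen equality by Lemma \ref{l7}; and $\mathcal T_2(U)$ fulfils \eqref{chenbis} by the validity of the Chen equality for $(u\otimes\omega)$ when $\omega$ is smooth (Lemma \ref{l8}) together with the approximation in Hypothesis {\bf H}, item (5). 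By Lemmas \ref{ex1}(i) and \ref{ex2},
\begin{equation*}
  |||\mathcal T(U)|||\le C\,T^{\delta-\beta}|u_0|_{V_\delta}+C\,T^{\beta^\prime-\beta}\bigl(1+T^{2\beta}|||U|||^2\bigr),
\end{equation*}
so with $R:=4C(|u_0|_{V_\delta}+1)$ and $T=T_0$ chosen small enough (in terms of $|u_0|_{V_\delta}$, $\omega$, $G$) that $C\,T_0^{\beta^\prime-\beta}(1+T_0^{2\beta}R^2)\le R/2$, the ball $\{|||U|||\le R\}$ is invariant under $\mathcal T$. After a further shrinking of $T_0$, Lemma \ref{ex3} applied with a common initial datum (so that the term $|u_0^1-u_0^2|_{V_\delta}$ drops out) makes $\mathcal T$ a contraction there, and Banach's fixed point theorem yields a local solution $U\in W(0,T_0)$, unique by the local form of Theorem \ref{tex1} (Remark \ref{exr2}).

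Next I would extend this local solution. By Lemma \ref{ex1}(ii) one has $\mathcal T_1(U)(T_0)\in V_\delta$ — this is where the extra hypothesis $\delta+\beta^\prime>1$ is used, namely to keep the variation-of-constants term inside $V_\delta$ — so the construction may be restarted at time $T_0$ with initial datum $u(T_0)$. The pieces are then glued by the concatenation recipe stated just before the theorem, which defines the area component exactly so that \eqref{chenbis} survives across the junction; hence the glued pair again belongs to the admissible space and solves \eqref{equ1}--\eqref{equ2} on the longer interval.

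The hard part will be to guarantee that this process reaches $t=1$ in finitely many steps. The length $T_0$ produced by the contraction argument decreases as $|u(\cdot)|_{V_\delta}$ grows, so one must rule out a finite-time blow-up of $|u(\cdot)|_{V_\delta}$ before $1$; since the estimates of Lemmas \ref{ex1}--\ref{ex2} are only quadratic in $|||U|||$, a plain Gronwall argument is not available. Instead I would exploit the boundedness of $G,DG,D^2G,D^3G$: on every subinterval of a fixed small length $\ell$ --- with $\ell$ depending only on $\|\omega\|_{\beta^\prime}$, the $2\beta^\prime$-norm of $(\omega\otimes_S\omega)$ and the bounds on $G$ --- the $\beta$- and $(\beta+\beta^\prime)$-seminorms of a solution are controlled by a universal constant, and patching these bounds over the at most $\lceil 1/\ell\rceil$ subintervals, absorbing the contribution of $u_0$ once and for all via Lemma \ref{ex1}(ii), yields an a priori bound $|||U|||_{W(0,a)}\le M(u_0,\omega,G)$ valid for any solution on $[0,a]\subseteq[0,1]$. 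With this bound the step length stays bounded below, so finitely many concatenations cover $[0,1]$ and produce a solution in $W(0,1)$; its uniqueness is exactly Theorem \ref{tex1}.
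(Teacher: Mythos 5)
Your local contraction step and the concatenation mechanism match the paper, but the global continuation argument has a genuine gap. You claim that on every subinterval of a \emph{fixed} length $\ell$, depending only on $\|\omega\|_{\beta^\prime}$, $\|(\omega\otimes_S\omega)\|_{2\beta^\prime}$ and the bounds on $G$, the seminorms of a solution are controlled by a universal constant, and that the initial datum can be ``absorbed once and for all'' via Lemma \ref{ex1}(ii). This is not so: when the construction is restarted at $T_{i-1}$, the estimate of Lemma \ref{ex1}(i) on $[T_{i-1},T_i]$ contains the free term $C\,\ell^{\delta-\beta}|u(T_{i-1})|_{V_\delta}$ coming from $\|S(\cdot-T_{i-1})u(T_{i-1})\|_\beta$, so both the invariant radius and the admissible step length depend on $|u(T_{i-1})|_{V_\delta}$ --- a quantity that Lemma \ref{ex1}(ii) only shows to \emph{increase} by $C\Delta T^{\beta^\prime}(1+\Delta T^{2\beta}|||U|||^2)$ at each step and for which no a priori bound is available. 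The boundedness of $G$ and its derivatives does not remove this term, so the claimed universal bound, and with it the lower bound on the step length, is unjustified as stated. (Also, the hypothesis $\delta+\beta^\prime>1$ is not needed merely to keep $u(T_0)$ in $V_\delta$; Lemma \ref{ex1}(ii) gives that unconditionally.)

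The paper closes exactly this gap by a quantitative bookkeeping rather than a uniform bound: it takes shrinking steps $\Delta T_i=(Ki)^{-1}$, proves inductively that $|u(T_i)|_{V_\delta}\le \rho_0+\sum_{j\le i}2C(Kj)^{-\beta^\prime}<(Ki)^{1-\beta^\prime}$, and verifies (inequalities \eqref{ex7}--\eqref{ex8}) that with a datum of size $(Ki)^{1-\beta^\prime}$ and a step of size $(Ki)^{-1}$ the quadratic equation \eqref{2eq} still has a real minor root and $\mathcal T$ is still a $\tfrac12$-contraction; this is precisely where $\delta+\beta^\prime>1$ enters, e.g.\ through $(Ki)^{1-2\beta^\prime-\delta}<1$. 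Since $\sum_i(Ki)^{-1}=\infty$, finitely many such steps cover $[0,1]$. Your outline needs either this balance between the growth rate $i^{1-\beta^\prime}$ of the $V_\delta$-norm and the harmonic step length, or an actual proof of an a priori bound on $\sup_{t\le 1}|u(t)|_{V_\delta}$; neither is supplied by the argument you sketch.
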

\begin{proof}
We start presenting a few trivial inequalities. Let $C$ be the common constant such that Lemma \ref{ex1}, \ref{ex2} and \ref{ex3} hold (note that the condition $\delta+\beta^\prime>1$ implies that $\delta \geq \beta$). For the following, we have that for any $\rho_0>0$ there is a $K(\rho_0)\geq 1$ such that for $K\ge K(\rho_0)$, $i\in\mathbb{N}$
\begin{equation}\label{ex7}
\rho_0+\sum_{j=1}^{i}2C(Kj)^{-\beta^\prime}= \rho_0+2CK^{-\beta^\prime}\sum_{j=1}^ij^{-\beta^\prime}\le \rho_0+2CK^{-\beta^\prime}\frac{1}{1-\beta^\prime} i^{1-\beta^\prime}< (Ki)^{1-\beta^\prime},
\end{equation}
and
\begin{align}\begin{split}\label{ex8}
    &4C^2 (Ki)^{-\beta^\prime-\beta}((Ki)^{\beta-\delta}(Ki)^{1-\beta^\prime}+(Ki)^{\beta-\beta^\prime})\le C^\prime (Ki)^{1-2\beta^\prime-\delta}< 1,
    \\
    &C(Ki)^{\beta-\beta^\prime}(1+2(Ki)^{-2\beta}(8C^2(Ki)^{2\beta-2\delta}(Ki)^{2-2\beta^\prime}+8C^2(Ki)^{2\beta-2\beta^\prime}))\\
    &\le C(Ki)^{\beta-\beta^\prime}+C^\prime((Ki)^{\beta-3\beta^\prime-2\delta+2}+(Ki)^{\beta-3\beta^\prime})<\frac12,\\
    &C(Ki)^{-\beta^\prime}+C(Ki)^{-\beta^\prime-2\beta}
    (8C^2(Ki)^{2\beta-2\delta}(Ki)^{2-2\beta^\prime}+8C^2(Ki)^{2\beta-2\beta^\prime})< 2C(Ki)^{-\beta^\prime},
    \end{split}
\end{align}
where $C^\prime$ is an appropriate constant independent of $i$. Note that from \eqref{ex7} we also have that  $\rho_0< K^{1-\beta^\prime}$. Define $|u_0|_{V_\delta}=:\rho_0,\,\Delta T_1=K^{-1}\le 1,\,T_1=T_0+\Delta T_1$ where $T_0=0$. Then,  by Lemma \ref{ex1} (i)  and Lemma \ref{ex2}, we have that
\begin{equation*}
    |||{\mathcal T}(U)|||\le C(\Delta T_1^{\delta-\beta}\rho_0+\Delta T_1^{\beta^\prime-\beta}+\Delta T_1^{\beta^\prime+\beta}|||U|||^2).
\end{equation*}
Hence, to find a ball $B_{W(T_0,T_1)}(0,R_1)$ that will be mapped into itself we calculate the minor root $R_1$ of

\begin{equation}\label{2eq}
    x=C(\Delta T_1^{\delta-\beta}\rho_0+\Delta T_1^{\beta^\prime-\beta}+\Delta T_1^{\beta^\prime+\beta}x^2)
\end{equation}
which is given by
\begin{equation*}
   R_1:= \frac{2C(\Delta T_1^{\delta-\beta}\rho_0+\Delta T_1^{\beta^\prime-\beta})}{1+\sqrt{1-4C^2 \Delta T_1^{\beta^\prime+\beta}(\Delta T_1^{\delta-\beta}\rho_0+\Delta T_1^{\beta^\prime-\beta})}}< 2C(\Delta T_1^{\delta-\beta}\rho_0+\Delta T_1^{\beta^\prime-\beta}),
\end{equation*}
see Sohr \cite{Sohr} Page 349.
This root is well-defined which follows from \eqref{ex7} and the first inequality of \eqref{ex8} for $i=1$, since these conditions in particular imply that

\begin{equation}\label{2eq1}
    1-4C^2 \Delta T_1^{\beta^\prime+\beta}(\Delta T_1^{\delta-\beta}\rho_0+\Delta T_1^{\beta^\prime-\beta})> 0.
\end{equation}
Moreover, we obtain from Lemma \ref{ex3} with $u_0^1=u_0^2$ that $\mathcal{T}$ is a contraction on the ball $B_{W(T_0,T_1)}(0,R_1)$ if

\begin{equation*}
    C\Delta T_1^{\beta^\prime-\beta}(1+2\Delta T_1^{2\beta}(8C^2\Delta T_1^{2\delta -2\beta}\rho_0^2+8C^2\Delta T_1^{2\beta^\prime -2\beta}))<\frac12
\end{equation*}
which follows from \eqref{ex7} and the second inequality of \eqref{ex8} for $i=1$. Then the system \eqref{equ1}-\eqref{equ2} has a  solution $U^1$ in $B_{W(T_0,T_1)}(0,R_1)$
which is unique by Theorem \ref{tex1}.

Furthermore, by Lemma \ref{ex1} (ii)  it is known that
\begin{align*}
    |u(T_1)|_{V_\delta}&\le \rho_0+C(\Delta T_1^{\beta^\prime}(1+\Delta T_1^{2\beta}|||U|||^2))\\
    &\leq \rho_0+C\Delta T_1^{\beta^\prime}+C\Delta T_1^{\beta^\prime+2\beta}R_1^2\\
    &\leq \rho_0+C\Delta T_1^{\beta^\prime}+C\Delta T_1^{\beta^\prime+2\beta}(8C^2\Delta T_1^{2\delta-2\beta} \rho_0^2+8C^2\Delta T_1^{2\beta^\prime-2\beta})\\
    &< \rho_0+2C \Delta T_1^{\beta^\prime}\le \rho_0+ 2 C K^{-\beta^\prime}.
    \end{align*}
Hence, by using again \eqref{ex7}, the right hand side of the previous inequality is bounded by $K^{1-\beta^\prime}$.

Suppose now that we have concatenated a solution on $[0,T_{i-1}]$ and that $|u(T_{i-1})|_{V_\delta}< \rho_0+\sum_{j=1}^{i-1} 2C (Kj)^{-\beta^\prime}$ for $i=2, 3, \cdots$, and $T_{i-1}<1$. For the fact that this concatenation is a solution we refer to Theorem \ref{rem10} below. Set $T_i=T_{i-1}+\Delta T_{i}$, $\Delta T_{i}=(Ki)^{-1}$ if $T_i< 1$, and $T_i=1$ in other case.
By \eqref{ex7} we know that $|u(T_{i-1})|_{V_\delta}<{ (K(i-1))}^{1-\beta^\prime}$.
Because of \eqref{ex8}, the Banach fixed point theorem gives us a  solution to the system \eqref{equ1}-\eqref{equ2} $U^i\in B_{ W(T_{i-1},T_i)}(0,R_{i})$ which is unique, where $R_{i}$ is the minor root of \eqref{2eq} when replacing $\rho_0$ by $(K(i-1))^{1-\beta^\prime}<(Ki)^{1-\beta^\prime}$ and $\Delta T_1$ by $\Delta T_{i}$.  Again, by concatenation
we obtain a solution on $[0,T_i]$. In addition, we obtain
\begin{equation*}
|u(T_{i})|_{V_\delta} \le \rho_0 +\sum_{j=1}^{i-1} 2C (Kj)^{-\beta^\prime}+ 2C (Ki)^{-\beta^\prime}=\rho_0 +\sum_{j=1}^{i} 2C (Kj)^{-\beta^\prime}< (Ki)^{1-\beta^\prime}.
\end{equation*}
Finally,
since $\sum_i i^{-1}=\infty$ there is an $i^\ast\in\NN$ such that $T_{i^\ast}\wedge 1= 1$, which means that there exists a global solution of \eqref{equ1}-\eqref{equ2} in $W(0,1)$ for any $u_0\in V_\delta$.
\end{proof}

Now we prove the assertion form the last theorem allowing us to concatenate solutions to another solution.

\begin{theorem}\label{rem10}
Suppose Hypothesis ${\bf H}$ holds, and assume $\delta+\beta^\prime>1$,
and $u_0\in V_\delta$. Let $U^i$ be the elements from the proof of Theorem \ref{tex2}. Then these elements can be concatenated to a solution $U$ in $W(0,1)$. In particular,
this solution satisfies the Chen equality.
\end{theorem}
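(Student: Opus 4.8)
The plan is to reduce the statement to the concatenation of two consecutive pieces and then verify the three ingredients of ``being a solution in $W(0,1)$'': membership in the phase space, the Chen equality \eqref{chenbis}, and the fixed-point identities \eqref{equ1}--\eqref{equ2} over the whole interval. Since the elements $U^i$ are produced inductively on the finitely many subintervals $[T_{i-1},T_i]$ of the proof of Theorem~\ref{tex2}, it suffices to prove the following: if $U^1=(u^1,v^1)\in W(0,a)$ and $U^2=(u^2,v^2)\in W(a,b)$ are solutions with $u^1(a)=u^2(a)$, then the concatenation $U=(u,v)$ defined just before the statement of Theorem~\ref{tex2} is a solution in $W(0,b)$; the assertion for $W(0,1)$ then follows by iterating over $i$.

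First I would dispose of membership $U\in W(0,b)$. For the path component, a difference $|u(t)-u(s)|$ with $s<a<t$ is split at $a$ and bounded by $(\|u^1\|_\beta+\|u^2\|_\beta)\big((a-s)^\beta+(t-a)^\beta\big)\le 2(\|u^1\|_\beta+\|u^2\|_\beta)(t-s)^\beta$; for the area component the only region needing attention is $s\le a<t$, where $\|(u^1(a)-u^1(s))\otimes_V(\omega(t)-\omega(a))\|\le\|u^1\|_\beta\|\omega\|_{\beta^\prime}(a-s)^\beta(t-a)^{\beta^\prime}\le\|u^1\|_\beta\|\omega\|_{\beta^\prime}(t-s)^{\beta+\beta^\prime}$ and $\|v^1(s,a)\|,\|v^2(a,t)\|\le(\|v^1\|_{\beta+\beta^\prime}+\|v^2\|_{\beta+\beta^\prime})(t-s)^{\beta+\beta^\prime}$. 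Next I would check the Chen equality \eqref{chenbis} for $v$ by distinguishing the position of $a$ relative to $s\le r\le t$: for $t\le a$ (resp.\ $s\ge a$) it is the Chen equality of $v^1$ (resp.\ $v^2$); in the two remaining cases $s<a\le r\le t$ and $s\le r<a\le t$ one substitutes the ``cross'' value of $v$, applies the Chen equality of $v^2$ (resp.\ $v^1$) on the appropriate triangle, and uses $u^2(a)=u^1(a)$ to recombine the rank-one tensors, which produces exactly $v(s,t)$.

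The core of the argument is that $U$ satisfies \eqref{equ1}--\eqref{equ2} on all of $[0,b]$. For $t\le a$ this is inherited verbatim from $U^1$, since every integral in \eqref{equ1}--\eqref{equ2} with endpoints in $[0,a]$ sees only $u^1$ and $v^1$. For $t\in(a,b]$ everything hinges on the splitting identity
\[
\int_0^t S(t-r)G(u(r))\,d\omega(r)=S(t-a)\int_0^a S(a-r)G(u(r))\,d\omega(r)+\int_a^t S(t-r)G(u(r))\,d\omega(r),
\]
with the paper's convention that each such symbol denotes the sum of the two fractional integrals appearing in \eqref{equ1}, together with its counterpart for the area equation \eqref{equ2}. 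Granting these, the right-hand side of \eqref{equ1} evaluated at $u$ equals $S(t)u_0+S(t-a)\int_0^a S(a-r)G(u^1(r))d\omega(r)+\int_a^t S(t-r)G(u^2(r))d\omega(r)=S(t-a)u^1(a)+\int_a^t S(t-r)G(u^2(r))d\omega(r)=u^2(t)=u(t)$, because $U^2$ solves \eqref{equ1} on $[a,b]$ with datum $u^2(a)=u^1(a)$; the area equation is treated the same way, now additionally using the Chen equality \eqref{chen3fold} of $(u\otimes(\omega\otimes_S\omega))$ from Lemma~\ref{l7} to reconcile the tensor $(u\otimes(\omega\otimes_S\omega)(t))(s,t)$ built over $[0,t]$ with its pieces over $[0,a]$ and $[a,t]$. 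The most economical way to obtain these splitting identities — and in fact to recover membership and the Chen equality simultaneously — is smooth approximation: with $(\omega^n)$ the piecewise-smooth paths of Hypothesis~{\bf H}(5), for each $n$ the formula \eqref{eq0} is the classical variation-of-constants formula and the area $(u\otimes\omega)$ is an ordinary integral (see \eqref{eq11}, \eqref{ns}), so the concatenation $U_n$ of the classical solutions $U_n^1,U_n^2$ on $[0,a]$, $[a,b]$ is simply the classical solution on $[0,b]$ and hence satisfies \eqref{equ1}--\eqref{equ2} (written for $\omega^n$) and the Chen equality trivially. By Lemma~\ref{l81} of the Appendix and Hypothesis~{\bf H}(5) one has $U_n^i\to U^i$, the concatenation depends continuously in the norms above on $U_n^1$, $U_n^2$ and $\omega^n$, so $U_n\to U$ in $W(0,b)$; and the right-hand sides of \eqref{equ1}--\eqref{equ2} evaluated at $(U_n,\omega^n,(\omega^n\otimes_S\omega^n))$ converge to those at $(U,\omega,(\omega\otimes_S\omega))$ by combining Lemma~\ref{ex3bis} (varying the noise) with the Lipschitz estimate of Lemma~\ref{ex3} (varying $U$). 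Taking limits yields \eqref{equ1}--\eqref{equ2} for $U$, and \eqref{chenbis}, being a closed condition in $W(0,b)$, passes to the limit as well; iterating over $[T_{i-1},T_i]$ gives the solution in $W(0,1)$.

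The main obstacle is precisely the splitting identity for the rough integral in the presence of the semigroup and of the area correction $D_{t-}^{1-\alpha}\dD_{t-}^{1-\alpha}v[r]$. A direct proof would have to track the change of the right endpoint $t$ inside $D_{t-}^{1-\alpha}$, $\dD_{t-}^{1-\alpha}$ and $\hat D_{\cdot+}^\alpha$ and, most delicately, reconcile the definition \eqref{eq131bis} of $(u\otimes(\omega\otimes_S\omega)(t))$ over $[0,t]$ with the analogous objects over $[0,a]$ and $[a,t]$, for which \eqref{chen3fold} is indispensable; this is doable but technical. The approximation route avoids the computation, at the price of checking that the convergence constants in Lemmata~\ref{ex3bis} and \ref{ex3} are uniform over the sequences $(U_n^i)$ — which they are, since those sequences are norm-bounded, being convergent.
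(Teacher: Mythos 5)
Your proposal is correct and follows essentially the same route as the paper: approximate by the piecewise-smooth paths of Hypothesis {\bf H}(5), observe that the classical mild solutions on the subintervals concatenate trivially (with Lemma \ref{l8} supplying the Chen equality), and pass to the limit interval by interval using Lemmata \ref{ex3} and \ref{ex3bis} together with the convergence of the data $u^n(T_i)\to u(T_i)$ in $V_\delta$. Your additional direct verifications of membership in $W(0,b)$ and of the Chen equality for the concatenated area are consistent with, though not spelled out in, the paper's argument.
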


\begin{proof}

Recall that $\tT=\tT(U,\omega,(\omega\otimes_S\omega),u_0)$ denotes the right-hand side of \eqref{equ1}-\eqref{equ2}. In addition, suppose that $(u_0^n)$ converges to $u_0$ in $V_\delta$. Let $B_{W(T_0,T_1)}(0,R_1)$ be the ball from the proof of Theorem \ref{tex2} such that
$\tT(U,\omega,(\omega\otimes_S\omega),u_0)$ is a self--mapping and a contraction on this ball with a contraction constant less than 1/2. We can choose an $R_1^\prime>R_1$ that $\tT(U,\omega,(\omega\otimes_S\omega),u_0)$ is still a contraction with constant less than 1/2 with respect to $B_{W(T_0,T_1)}(0,R_1^\prime)$. Since the  constant $C$ in Lemmata \ref{ex1}, \ref{ex2} and \ref{ex3} can be chosen continuously depending on $u_0,\,\omega$ and $(\omega\otimes_S\omega)$, for sufficiently large $n$ the mappings
$\tT(\cdot,\omega^n,(\omega^n\otimes_S\omega^n),u_0^n)$ map $B_{W(T_0,T_1)}(0,R_1^n),\,R_1^n\le R_1^\prime$ into itself and have a contraction constant less than 1/2 on these balls. Let $U^1=(u^1,v^1)$ and $U^1_n=(u^1_n,v^1_n)$ be the fixed points of
$\tT(\cdot,\omega,(\omega\otimes_S\omega),u_0)$, $\tT(\cdot,\omega^n,(\omega^n\otimes_S\omega^n),u_0^n)$. Then
\begin{align*}
    |||U^1-U^1_n|||&\le|||\tT(U^1,\omega,(\omega\otimes_S\omega),u_0)-\tT(U^1,\omega^n,(\omega^n\otimes_S\omega^n),u_0)|||\\
  &+  |||\tT(U^1,\omega^n,(\omega^n\otimes_S\omega^n),u_0)-\tT(U^1,\omega^n,(\omega^n\otimes_S\omega^n),u_0^n)|||\\
    &+|||\tT(U^1,\omega^n,(\omega^n\otimes_S\omega^n),u_0^n)-\tT(U^1_n,\omega^n,(\omega^n\otimes_S\omega^n),u_0^n)|||\\
    &\le |||\tT(U^1,\omega,(\omega\otimes_S\omega),u_0)-\tT(U^1,\omega^n,(\omega^n\otimes_S\omega^n),u_0)|||\\
    &+T_1^{\delta-\beta}|u_0-u_0^n|_{V_\delta}
    +\frac12 |||U^1-U^1_n|||.
\end{align*}
It suffices to take into account that, by Lemma \ref{ex3bis}, the first term on the right-hand side converges to zero; furthermore, the second one trivially goes to zero due to the convergence of $(u_0^n)$ to $u_0$ in $V_\delta$, hence $U^1_n$ converges to $U^1$.

Applying the  second part of Lemma \ref{ex3},
\begin{align}\label{eq30}
\begin{split}
 |u^1_n(T_1)-u^1(T_1)|_{V_\delta}&\le |\tT_1(U^1,\omega,(\omega\otimes_S\omega),u_0)-\tT_1(U^1_n,\omega,(\omega \otimes_S\omega),u_0^n)|_{V_\delta}\\
 &+|\tT_1(U^1_n,\omega,(\omega\otimes_S\omega),u_0^n)-\tT_1(U^1_n,\omega^n,(\omega^n\otimes_S\omega^n),u_0^n)|_{V_\delta}\\
 &\leq c| u_0^n-u_0|_{V_\delta} + CT_1^{\beta^\prime} (1+ T_1^{2\beta^\prime} (|||U^1_n|||^2+|||U^1|||^2)) (|||U^1_n-U^1|||+| u_0^n-u_0|_{V_\delta})\\
 &+|\tT_1(U^1_n,\omega,(\omega\otimes_S\omega),u_0^n)-\tT_1(U^1_n,\omega^n,(\omega^n\otimes_S\omega^n),u_0^n)|_{V_\delta}.
\end{split}
\end{align}
Since $U^1_n$ converges to $U^1$, on account of item (4) in {\bf H}, this inequality implies that $u^1_n(T_1)$ converges to $u^1(T_1)$ in $V_\delta$. Indeed, for the convergence of the third expression on the right-hand side we note that $\{|||U^1_n|||:n\in\NN\}$ is bounded and
that the first integral of \eqref{equ1} converges when we replace $\omega$ by  $\omega-\omega^n$ by Hypothesis {\bf H}, while the second integral is zero because it does not depend on $\omega$ or $\omega\otimes_S\omega$. In addition, since by Lemma \ref{l8} $U^1_n$ satisfies the Chen equality so does $U^1$.

Therefore we can repeat the same calculations on $[T_1,T_2]$ and similarly on any of the {\em finitely} many intervals $[T_i,T_{i+1}]$. Since $u^i_n$ are related to {\em classical} mild solutions to \eqref{eqn1} we can concatenate these elements to a $\beta$-H{\"o}lder continuous solution on $[0,1]$ and the associated elements $U^i_n$ can be concatenated to an element $U_n$ in $W(0,1)$, see Remark \ref{r10}. These concatenations converge to $U\in W(0,1)$ given by the concatenation of $U^i$. Since $U^n$ satisfies \eqref{equ1}-\eqref{equ2} so does its limit $U$ which gives then the solution of\eqref{equ1}-\eqref{equ2}  for non-regular $\omega$.

To sum up, in order to obtain the convergence of $(U_n)_{n\in\NN}$ in $W(0,1)$ we have used the Chen equality, the convergence of $U^i_n$ to $U^i$ on $[T_i,T_{i+1}]$ and the convergence of $\omega^n$ to $\omega$. In addition, we need the first part of Lemma \ref{ex3} and Lemma \ref{ex3bis}.
\end{proof}

\section{Fractional Brownian motion with Hurst index in the interval $(1/3,1/2]$. Examples}\label{fBm}

In this section we consider a fractional Brownian motion with Hurst parameter $H\in (1/3,1/2]$. We aim at checking that such a process satisfies item (4) in Hypothesis {\bf H}.  At the end of this section we also present two examples of possible non-linear operator $G$ satisfying all assumptions described in Hypothesis {\bf  H}.

Given a probability space and $H\in (0,1)$, a continuous centered Gau{\ss}ian process
$\beta^H(t)$, $t\in\mathbb{R}$, with the covariance function
\begin{equation*}
    \mathbb{E}\beta^H(t)\beta^H(s)=\frac{1}{2}(|t|^{2H}+|s|^{2H}-|t-s|^{2H}),\qquad t,\,s\in\mathbb{R}
\end{equation*}
is called a {\it two--sided one-dimensional fractional Brownian
motion (fBm)}, and $H$ is the {\it Hurst parameter}.

Let $Q$ be a positive symmetric operator of trace class on $V$, i.e.,  ${\rm tr}_V\,Q<\infty$, with positive discrete spectrum  $(q_i)_{i\in\NN}$ and eigenelements $(f_i)_{i\in \NN}$. Then a continuous {\it $V$-valued fractional Brownian motion $B^H$} with  covariance operator $Q$ and Hurst parameter $H$ is defined by
\begin{equation*}
   B^H(t)=\sum_{i=1}^{\infty} \sqrt{q_i}f_i \beta_i^H(t),\quad t\in\mathbb{R},
\end{equation*}
where $(\beta_i^H(t))_{i\in{\mathbb N}}$ is a sequence of stochastically independent one-dimensional fBm.

Kolmogorov's theorem ensures that $B^H$ has a continuous version. Thus we can consider the canonical interpretation of an fBm given by  $(C_0(\RR;V),\bB(C_0(\RR;V)),\PP)$, where $C_0(\mathbb{R},V)$ denotes the space of continuous functions on $\mathbb{R}$ with values in $V$ such that are zero at zero, equipped with the compact open topology, $\bB(C_0(\RR,V))$ is the associated Borel-$\sigma$-algebra and ${\mathbb P}$ is the distribution of the fBm $B^H$.

This (canonical) process has for any $\beta^{\prime}<H$ a version, denoted by $\omega$, which is ${\beta^{\prime}}$-H{\"o}lder continuous, see Kunita \cite{Kunita90}, Theorem 1.4.1. Since we want to pick the fractional Brownian motion as the driving path in the abstract theory developed in the previous sections, we restrict ourselves to the cases in which $H\in (1/3,1/2]$ and therefore $\beta^\prime \in (1/3,1/2)$.

Note that $\omega_i(t)=q_i^{-\frac12}(\omega(t),f_i)_V$ is an  iid-sequence of fractional Brownian motions in $\RR$. For the sake of brevity, in what follows we assume that the $f_i$ are the same as the $e_i$.

Let us also denote by $\omega^n$ and $\omega_i^n$ the piecewise linearizations of $\omega$ and $\omega_i$, respectively, with respect to the equidistant partition  $\{t_i^n\}_{i=0,\cdots,n}$ of $[0,1]$.  In this context, for $E\in L_2(V,V_\delta)$ we consider
\begin{align}\label{neu53}
\begin{split}
    E(\omega\otimes_S\omega)(s,t)=\int_s^t\int_s^\xi  S(\xi-r)Ed\omega(r)\otimes_V d\omega(\xi).
    \end{split}
\end{align}
Note that the inner integral is defined according to Lemma \ref{lex7} (ii). In what follows we shall prove that also the outer integral exists. To reach this purpose, notice that by partial integration we get
\begin{align*}
    E(\omega^n\otimes_S\omega^n)(s,t)&=\int_s^t\int_s^\xi S(\xi-r)Ed\omega^n(r)\otimes_V d\omega^n(\xi)\\
    &=\int_s^tE(\omega^n(\xi)-\omega^n (s))\otimes_V  d\omega^n(\xi)+\int_s^tA\int_s^\xi S(\xi-r)E(\omega^n(r)-\omega^n(s))dr\otimes_V d\omega^n(\xi).
\end{align*}

In the next results we will prove that the right-hand side of the above expression forms a Cauchy sequence having a limit, which will be given by \eqref{neu53}.

First of all, we focus on the first term on the right-hand side of the previous expression. To simplify a bit the exposition, in the following we denote the mapping
\begin{equation*}
    L_2(V, V_\delta) \ni E\to\int_s^tE(\omega^n(\xi)-\omega^n (s))\otimes_V  d\omega^n(\xi)\quad \text{by }\; E (\omega^n \otimes \omega^n ) (s,t).
\end{equation*}

Note that if we consider $\omega^n_i$, since this one-dimensional random variable is smooth, the couple $(\omega^n_i, \omega^n _ i \otimes \omega^n_i )$ is well-defined and converges to the couple $(\omega_i , \omega_i \otimes \omega_i) $ in $C_{\beta^\prime} ([0,1]; \RR) \times C_{2\beta^\prime} (\Delta_{0,1}; \RR^2)$, see Unterberger \cite{U} and Deya {\it et al.} \cite{DeNeTi10}.
\begin{theorem}\label{tex3}
The sequence $((\omega^n\otimes\omega^n))_{n\in\NN}$,
which elements can be represented by
\begin{equation*}
q_k^\frac12 q_j^\frac12\int_s^t(\omega_k^n(\xi)-\omega_k^n(s)) d\omega_j^n(\xi)
\end{equation*}
converges on a set of full measure  in $C_{2\beta^\prime} (\Delta_{0,1};L_2(L_2(V,V_\delta),V\otimes V))$ to an element denoted by $\omega\otimes\omega$.
\end{theorem}

\begin{proof}
Consider the orthonormal basis $(E_{ki})_{k,i\in\NN}$ of $L_2(V,V_\delta)$ given  by (\ref{etiqueta}), and $(e_l\otimes_V e_j)_{l,j\in\NN}$, the orthonormal basis of $V\otimes V$.
First note that, for $(s,t)\in \Delta_{0,1}$,
\begin{align*}
\|(\omega^n \otimes \omega^n)(s,t)\|^2_{L_2(L_2(V,V_\delta),V\otimes V)}&= \sum_{i,k}\sum_{l,j}  (E_{ki}( \omega^n \otimes \omega^n )(s,t), e_l\otimes_V e_j )_{V\otimes V}^2 \\
& \leq
  \sum_k\lambda_k^{-2\delta}\sum_{i,j}q_iq_j \bigg(\int_s^t(\omega_i^n(\xi)-\omega_i^n(s))d\omega_j^n(\xi)\bigg)^2,
\end{align*}
and hence we have to study the behavior of the last sum. Let us denote
\begin{align*}
 A_{i,j}^n(s,t):&=   \int_s^t(\omega_i(\xi)-\omega_i(s))d\omega_j(\xi)-\int_s^t(\omega_i^n(\xi)-\omega_i^n(s))d\omega_j^n(\xi).
\end{align*}
By symmetry we can assume $i\leq j$. In fact we assume that $i<j$ since the case $i=j$ is easier, see a comment at the end of the proof. Note that even though the tensors are defined in the set $\{(s,t) \in [0,1]^2: 0 \leq s \leq t \leq 1 \}$, since the L\'evy area $A_{i,j}^n$ is symmetric they can be extended to the set  $[0,1]^2$.

To estimate $A_{i,j}^n(s,t)$ we apply the Lemma 3.7 in \cite{DeNeTi10}, which claims that for $p\geq 1$ there exists $K_{\beta^\prime, p}$ such that
\begin{align}\label{tail}
\|A_{i,j}^n\|_{2\beta^\prime} \leq K_{\beta^\prime, p} (R_{n,p}^{i,j}+ \|\omega_i-\omega_i^n\|_{\beta^\prime} \|\omega_j\|_{\beta^\prime}+\|\omega_j-\omega_j^n\|_{\beta^\prime} \|\omega_i^n\|_{\beta^\prime}),
\end{align}
where
\begin{align*}
R_{n,p}^{i,j}:=\bigg(\int_0^1\int_0^1\frac{|A_{i,j}^n(s,t)|^{2p}}{|t-s|^{4\beta^\prime p+2}}dsdt\bigg)^{1/(2p)}.
\end{align*}
In particular, from the
proof of Lemma 3.7 in \cite{DeNeTi10} we know that
\begin{equation*}
   \EE (R_{n,p}^{i,j})^{2p}\le c n^{-4p(H-\beta^{\prime\prime})}<\infty,
\end{equation*}
for $\beta^\prime<\beta^{\prime\prime}<H$, being $\beta^{\prime \prime}$ close enough to $H$. Now let us take $p$ large enough such that $4p(H-\beta^{\prime\prime})>1$, and thus
\begin{align*}
    \PP(\sum_{i,j}q_i q_j(R_{p,n}^{i,j})^2>o_n^2)\le ({\rm tr}_V Q)^{2(p-1)}\sum_{i,j}q_i q_j \EE(R_{n,p}^{i,j})^{2p}o_n^{-2p}
    \le C n^{-4p(H-\beta^{\prime\prime})}o_n^{-2p}.
\end{align*}
For an appropriate sequence $(o_n)_{n\in\NN}$ with limit zero, the right-hand side has a finite sum. Then by the Borel-Cantelli Lemma, $(\sum_{i,j}q_iq_j(R_{p,n}^{i,j})^2)_{n\in\NN}$ tends to zero almost surely. In a similar manner we obtain the convergence of the last terms in \eqref{tail}. It suffices to take into account that, for $\beta^\prime< \beta^{\prime\prime}<H$,
\begin{align}\label{eq20}
\begin{split}
    \|\omega_i-\omega_i^n\|_{\beta^\prime}\le  G_{\beta^{\prime\prime}}(i,\omega)n^{\beta^\prime-\beta^{\prime\prime}},\;
    \|\omega_i\|_{\beta^\prime}\le G_{\beta^{\prime\prime}}(i,\omega),\;
    \|\omega_i^n\|_{\beta^\prime}\le G_{\beta^{\prime\prime}}(i,\omega)
    \end{split}
    \end{align}
where $G_{\beta^{\prime\prime}}(i,\omega)\ge   \|\omega_i\|_{\beta^{\prime\prime}}$ and  $G_{\beta^{\prime\prime}}(i,\omega) \in L^{p}(\Omega)$  for   $p\in\NN$ are iid random variables, see Kunita \cite{Kunita90} Theorem  1.4.1. We then have
\begin{align*}
    \PP(\sum_{i,j}q_i q_j\|\omega_i^n-\omega_i\|_{\beta^\prime}^2\|\omega_j\|_{\beta^\prime}^2>o_n^2)&\le ({\rm tr}_VQ)^{2(p-1)}\sum_{i,j}q_i q_j (\EE G_{\beta^{\prime\prime}}(i,\omega)^{4p})^\frac12(\EE G_{\beta^{\prime\prime}}(j,\omega)^{4p})^\frac12n^{2p(\beta^\prime-\beta^{\prime\prime})}o_n^{-2p}\\
    &\le C n^{2p(\beta^{\prime}-\beta^{\prime\prime})}o_n^{-2p}.
\end{align*}
For $p$ chosen sufficiently large and an appropriate zero--sequence $(o_n)_{n\in\NN}$ we obtain the almost sure convergence of
$(\sum_{i,j}q_i q_j\|\omega_i^n-\omega_i\|_{\beta^\prime}^2 \|\omega_j\|_{\beta^\prime}^2)_{n\in \NN}$. Similarly we can treat the last term  of \eqref{tail}, that is,
$(\sum_{i,j}q_i q_j\|\omega_i^n-\omega_i\|_{\beta^\prime}^2 \|\omega_j^n\|_{\beta^\prime}^2)_{n\in \NN}$.
Finally, note that
\begin{align*}
 A_{i,i}^n(s,t)&=   \frac{1}{2}(\omega_i(t)-\omega_i(s))^2- \frac{1}{2}(\omega_i^n(t)-\omega_i^n(s))^2
 \end{align*}
 and thanks to \eqref{eq20} we can obtain $\| A_{i,i}^n\|_{2\beta^\prime}\leq G_{\beta^{\prime\prime}}(i,\omega)^2n^{2(\beta^\prime-\beta^{\prime\prime})}$, which completes the proof.
\end{proof}
Note that the limit operator $\omega\otimes\omega$ is represented by L\'evy-areas $\int_s^t(\omega_k(\xi)-\omega_k(s))d\omega_j(\xi)$.

\begin{coro}\label{coro1}
By \eqref{eq20} we obtain that $\sum_{j,k}q_jq_k\|\omega_j\|_{\beta^\prime}^2\|\omega_k\|_{\beta^\prime}^2<\infty$ on a set of full measure.
\end{coro}

\begin{theorem}\label{tex4}
Suppose that
\begin{equation}\label{neu54}
\sum_{i}\lambda_i^{2\gamma-2\delta}<\infty
\end{equation}
where $\gamma+\beta^\prime>1$. Then the mapping
\begin{align*}
  ((s,t),E) & \in \Delta_{0,1}\times L_2(V,V_\delta)\mapsto \int_s^t\bigg(A\int_s^\xi S(\xi-r)E(\omega(r)-\omega(s))dr\bigg)\otimes_Vd\omega(\xi)
\end{align*}
is in $C_{2\beta^\prime}(\Delta_{0,1};L_2(L_2(V,V_\delta),V\otimes V))$.
\end{theorem}
\begin{proof}
For $\omega\in C_{\beta^\prime}([0,1];V)$, thanks to Pazy \cite{Pazy} Theorem 4.3.5 (iii),
\begin{align}\label{neu51}
\begin{split}
  \bigg|A&\int_s^\xi S(\xi-r)E_{ij}(\omega(r)-\omega(s))dr\bigg|
=\frac{q_j^\frac12}{\lambda_i^{\delta}}\bigg|\int_s^\xi AS(\xi-r)e_i(\omega_j(r)-\omega_j(s))dr\bigg|\\
  &\le c\frac{q_j^\frac12}{\lambda_i^{\delta}}\|\omega_j\|_{\beta^\prime}(\xi-s)^{\beta^\prime},
\end{split}
\end{align}
and applying Bensoussan and Frehse \cite{BenFre00} Corollary 2.1 we also have
\begin{align}\label{neu52}
\begin{split}
  \bigg|A&\int_s^\xi S(\xi-r)E_{ij}(\omega(r)-\omega(s))dr-A\int_s^{\xi^\prime} S(\xi^\prime-r)E_{ij}(\omega(r)-\omega(s))dr\bigg|\\
  &=\frac{q_j^\frac12}{\lambda_i^{\delta}}\bigg|\int_s^\xi AS(\xi-r)e_i(\omega_j(r)-\omega_j(s))dr-\int_s^{\xi^\prime} AS(\xi^\prime-r)e_i(\omega_j(r)-\omega_j(s))dr\bigg|\\
  &\le \frac{q_j^\frac12}{\lambda_i^{\delta-\gamma}}\bigg|\int_s^\xi (-A)^{1-\gamma}S(\xi-r)e_i(\omega_j(r)-\omega_j(s))dr-\int_s^{\xi^\prime} (-A)^{1-\gamma}S(\xi^\prime-r)e_i(\omega_j(r)-\omega_j(s))dr\bigg|\\&\le c\frac{q_j^\frac12}{\lambda_i^{\delta-\gamma}}\|\omega_j\|_{\beta^\prime}|\xi-\xi^\prime|^\gamma.
\end{split}
\end{align}
Therefore, for an $\alpha < \gamma$ such that $\beta^\prime >1-\alpha $, which exists thanks to the assumption $\gamma+\beta^\prime >1$, we can define the integral
\begin{align*}
\int_s^t&\bigg(A\int_s^\xi S(\xi-r)e_i(\omega_j(r)-\omega_j(s))dr\bigg) d\omega_k(\xi)=\int_s^tD_{s+}^\alpha\bigg( A \int_s^\cdot S(\cdot-r)e_i(\omega_j(r)-\omega_j(s))dr\bigg)[\xi]D_{t-}^{1-\alpha}\omega_{k,t-}[\xi]d\xi,
\end{align*}
since, as a consequence of \eqref{neu51} and \eqref{neu52}, and because $\gamma >\beta^\prime$,
\begin{align*}
&\bigg|D_{s+}^\alpha\bigg(A \int_s^\cdot S(\cdot-r)e_i(\omega_j(r)-\omega_j(s))dr\bigg)[ \xi]\bigg|\le c\|\omega_j\|_{\beta^\prime}(
  (\xi-s)^{\beta^\prime-\alpha}+(\xi-s)^{\gamma-\alpha}) \le c\|\omega_j\|_{\beta^\prime}
  (\xi-s)^{\beta^\prime-\alpha}.
  \end{align*}

We remind that $|D_{t-}^{1-\alpha}\omega_{k,t-}[\xi]|\le c\|\omega_k\|_{\beta^\prime}(t-\xi)^{\alpha+\beta^\prime-1}$, hence, by Corollary \ref{coro1},
\begin{align*}
  \bigg( & \sum_{i,j,k} \bigg( \int_s^t\bigg(\frac{q_j^\frac12}{\lambda_i^{\delta}}\int_s^\xi AS(\xi-r)e_i(\omega_j(r)-\omega_j(s))dr\bigg)\otimes_V q_k^\frac12d\omega_k(\xi )\bigg)^2\bigg)^\frac12
  \\
  &\le \bigg(\sum_i\lambda^{2\gamma-2\delta}_i\bigg)^\frac12 \bigg(\sum_{j,k}q_jq_k\|\omega_j\|_{\beta^\prime}^2\|\omega_k\|_{\beta^\prime}^2\bigg)^\frac12(t-s)^{2\beta^\prime}<\infty.
\end{align*}
\end{proof}
\begin{remark}
Replacing in the above proof $\omega$ by $\omega^n-\omega$ we obtain the following convergence
\begin{equation*}
\lim_{n\to\infty}\int_s^tA\int_s^\xi S(\xi-r)E(\omega^n(r)-\omega^n(s))dr\otimes_V d\omega^n(\xi)=\int_s^tA\int_s^\xi S(\xi-r)E(\omega(r)-\omega(s))dr\otimes_V d\omega(\xi)
\end{equation*}
in $C_{2\beta^\prime}(\Delta_{0,1};L_2(L_2(V,V_\delta),V\otimes V))$.
Indeed, in the proof of Theorem \ref{tex3} we have shown that
\begin{equation*}
  \lim_{n\to\infty}\sum_{i,j}q_i q_i(\|\omega_i-\omega_i^n\|_{\beta^\prime}^2\|\omega_j\|_{\beta^\prime}^2+\|\omega_j-\omega_j^n\|_{\beta^\prime}^2\|\omega_i^n\|_{\beta^\prime}^2)=0\quad \text{a.s.}\\
\end{equation*}
\end{remark}

Finally, Theorem \ref{tex3} and Theorem \ref{tex4} ensure that $E(\omega^n\otimes_S\omega^n)$ is a Cauchy sequence having a limit, and this limit is given by \eqref{neu53}. \\

\subsection{Examples}

 Let us consider two examples of operators $G$ satisfying the  assumptions of Theorem \ref{tex2}:
\smallskip

We start by considering some  lattice system. Let $V=l_2$  be the space
of square additive sequences with values in $\RR$. In addition, let
$A$ be a negative symmetric operator defined on $D(-A)\subset l_2$
with compact inverse. In particular, we can assume that $-A$ has a
discrete spectrum $0<\lambda_1\le \lambda_2\le\cdots\le
\lambda_i\le \cdots\to\infty$  where the associated
eigenelements $(e_i)_{i\in\NN}$ form a complete orthonormal system
in $l_2$. The
spaces $D((-A)^\nu)$ are then defined by
\begin{equation*}
    \{u=(u_i)_{i\in\NN}\in l_2: \sum_i \lambda_i^{2\nu}
    u_i^2=:|u|_{V_\nu}^2<\infty\}.
\end{equation*}
For $1/3<\beta^\prime<1/2$ we take $\delta$ such that according to Theorem \ref{rem10} $\delta+\beta^\prime>1$. We then consider the space $V_\delta$ for which we assume that for some $\gamma>1/2$ the condition
$\sum_i\lambda_i^{2\gamma-2\delta}<\infty$ holds, and hence \eqref{neu54} is satisfied.

Consider a sequence of functions
$(g_{ij})_{i,j\in \NN}$, with $g_{ij}:V\to \RR$, and define $G(u)$ for $u\in V$ by

\begin{equation*}
    G(u)v=\bigg(\sum_j
    g_{ij}(u)v_j\bigg)_{i\in\NN}\quad\text{for all }v\in V.
\end{equation*}
We assume that
\begin{align*}
\|G(u)\|_{L_2(V,V_\delta)}^2&=\sum_j |G(u) e_j|_{V_\delta}^2=\sum_j \bigg(\sum_i \lambda_i^{2\delta} (G(u) e_j)_i^2\bigg)=\sum_{i,j} \lambda_i^{2\delta} g_{ij}^2(u)\le c
\end{align*}
uniformly with respect to $u\in V$.

In addition, assume that $g_{ij}$ are four times  differentiable and their  derivatives  are uniformly bounded in the following way
\begin{align*}
&|Dg_{ij}(u)(e_k)|\le c_{g,1}^{ijk},\quad  |D^2g_{ij}(u)(e_k,h_1)|\le c_{g,2}^{ijk}|h_1|,\quad  |D^3g_{ij}(u)(e_k,h_1,h_2)|\le c_{g,3}^{ijk}|h_1||h_2|,\\
&|D^4g_{ij}(u)(e_k,h_1,h_2,h_3)|\le  c_{g,4}^{ijk}|h_1||h_2||h_3|\quad\text{for any }u\in V,
\end{align*}
and these bounds  satisfy
\begin{align*}
&\sum_{ijk}\lambda_i^{2\delta}(c_{g,1}^{ijk})^2<\infty,\quad  \sum_{ijk}\lambda_i^{2\delta}(c_{g,2}^{ijk})^2<\infty,\quad  \sum_{ijk}\lambda_i^{2\delta}(c_{g,3}^{ijk})^2<\infty,\sum_{ijk}\lambda_i^{2\delta}(c_{g,4}^{ijk})^2<\infty.
\end{align*}
To see for instance that $DG$ exists, note that by Taylor expansion
\begin{align*}
|g_{ij}(u+h)-g_{ij}(u)-Dg_{ij}(u)(h)|^2\le \frac12|  D^2g_{ij}(u+\eta h)(h,h)|^2
    \le (c_{g,2}^{ijk})^2|h|^2
\end{align*}
where $u,\, h\in V$ and $\eta\in [0,1]$.
In particular, we also note that
\begin{equation*}
    \sum_{j,k}\bigg|DG(u)(e_k,e_j)\bigg|_{V_\delta}^2\le  \sum_{ijk}\lambda_i^{2\delta}(c_{g,1}^{ijk})^2=:c_{DG}^2<\infty.
\end{equation*}
This condition ensures the Lipschitz
continuity of $G$ as well as the Hilbert-Schmidt property of   $DG$. Similarly, we obtain that $DG$ is also Lipschitz with respect  to the Hilbert-Schmidt norm. We also obtain the existence of  the second and third derivative. By the choice of  $\beta^\prime$ and $\delta$, the conditions on $G$ in Hypothesis {\bf H} hold.\\

Now, we consider the second example. Let us assume that $A$ is generated by the Laplacian on $\mathcal O =(0,1)$ with  homogenous Dirichlet boundary condition. $A$ with domain $D(-A)=H^2(\oO)\cap H_0^1(\oO)$ generates a semigroup in $L_2(\oO)$ .
Let $\rho=1/4+\eps$, $\eps>0$, small. Then $V:=D((-A)^\rho)$ consists of the Slobodetski spaces $H^{2\rho}(\mathcal O)$
satisfying the homogeneous boundary conditions, see Da Prato and Zabczyk \cite{DaPrato}, Page 401. In particular, the continuous embedding $V\subset C(\bar\oO)$ holds. In what follows we consider the restriction of the semigroup to $V$. Note that the inequalities \eqref{eq1} and \eqref{eq2} continue being true, and that $(\lambda_i^{-\rho}e_i)_{i\in \NN}$ is an    orthonormal basis of $V$ where  $(\lambda_i)_{i\in\NN}$
is the spectrum of $A$ and $(e_i)_{i\in \NN}$ are the associated eigenelements which are uniformly bounded in $L_\infty(\mathcal O)$.
The asymptotical behavior of the spectrum is given by $\lambda_i\sim i^2$.

\begin{lemma}
For $\mu\in (1,5/4)$
\begin{equation*}
  D((-A)^{\mu})= H^{2\mu}(\mathcal O)\cap H_0^1(\oO).
\end{equation*}
\end{lemma}
\begin{proof}
On $H^{2\mu}(\mathcal O)\cap H_0^1(\oO)$ we know that $A=\Delta_{HDBC}$  which is an isomorphism with range $H^{2\mu-2}(\mathcal O)$,
see Egorov and Shubin \cite{egorov} Page 124. In addition, $(-A)^{\mu-1}$ has the domain $H^{2\mu-2}(\mathcal O)$ if $\mu\in (1,5/4)$, , see Da Prato and Zabczyk \cite{DaPrato}, Page 401.
\end{proof}
Now for $1/3<\beta^\prime<1/2$ we take
\begin{equation*}
  \gamma>1-\beta^\prime,\qquad\delta=\gamma+\rho< 1.
\end{equation*}
That choice of $\gamma$ and $\delta$ ensures the condition (\ref{neu54}) of Theorem \ref{tex4}, since we have previously taken $\rho=1/4+\eps$.

Let $g$ be a four times differentiable  function on $\bar{\mathcal O}\times \RR$  which is zero on $\{0,1\}\times \RR$, such that all the corresponding derivatives ($g$ itself  included) are bounded. Define

\begin{equation*}
    G(u)(v)[x]=\int_{\mathcal O} g(x,u(y))v(y)dy\quad \text{for }u,\,v\in V.
\end{equation*}
Following Kantorowitsch and Akilow \cite{KA} Section XVII.3  it is not hard to prove that  $G$ is  three times continuously differentiable where the derivatives are given by
\begin{align*}
    DG(u)(v,h_1)[x]&=\int_{\mathcal O}D_2g(x,u)v(y)h_1(y)dy,\\
    D^2G(u)(v,h_1,h_2)[x]&=\int_{\mathcal O}D_2^2g(x,u)v(y)h_1(y)h_2(y)dy,\\
     D^3G(u)(v,h_1,h_2,h_3)[x]&=\int_{\mathcal O}D_2^3g(x,u)v(y)h_1(y)h_2(y)h_3(y)dy,
\end{align*}
for $v,\,h_1,\,h_2,\,h_3\in V$. We have that $G(u)(v),\, DG(u)(v,h_1),\, D^2G(u)(v,h_1,h_2),\,D^3G(u)(v,h_1,h_2,h_3) \in H^3(\oO)\cap H^1_0(\oO)\subset D((-A)^{\mu})\subset V_\delta$  where $\mu\in (1,5/4)$ (note that the image of $G$ should be contained in $D((-A)^{\rho+\delta})= D((-A)^{2\rho+\gamma})$, and with the choice we have done, $2\rho+\gamma \in (1,5/4)$. Let us check, for instance, that $D^3G(u)(v,h_1,h_2,h_3) \in D((-A)^\mu)\subset V_\delta$. By the
continuous embedding theorem we have that
\begin{align*}
     \int_{\mathcal O}&\bigg|\int_{\mathcal O}D^2_2D_1^kg(x,u(y)+h_3(y))v(y)h_1(y)h_2(y)-D^2_2D_1^kg(x,u(y))v(y)h_1(y)h_2(y)\\
&-D_2D_2^2 D_1^k g(x,u(y))v(y)h_1(y)h_2(y)h_3(y)dy\bigg|^2dx\le c\bigg(\int_{\mathcal O}|v(y)h_1(y)h_2(y)h_3(y)|dy\bigg)^2\\
&\le c^\prime|v|_{C}^2|h_1|_{C}^2|h_2|_{C}^2|h_3|_{C}^2\le c^{\prime\prime} |v|^2|h_1|^2|h_2|^2|h_3|^2\quad\text{for } k=1,2,3,
\end{align*}
where $c^\prime$ is a uniform bound for $|D_2^4D_1^kg(x,u)|^2|\oO|$.
This gives the differentiability of $D^2G(u)$ in $H^3(\oO)\cap H_0^1(\oO)$ and hence in $D((-A)^\mu)$ too.
Furthermore, $G(u)(v)[x],\cdots,D^3G(u)(v,h_1,h_2,h_3)[x]$ are zero for $x\in\{0,1\}$.

The Hilbert-Schmidt property of $DG(u)$ follows by
\begin{equation*}
     \sum_{i,j}\int_{\mathcal O}\bigg(\int_{\mathcal O}|D_2D_1^k g(x,u(y))\lambda_i^{-\rho}e_{i}(y)\lambda_j^{-\rho}e_{j}(y)dy\bigg)^2dx<c(\sum_i \lambda_i^{-2\rho})^2<\infty,
\end{equation*}
due to the  boundedness of $D_2 D_1^k g$. In the same manner we obtain that the other derivatives are Hilbert--Schmidt operators.

\medskip

These estimates allow us to apply Theorem  \ref{tex2}.

\section{Appendix: A priori estimates}

We start this section by considering a {\it modified
Beta--function}.

\begin{lemma}
\label{l1-m} The integral
\begin{equation*}
B_\nu^\eta(a,b):=\int_a^b(b-q)^\eta(q-a)^\nu
dq=c(b-a)^{\nu+\eta+1},\qquad c=c(\nu,\eta),
\end{equation*}
for $a<b$ and $\nu,\,\eta>-1$. In addition,
\begin{equation*}
\int_a^b B_\nu^\eta(a,r)(r-a)^\xi(b-r)^\mu
dr=c(b-a)^{\nu+\eta+\mu+\xi+2},\qquad c=c(\nu,\eta,\xi,\mu),
\end{equation*}
if $\nu+\eta+\xi>-2\,$ and $\, \nu,\eta,\mu>-1$.
\end{lemma}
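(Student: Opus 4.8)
The plan is to reduce both identities to the classical Euler Beta integral $B(p,q)=\int_0^1 t^{p-1}(1-t)^{q-1}\,dt$, which is finite precisely when $p,q>0$, by the affine change of variables that normalizes the interval $[a,b]$ to $[0,1]$. This is a routine computation; the only thing to keep track of is exactly which hypotheses on the exponents are forced by the integrability of the endpoint singularities.

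For the first identity I would substitute $q=a+(b-a)t$, $t\in[0,1]$, so that $dq=(b-a)\,dt$, $q-a=(b-a)t$ and $b-q=(b-a)(1-t)$. This turns $B_\nu^\eta(a,b)$ into
\[
(b-a)^{\nu+\eta+1}\int_0^1 t^\nu(1-t)^\eta\,dt=(b-a)^{\nu+\eta+1}\,B(\nu+1,\eta+1).
\]
Since $\nu,\eta>-1$ we have $\nu+1>0$ and $\eta+1>0$, so the Beta integral converges and the constant $c=c(\nu,\eta)=B(\nu+1,\eta+1)$ is finite; in particular it does not depend on $a,b$, which is what the applications of this lemma in the body of the paper use.

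For the second identity I would first insert the closed form just obtained, namely $B_\nu^\eta(a,r)=B(\nu+1,\eta+1)(r-a)^{\nu+\eta+1}$ for every $r\in(a,b)$, so that
\[
\int_a^b B_\nu^\eta(a,r)(r-a)^\xi(b-r)^\mu\,dr
=B(\nu+1,\eta+1)\int_a^b(r-a)^{\nu+\eta+\xi+1}(b-r)^\mu\,dr.
\]
The remaining integral is again of the form treated in the first part, now with $\nu+\eta+\xi+1$ playing the role of $\nu$ and $\mu$ playing the role of $\eta$. The hypotheses $\nu+\eta+\xi>-2$ and $\mu>-1$ are exactly what is needed to guarantee $\nu+\eta+\xi+1>-1$ and $\mu>-1$, so the first part applies and gives $B(\nu+\eta+\xi+2,\mu+1)(b-a)^{\nu+\eta+\xi+\mu+2}$. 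Multiplying the two factors yields the claim with $c(\nu,\eta,\xi,\mu)=B(\nu+1,\eta+1)\,B(\nu+\eta+\xi+2,\mu+1)$.

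There is essentially no real obstacle here: the only delicate point is the bookkeeping of the integrability thresholds at the two endpoints — the singularity of $(q-a)^\nu$ as $q\to a$ and of $(b-q)^\eta$ as $q\to b$ in the first integral, and similarly in the second — which is precisely why one assumes $\nu,\eta,\mu>-1$ and $\nu+\eta+\xi>-2$. Once these conditions are recorded, everything follows from standard properties of the Beta function, and I would also note explicitly that the resulting constants are scale-invariant (independent of the particular interval $[a,b]$), since this homogeneity is the feature repeatedly exploited in the a priori estimates elsewhere in the paper.
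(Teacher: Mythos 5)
Your proof is correct and is precisely the "standard" reduction to the Euler Beta integral that the paper itself invokes (it gives no details, stating only that the identities follow from the definition of the Beta function). The change of variables $q=a+(b-a)t$, the identification of the constants as $B(\nu+1,\eta+1)$ and $B(\nu+1,\eta+1)B(\nu+\eta+\xi+2,\mu+1)$, and the bookkeeping of the convergence conditions are all exactly right.
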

The proofs of these equalities are standard from the definition of
the Beta--function.

When $\nu=0$, we denote the corresponding modified
Beta--function just by $B^\eta(a,b)$.

We now give some properties of the fractional derivatives
$D_{s+}^{2\alpha-1}$ and $\hat D_{s+}^{\alpha}$. We shall omit their proofs since they follow straightforwardly.

In the following $\hat V$ represents an abstract Hilbert space that will be determined in the proof of Lemma \ref{ex1} below.

\begin{lemma}
\label{l2-m} Let $[0,T]\ni r\mapsto Q(r)\in
L(V)$ be a mapping satisfying
\begin{equation*}
\|Q(r)-Q(q)\|_{L(V)}\le c_Q(r-q)^{\beta_1}\qquad
\text{for}\quad r>q,\quad -2\alpha+\beta_1>-1.
\end{equation*}
We also suppose that, for $0\leq s\leq r\leq T$,
\begin{equation*}
\sup_{q\in[s,r]}\|Q(q)\|_{L(V)}\le c_Q^\prime.
\end{equation*}
In addition, let $V\ni x\mapsto R(x)\in L_2(\hat V,V_\delta)$ be a continuously
differentiable function bounded by $c_R$.
The first
derivative of $R$
is supposed to be bounded by $c_{DR}$. Then, for $0\le s<r\le T$ and $%
-2\alpha+\beta>-1$, for $u \in C_{\beta}([0,T];V)$ we have
\begin{equation*}
|D_{s+}^{2\alpha-1}(Q(\cdot)R(u(\cdot)))[r]|\le c\bigg(\frac{%
c_Q^\prime c_R}{(r-s)^{2\alpha-1}}+
c_Qc_RB^{-2\alpha+\beta_1}(s,r)+c_Q^\prime c_{DR}
B^{-2\alpha+\beta}(s,r)\|u\|_{\beta}\bigg).
\end{equation*}
\end{lemma}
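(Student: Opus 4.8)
The plan is to write out the Weyl fractional derivative of order $2\alpha-1$ (note that $2\alpha-1\in(0,1)$ under Hypothesis~${\bf H}$, so \eqref{fractder} applies with this order) evaluated at $g(\cdot):=Q(\cdot)R(u(\cdot))$, namely
\begin{equation*}
D_{s+}^{2\alpha-1}g[r]=\frac{1}{\Gamma(2-2\alpha)}\left(\frac{g(r)}{(r-s)^{2\alpha-1}}+(2\alpha-1)\int_s^r\frac{g(r)-g(q)}{(r-q)^{2\alpha}}dq\right),
\end{equation*}
and then to bound the pointwise term and the singular integral separately. Before that I would record that this expression makes sense in the relevant Hilbert--Schmidt space: for each $\tau\in[s,r]$ one has $Q(\tau)\in L(V)$ and $R(u(\tau))\in L_2(\hat V,V_\delta)\subset L_2(\hat V,V)$, so by the operator inequality recalled in Section~\ref{preli} together with $|\cdot|_V\le c|\cdot|_{V_\delta}$ the product $g(\tau)$ lies in $L_2(\hat V,V)$ with $\|g(\tau)\|\le c\|Q(\tau)\|_{L(V)}\|R(u(\tau))\|_{L_2(\hat V,V_\delta)}$. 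In particular this immediately gives the first summand $c\,c_Q^\prime c_R(r-s)^{-(2\alpha-1)}$.

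The heart of the argument is the estimate of the increment $g(r)-g(q)$, and the key step is the splitting
\begin{equation*}
g(r)-g(q)=\bigl(Q(r)-Q(q)\bigr)R(u(r))+Q(q)\bigl(R(u(r))-R(u(q))\bigr),
\end{equation*}
which isolates the two distinct Hölder exponents. I would bound the first summand by $c\,c_Q c_R(r-q)^{\beta_1}$, using the Hölder hypothesis on $Q$ and the uniform bound $c_R$ on $R$; and the second summand by $c\,c_Q^\prime c_{DR}\|u\|_\beta(r-q)^{\beta}$, using $\sup\|Q\|_{L(V)}\le c_Q^\prime$, the mean value theorem for $R$ (with $\|DR\|\le c_{DR}$), and $|u(r)-u(q)|_V\le\|u\|_\beta(r-q)^\beta$.

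Finally I would insert these two estimates into $\int_s^r(r-q)^{-2\alpha}\|g(r)-g(q)\|dq$ and invoke Lemma~\ref{l1-m} with $\nu=0$: the hypotheses $-2\alpha+\beta_1>-1$ and $-2\alpha+\beta>-1$ are exactly what guarantee that $\int_s^r(r-q)^{-2\alpha+\beta_1}dq=c\,B^{-2\alpha+\beta_1}(s,r)$ and $\int_s^r(r-q)^{-2\alpha+\beta}dq=c\,B^{-2\alpha+\beta}(s,r)$ are finite, and adding the pointwise contribution yields the asserted inequality (the constant $c$ absorbing the $\Gamma$-factors, the embedding constant and the numerical factors). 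I do not expect a genuine obstacle here: the proof is a straightforward combination of the triangle inequality, the mean value theorem and Lemma~\ref{l1-m}; the only two places asking for a little care are distributing the Hölder exponents correctly in the increment splitting (otherwise one would be forced to use the worse exponent $\min(\beta,\beta_1)$ everywhere) and checking that the two integrability conditions in the statement are precisely what renders the Beta-type integrals convergent.
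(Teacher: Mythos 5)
Your proof is correct and is exactly the straightforward argument the paper has in mind (the authors omit the proof, remarking only that it "follows straightforwardly"): write out the Weyl derivative of order $2\alpha-1$, bound the pointwise term by $c\,c_Q^\prime c_R(r-s)^{1-2\alpha}$, split the increment as $(Q(r)-Q(q))R(u(r))+Q(q)(R(u(r))-R(u(q)))$, and integrate using Lemma \ref{l1-m} under the two convergence conditions $-2\alpha+\beta_1>-1$ and $-2\alpha+\beta>-1$. Nothing is missing.
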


\begin{lemma}
\label{l3-m} Suppose that $2\beta>\alpha$. Let $Q(\cdot)$ be given in Lemma %
\ref{l2-m} such that $\beta_1> \alpha$ and let $R$ be
mapping from $V $ to $L_2(\hat V,V_\delta)$ such that
\begin{equation*}
\|R(x)-R(y)-DR(y)(x-y)\|_{L_2(\hat V,V_\delta)}\le c_{D^2 R}|x-y|^2 \qquad \text{for
} x,\,y\in V.
\end{equation*}
Then,  for $0\le s<r\le T$ and $u\in C_{\beta}([0,T];V)$ we have that
\begin{align*}
|\hat D_{s+}^\alpha&(Q(\cdot)R(u(\cdot)))[r]|\le c\bigg(%
\frac{c_Q^\prime\sup_{p\in [0,T]}\|R(u(p))\|_{L_2(\hat V,V_\delta)}}{(r-s)^\alpha} \\
&+c_QB^{-\alpha-1+\beta_1}(s,r)\sup_{p\in[0,T]}\|R(u(p))\|_{L_2(\hat V,V_\delta)}+c_Q^%
\prime c_{D^2R} B^{-\alpha-1+2\beta}(s,r)\|u\|_{\beta}^2\bigg).
\end{align*}
\end{lemma}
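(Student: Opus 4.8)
The plan is to expand $\hat D^\alpha_{s+}(Q(\cdot)R(u(\cdot)))[r]$ directly from the definition \eqref{compensated} of the compensated fractional derivative, which in the present product situation reads
\begin{equation*}
\hat D^\alpha_{s+}(Q(\cdot)R(u(\cdot)))[r]=\frac{1}{\Gamma(1-\alpha)}\bigg(\frac{Q(r)R(u(r))}{(r-s)^\alpha}+\alpha\int_s^r\frac{Q(r)R(u(r))-Q(q)R(u(q))-Q(q)DR(u(q))(u(r)-u(q))}{(r-q)^{\alpha+1}}\,dq\bigg),
\end{equation*}
and then to bound the boundary term and the integral separately; this will simultaneously show that the integral converges, i.e.\ that the compensated derivative is well defined. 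The boundary term is immediate: $\|Q(r)R(u(r))\|\le\|Q(r)\|_{L(V)}\|R(u(r))\|_{L_2(\hat V,V_\delta)}\le c_Q^\prime\sup_{p\in[0,T]}\|R(u(p))\|_{L_2(\hat V,V_\delta)}$, which gives the first term on the right hand side.

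For the integrand, the key step is the algebraic splitting
\begin{equation*}
Q(r)R(u(r))-Q(q)R(u(q))-Q(q)DR(u(q))(u(r)-u(q))=(Q(r)-Q(q))R(u(r))+Q(q)\bigl(R(u(r))-R(u(q))-DR(u(q))(u(r)-u(q))\bigr).
\end{equation*}
First I would bound the first summand using the H\"older estimate on $Q$ and the uniform bound on $R(u(\cdot))$, obtaining $\|(Q(r)-Q(q))R(u(r))\|\le c_Q(r-q)^{\beta_1}\sup_{p\in[0,T]}\|R(u(p))\|_{L_2(\hat V,V_\delta)}$; the hypothesis $\beta_1>\alpha$ is exactly what makes $q\mapsto(r-q)^{\beta_1-\alpha-1}$ integrable on $(s,r)$, and Lemma \ref{l1-m} rewrites the resulting elementary integral as $B^{-\alpha-1+\beta_1}(s,r)$. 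Next I would bound the second summand using the bound on $Q$ and the second-order Taylor-type hypothesis on $R$, obtaining $\|Q(q)(R(u(r))-R(u(q))-DR(u(q))(u(r)-u(q)))\|\le c_Q^\prime c_{D^2R}|u(r)-u(q)|^2\le c_Q^\prime c_{D^2R}\|u\|_\beta^2(r-q)^{2\beta}$; here the hypothesis $2\beta>\alpha$ makes $q\mapsto(r-q)^{2\beta-\alpha-1}$ integrable on $(s,r)$, and Lemma \ref{l1-m} rewrites the integral as $B^{-\alpha-1+2\beta}(s,r)$. Adding the three contributions and absorbing $1/\Gamma(1-\alpha)$ and $\alpha$ into the generic constant $c$ yields the stated estimate.

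I do not anticipate a genuine obstacle: the statement is tailored so that its two integrability thresholds $\beta_1>\alpha$ and $2\beta>\alpha$ coincide precisely with its two hypotheses. The only points requiring a little care are keeping track of which Hilbert--Schmidt space each term lives in, in particular that composing $Q(q)\in L(V)$ with an element of $L_2(\hat V,V_\delta)$ gives an element of $L_2(\hat V,V)$ whose norm is controlled via the continuous embedding $V_\delta\hookrightarrow V$ (whose constant is absorbed into $c$), and correctly applying the modified Beta--function identities of Lemma \ref{l1-m}. The argument runs parallel to that of Lemma \ref{l2-m} and to the finite-dimensional computation in \cite{HuNu09}.
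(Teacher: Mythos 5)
Your proof is correct. The paper omits the proof of this lemma as straightforward, and your argument --- bounding the boundary term by $c_Q^\prime\sup_p\|R(u(p))\|_{L_2(\hat V,V_\delta)}(r-s)^{-\alpha}$, splitting the integrand as $(Q(r)-Q(q))R(u(r))+Q(q)\bigl(R(u(r))-R(u(q))-DR(u(q))(u(r)-u(q))\bigr)$, and invoking the hypotheses $\beta_1>\alpha$ and $2\beta>\alpha$ together with Lemma \ref{l1-m} to produce the two modified Beta--function terms --- is exactly the intended computation, with the bookkeeping of the Hilbert--Schmidt spaces handled appropriately.
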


\begin{remark}
We notice that, in Lemma \ref{l2-m} and Lemma \ref{l3-m},
$c_Q$ and $c_Q^\prime$ denote expressions related with the
operator $Q$ but have not to be necessarily constants, they can depend on parameters.

We also want to point out that, in the two previous lemmata, we could have considered $Q$ such that $r\in [0,T] \mapsto Q(r)\in L(V_\delta,V)$. However, this assumption would give us no significant improvements to the estimates in Lemma \ref{ex1}, thus for the sake of easier presentation, we have assumed $r\in [0,T] \mapsto Q(r)\in L(V)$.

\end{remark}
Next, we prove the Lemma \ref{ex1}.

\begin{proof}
We start assuming that $u_0\in V_\delta$ for $1\geq \delta \geq \beta$.

By $\eqref{eq1}$ and $\eqref{eq2}$, for $0< s<t \leq T$ we have that
\begin{align*}
|(S(t)-S(s))u_0| \leq C s^{\delta-\beta}(t-s)^\beta |u_0|_{V_\delta},
\end{align*}
and then $\|S(\cdot)u_0\|_{\beta} \le C|u_0|_{V_\delta}T^{\delta-\beta}$.

Now, in order to complete the proof of
\eqref{ex4m}, we are going to estimate the $V$-norm
of the following expression:
\begin{align*}
& \int_0^tS(t-r)G(u(r))d\omega(r)
-\int_0^sS(s-r)G(u(r))d\omega(r)
\\
=& \int_s^t S(t-r)G(u(r)) d\omega(r)+ \int_0^s
(S(t-r)-S(s-r))G(u(r)) d\omega(r) \\
=:&A_{11}(s,t)+A_{12}(s,t)+A_{21}(0,s)+A_{22}(0,s)
\end{align*}
where $0< s<t \leq T$, $U=(u,v)\in W(0,T)$ and
\begin{align*}
A_{11}(s,t)&=(-1)^\alpha\int_s^t \hat D_{s+}^\alpha (
S(t-\cdot)G(u(\cdot)))[r]D_{t-}^{1-\alpha}\omega_{t-}[r]dr, \\
A_{12}(s,t)&=-(-1)^{2\alpha-1}\int_s^t D_{s+}^{2\alpha-1}(
S(t-\cdot)DG(u(\cdot)))[r] D_{t-}^{1-\alpha}\mathcal{D}
_{t-}^{1-\alpha}v[r]dr, \\
A_{21}(0,s)&=(-1)^\alpha\int_0^s \hat D_{0+}^\alpha
((S(t-\cdot)-S(s-\cdot))G(u(\cdot)))[r]D_{s-}^{1-\alpha}\omega_{s-}[r]dr, \\
A_{22}(0,s)&=-(-1)^{2\alpha-1}\int_0^s
D_{0+}^{2\alpha-1}((S(t-\cdot)-S(s-\cdot))DG(u(\cdot)))[r]
D_{s-}^{1-\alpha}\mathcal{D} _{s-}^{1-\alpha}v[r]dr.
\end{align*}
To do that, we use Lemma
\ref{l1-m} together with Lemma \ref{l2-m} or Lemma \ref{l3-m},
depending on  whether in the integrand  the fractional
derivative or the compensated fractional derivative appear. As examples, let us show here how to estimate $A_{12}$ and
$A_{21}$ (for the rest of cases, the reader could look at the
different values of parameters which are on the table \ref{table}
below).

For the term $A_{12}$ we make the following
identification in Lemma \ref{l2-m}:
\begin{align*}
&\hat V=V\otimes V,\; R=DG, \;c_{R}=c_{DG},\; \;c_{DR}=c_{D^2G},\;
\beta_1=\beta, \; Q(\cdot)= S(t-\cdot),\; c_Q=(t-r)^{-\beta} ,\;
c_Q^\prime=c.
\end{align*}
Therefore, using \eqref{q3} and applying then Lemma \ref{l1-m},
with $a=s$ and $b=t$, we have that
\begin{align*}
    |A_{12}(s,t)|\le&
    c\int_s^t\left(\frac{1}{(r-s)^{2\alpha-1}}+(t-r)^{-\beta}B^{-2\alpha+\beta}(s,r)\right.\\
    \qquad & \left.+B^{-2\alpha+\beta}(s,r)|||U|||\right)
    |||U||| (t-r)^{2\alpha+\beta+\beta^\prime-2}dr,
\end{align*}
 and hence
\begin{align*}
&|A_{12}(s,t)| \leq C (t-s)^{\beta^\prime+
\beta}(1+(t-s)^{\beta}|||U|||^2).
\end{align*}
Let us take $\alpha^\prime>\alpha$ such that $\alpha^\prime +\beta < \alpha +\beta^\prime$. By the second inequality of Lemma \ref{l0}, for the term $A_{21}$ we make the following
identification in Lemma \ref{l3-m}:
\begin{align*}
&\hat V=V,\;  R=G, \;c_{D^2R}=c_{D^2G},\; \beta_1=\alpha^\prime,\; \sup_{p\in
[0,T]}\|R(u(p))\|_{L_2(V,V_\delta)} \leq c_G, \\
& Q(\cdot)=(S(t-\cdot)-S(s-\cdot)),\; c_Q=\frac{(t-s)^{\beta}}{%
(s-r)^{\alpha^\prime+\beta}} ,\; c_Q^\prime=\frac{(t-s)^{\beta}}{%
(s-r)^{\beta}}.
\end{align*}
Therefore, using \eqref{q1} and applying Lemma \ref{l1-m}
with $a=0$ and $b=s$, we have
\begin{align*}
    |A_{21}(0,s)|&\le c\int_0^s\left(\frac{(t-s)^{\beta}}{(s-r)^{\beta}r^\alpha}
    +\frac{(t-s)^{\beta}B^{\alpha^\prime-\alpha-1}(0,r)}
    {(s-r)^{\alpha^\prime+\beta}}\right.\\
   \qquad & \left.+\frac{(t-s)^{\beta}B^{-\alpha-1+2\beta}(0,r)|||U|||^2}{(s-r)^{\beta}}\right)(s-r)^{\alpha+\beta^\prime-1}dr.
\end{align*}
Hence,
\begin{align*}
&|A_{21}(0,s)| \leq C (t-s)^\beta (s^{\beta^\prime-\beta} +s^{\beta+\beta^\prime}|||U||| ^2)=C (t-s)^\beta s^{\beta^\prime-\beta}(1 +s^{2\beta}|||U||| ^2),
\end{align*}
and thus
\begin{align*}
\sup_{0\leq s<t \leq T} \frac{|A_{21}(0,s)|}{(t-s)^\beta} \leq C T^{\beta^\prime-\beta}(1 +T^{2\beta}|||U||| ^2).
\end{align*}
Consequently, the previous estimates imply
\begin{equation*}
\bigg \|\int_0^\cdot S(\cdot-r) G(u(r))d\omega
\bigg\|_{\beta} \leq C T^{\beta^\prime-\beta}(1+T^{2\beta}|||U|||^2),
\end{equation*}
and thus \eqref{ex4m} is proved.

\begin{table}[h]
\vspace{0.4cm}
\begin{tabular}{|l|l|l|l|l|l|l|l|l|l|}
\hline & $a$ & $b$ & $\beta _{1}$ & $c_{Q}$ & $c_{Q}^{\prime }$ &
sup $R$ & $c_{R}$ & $c_{DR}$ \\ \hline
$A_{11}$ & $s$ & $t$ & $\beta  $ & $(t-r)^{-\beta}$ & $%
{c}$ & $c_{G}$ &  &    \\
\hline
$A_{12}$ & $s$ & $t$ & $\beta $ & $(t-r)^{-\beta  }$ & $%
c$ &  & $c_{DG}$ & $c_{D^{2}G}$\\ \hline
$A_{21}$ & $0$ & $s$ & $\alpha^\prime $ & $\frac{(t-s)^{\beta }}{%
(s-r)^{\alpha^\prime+\beta  }}$ & $\frac{(t-s)^{\beta
}}{(s-r)^{\beta }}$ & $c_{G}$ &  &    \\
\hline
$A_{22}$ & $0$ & $s$ & $\alpha^\prime $ & $\frac{(t-s)^{\beta }}{%
(s-r)^{\alpha^\prime+\beta  }}$ & $\frac{(t-s)^{\beta
}}{(s-r)^{\beta }}$ &  & $c_{DG}$ & $c_{D^{2}G}$  \\   \hline
\end{tabular}
\begin{center}\caption{Values of the different parameters appearing in the estimates of $A_{ij}$, for $i,j=1,2$.} \label{table}
\end{center}
\end{table}

(ii) We obtain clearly that $|S(T) u_0|_{V_\delta}\leq
|u_0|_{V_\delta}$. It is interesting to emphasize here that the constant in the previous estimate
is just 1, which is of importance in the proof of
Theorem \ref{tex2}.

Moreover, since the semigroup and the operator $A$ commute, we can write
\begin{align*}
(-A)^\delta&\int_0^tS(t-r)G(u(r))d\omega(r)=\int_0^t S(t-r)(-A)^\delta G(u(r)) d\omega(r).
\end{align*}
In this point, since $G$ takes values in
$L_2(V,V_\delta)$ we have that $(-A)^\delta G(\cdot)\in L_2(V)$, and $\|G(u(\cdot))\|_{L_2(V,V_\delta)}=\|(-A)^\delta G(u(\cdot))\|_{L_2(V)}$. Therefore, we can use the properties of Lemma \ref{l2} and the calculations in part (i) of this proof to conclude (ii).
\end{proof}

In the following two technical lemmata we collect some properties that are needed for the global existence of a solution of \eqref{equ1}-\eqref{equ2}. {In particular, the first result emphasizes that the Chen equality is preserved under the mild form of the solution.

\begin{lemma}\label{l8}
Let $U=(u,v)\in W(0,1)$ be a solution of \eqref{equ1}-\eqref{equ2} for a smooth $\omega$, then $(u\otimes \omega)$ satisfies the Chen equality.
\end{lemma}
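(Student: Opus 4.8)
The plan is to identify the area component of the solution, for a smooth $\omega$, with the classical two-form $(u\otimes\omega)$ of \eqref{eq11}, for which the Chen equality is immediate. First I would recall that, since $\omega$ is smooth, Lemma \ref{l6} gives the unique mild solution $u\in C_\beta([0,1];V)$ of \eqref{eq0} (with $F=0$), and that this $u$ satisfies the rewritten identity \eqref{sol}, which is precisely the first equation \eqref{equ1} of our system. Next, set $v(s,t):=(u\otimes\omega)(s,t)=\int_s^t(u(\tau)-u(s))\otimes_V\omega'(\tau)\,d\tau$ as in \eqref{eq11}. By the Remark following \eqref{chen}, this $v$ belongs to $C_{\beta+\beta'}(\Delta_{0,1};V\otimes V)$ and already obeys the Chen equality \eqref{chenbis}, so that $(u,v)\in W(0,1)$.

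The second step is to check that this particular pair $(u,v)$ is a solution of the system \eqref{equ1}--\eqref{equ2}. Equation \eqref{equ1} is just \eqref{sol} with $F=0$. For \eqref{equ2}, I would invoke the Lemma of Section \ref{regular} that yields \eqref{uomega}: it is proved there that $(u\otimes\omega)$, defined through \eqref{eq11}, satisfies \eqref{uomega}, which for $F=0$ is exactly \eqref{equ2}, the auxiliary tensor $(u\otimes(\omega\otimes_S\omega)(t))$ being the one given by \eqref{eq131} --- and, for smooth $\omega$, this coincides with the expression \eqref{eq131bis} used in Definition \ref{deff}, by \eqref{chenomegaS}, \eqref{3tensorChen} and the fractional integration-by-parts formula \eqref{eq10b}, exactly as in the proof of that Lemma. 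Hence $\bigl(u,(u\otimes\omega)\bigr)$ solves \eqref{equ1}--\eqref{equ2}.

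Finally, for a smooth $\omega$ Hypothesis \textbf{H} is in force (a smooth path is $\beta'$-H\"older for every $\beta'<1$ and, by the estimates of Section \ref{regular}, $(\omega\otimes_S\omega)\in C_{2\beta'}$), so Theorem \ref{tex1} applies: the solution of \eqref{equ1}--\eqref{equ2} in $W(0,1)$ with datum $u_0=u(0)$ is unique. Consequently the area component $v$ of any solution $U=(u,v)$ equals $(u\otimes\omega)$, and therefore satisfies the Chen equality. I expect the only genuinely delicate point to be the bookkeeping in the second step, namely verifying that the implicitly defined tensor $(u\otimes(\omega\otimes_S\omega))$ entering \eqref{equ2}/\eqref{eq131bis} is, for smooth $\omega$, literally the object \eqref{eq131} used in deriving \eqref{uomega}. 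If one prefers to avoid any appeal to Theorem \ref{tex1} (to rule out circularity with the approximation argument used in its proof), the last step can instead be replaced by a direct verification of \eqref{chenbis} for \eqref{uomega}, carried out through the Chen identities \eqref{chenomegaS}, \eqref{3tensorChen} and \eqref{chen3fold} established in Section \ref{regular}; no new estimate is required either way.
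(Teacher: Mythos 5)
Your reading of the structure is essentially right, and the one danger you flag is real: the appeal to Theorem \ref{tex1} is genuinely circular. The uniqueness proof goes through Lemma \ref{l81} and Theorem \ref{rem10}, whose approximation and concatenation arguments require the smooth-noise pairs $(u_n,u_n\otimes\omega^n)$ to be admissible elements of $W$, i.e.\ to satisfy \eqref{chenbis} --- which is exactly the content of Lemma \ref{l8}. So your first route cannot be used, and the whole burden falls on your fallback. That fallback is indeed what the paper does, but by a different computation than the one you sketch: the identities \eqref{chenomegaS}, \eqref{3tensorChen} and \eqref{chen3fold} are Chen relations for the auxiliary tensors $\omega\otimes_S\omega$ and $u\otimes(\omega\otimes_S\omega)$, and they are not what is invoked here. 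Instead the paper writes $u(\xi)-u(s)=S(\xi-s)u(s)-u(s)+\int_s^\xi S(\xi-r)G(u(r))\,d\omega(r)$ (the variation-of-constants form of \eqref{equ1}), substitutes this into the two-form $\int_s^t(u(\xi)-u(s))\otimes_V\omega^\prime(\xi)\,d\xi$ on each of the intervals $[s,\tau]$, $[\tau,t]$, $[s,t]$, and checks \eqref{chenbis} directly; the only nontrivial input is the semigroup identity $S(\xi-\tau)S(\tau-s)=S(\xi-s)$ (and its action on the inner integral), which makes the mismatch between the three expressions collapse to exactly the rectangular increment $(u(\tau)-u(s))\otimes_V(\omega(t)-\omega(\tau))$. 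Your steps one and two --- that $(u,(u\otimes\omega))$ with the explicit tensor \eqref{eq11} solves \eqref{equ1}--\eqref{equ2} via \eqref{sol} and \eqref{uomega}, and that the explicit tensor trivially satisfies Chen --- are correct and are the content of Section \ref{regular}; to turn your fallback into a complete proof you should replace the citation of the auxiliary Chen identities by this short semigroup computation.
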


\begin{proof}
A straightforward calculation  shows
\begin{equation*}
    u(\xi)-u(s)=\int_s^\xi S(\xi-r)G(u(r))d\omega(r)+S(\xi-s)u(s)-u(s),
\end{equation*}
and therefore
\begin{align*}
    (u\otimes\omega)(s,\tau)=&\int_s^\tau\bigg(\int_s^\xi S(\xi-r)G(u(r))d\omega(r)+S(\xi-s)u(s)-u(s)\bigg)\otimes_Vd\omega(\xi)\\
   (u\otimes \omega)(\tau,t)=&\int_\tau^t\bigg(\int_\tau^\xi S(\xi-r)G(u(r))d\omega(r)+S(\xi-\tau)u(\tau)-u(\tau)\bigg)\otimes_Vd\omega(\xi)\\
   (u\otimes\omega)(s,t)=&\int_s^t\bigg(\int_s^\xi S(\xi-r)G(u(\tau))d\omega(r)+S(\xi-s)u(s)-u(s)\bigg)\otimes_Vd\omega(\xi)\\
\end{align*}
We note that for $\xi\in (\tau,t)$, we have
\begin{align*}
S(\xi-\tau)u(\tau)=S(\xi-\tau)S(\tau-s) u(s)  + \int_s^\tau
S(\xi-\tau)S(\tau-r)G(u(r))d\omega(r),
\end{align*}
and therefore
\begin{align}\label{chen2}
\begin{split}
&\int_s^\tau S(\xi-s)u(s)\otimes_V d\omega(\xi)
+\int_\tau^t S(\xi-\tau)u(\tau)\otimes_V d\omega(\xi)
\\=&\int_s^t S(\xi-s) u(s)\otimes_V d\omega(\xi)+\int_\tau^t \int_s^\tau
S(\xi-r)G(u(r))d\omega(r)\otimes_V d\omega(\xi).
\end{split}
\end{align}
Moreover, the rectangular term in the Chen equality can
be written as
\begin{align*}
(u(\tau)-u(s))\otimes_V(\omega(t)-\omega(\tau))&=u(\tau)\otimes_V(\omega(t)-\omega(\tau))-u(s)\otimes_V(\omega(t)-\omega(s))
\\&-u(s)\otimes_V(\omega(s)-\omega(\tau)).
\end{align*}
Thus, combining the previous equality with \eqref{chen2}, we obtain the Chen equality for smooth $\omega$.
\end{proof}


\begin{lemma}\label{l81}
Suppose Hypothesis {\bf H} holds. Let $U=(u,v)\in W(0,1)$ be a solution of \eqref{equ1}-\eqref{equ2} with initial condition $u_0\in V_\delta$, and let $U_n=(u_n,u_n\otimes \omega^n)$ where $u_n$ is a solution of \eqref{eq0} and $u_n\otimes \omega^n$ is given by \eqref{eq11} for a sequence $(\omega^n)_{n\in\NN}$ of smooth trajectories given in Hypothesis {\bf H}, item (4), having also initial condition $u_0$.
Then on $W(0,1)$
\begin{equation*}
    \lim_{n\to\infty}|||U-U_n|||=0.
\end{equation*}
\end{lemma}

\begin{proof}
 The proof is quite similar to  Theorem \ref{tex2}. Because we assume {\em a priori} that there exists  a solution $U\in W(0,1)$ we have that
\begin{equation}\label{cnv}
    \rho_0=2\sup_{[0,1]}|u(t)|_{V_\delta}<\infty.
\end{equation}
Denote the solution of the equation \eqref{2eq} by $R$ for an appropriate $\Delta T_1\le 1$. Since the constant $C$
in this formula can be chosen continuously depending on $\omega$ and $(\omega\otimes_S\omega)$ we have  $|||U_n|||\le 2R$ if $n$ is chosen
sufficiently large. In addition, we choose a $\Delta T$ less than or equal to $\Delta T_1$ such that
\begin{equation*}
    C \Delta T^{\beta^\prime-\beta}(1+\Delta T^{2\beta}4R^2)<\frac12.
\end{equation*}
Using the notation we have introduced in front of Lemma \ref{ex3bis} we have
\begin{align}\label{6.6}
\begin{split}
    |||U_n-U|||&\le|||\tT(U,\omega,(\omega\otimes_S\omega),u_0)-\tT(U_n,\omega,(\omega\otimes_S\omega),u_0)|||\\
    &+
    |||\tT(U_n,\omega,(\omega\otimes_S\omega),u_0)-\tT(U_n,\omega^n,(\omega^n\otimes_S\omega^n),u_0)|||.
    \end{split}
\end{align}
Hence  by Lemmata \ref{ex3} and \ref{ex3bis} we have $\lim_{n\to\infty}|||U_n-U|||=0$ in $W(0,\Delta T\wedge 1)$. If $\Delta T<1$ we consider $[\Delta T,2\Delta T\wedge 1]$. First we show that the restriction of $U$ to
$[\Delta T,  2\Delta T\wedge 1]$, $\Delta_{\Delta T,  2\Delta T\wedge 1}$ with initial condition $u(\Delta T)$ solves
\eqref{equ1}-\eqref{equ2} in $W(\Delta T,2\Delta T\wedge 1)$.
Note that by Lemma \ref{ex3bis}
\begin{equation*}
    U=\lim_{n\to\infty}\tT(U,u_0,\omega^n,\omega^n\otimes_S\omega^n).
\end{equation*}
The limit exists also with respect to $W(\Delta T,2\Delta T\wedge 1)$ when we restrict the elements of the right and left hand side to $[\Delta T,2\Delta T\wedge 1],\Delta_{\Delta T,2\Delta T\wedge 1}$. Then we have the presentation
\begin{align*}
    \tT_1(U,u_0,\omega^n,\omega^n\otimes_S\omega^n)&=S(t)u_0+\int_0^t S(t-r)G(u(r))d\omega^n=S(t-\Delta T)\tT_1(U,u_0,\omega^n,\omega^n\otimes_S\omega^n)(\Delta T)\\
    &+\int_{\Delta T}^tS(t-r)G(u(r))d\omega^n.
\end{align*}
Hence writing this expression in the form of \eqref{equ1} we have for $n\to\infty$ by Lemma \ref{ex3bis}
\begin{equation*}
    u=\tT_1(U,u(\Delta T),\omega,\omega\otimes_S\omega)\quad\text{on } [\Delta T,  2\Delta T\wedge 1].
\end{equation*}
and trivially
\begin{equation*}
    v=\tT_2(U,u_0,\omega,\omega\otimes_S\omega)\quad\text{on } \Delta_{\Delta T,  2\Delta T\wedge 1}.
\end{equation*}
Hence the restriction of $U\in W(\Delta T,2\Delta T\wedge 1)$
satisfies the system \eqref{equ1}-\eqref{equ2} on $[\Delta T,  2\Delta T\wedge 1]$, $\Delta_{\Delta T,  2\Delta T\wedge 1}$ with initial condition $u(\Delta T)$. Now we show the convergence of the approximated solutions $U_n$ to $U$ in $W(\Delta T,2\Delta T\wedge 1)$. First note that because of Lemmata \ref{ex1} and \ref{ex2} we have that the restriction of $U$ belongs to the ball $B_{W(\Delta T,2\Delta T\wedge 1)}(0,R)$, and moreover it is not possible that there are solutions outside this ball. Therefore, by using a similar inequality than (\ref{6.6}), for $n$ sufficiently large we obtain that on $W(\Delta T,2\Delta T\wedge 1)$
\begin{align*}
    |||U_n-U|||&\le|||\tT(U,\omega,(\omega\otimes_S\omega),u_0)-\tT(U_n,\omega,(\omega\otimes_S\omega),u_0)|||\\
    &+
    |||\tT(U_n,\omega,(\omega\otimes_S\omega),u_0)-\tT(U_n,\omega^n,(\omega^n\otimes_S\omega^n),u_0)|||
    \\
    &\leq 1/2   |||U_n-U|||+
    |||\tT(U_n,\omega,(\omega\otimes_S\omega),u_0)-\tT(U_n,\omega^n,(\omega^n\otimes_S\omega^n),u_0)|||,
\end{align*}
where the last inequality is true due to the fact that the restriction of $U$ belongs to the ball $B_{W(\Delta T,2\Delta T\wedge 1)}(0,R)$ and $|u(\Delta T)-u_n(\Delta T)|_{V_\delta}$ is sufficiently small (similar to \eqref{eq30}). Now Lemma \ref{ex3bis} implies that $\lim_{n\to\infty}|||U_n-U|||=0$ in $W(\Delta T,2\Delta T\wedge 1)$. In a similar manner
we obtain the convergence on  $W((i-1)\Delta T,i\Delta T\wedge 1)$ and then by the concatenation argument we obtain the convergence in $W(0,1)$.
\end{proof}

\end{document}